\newcolumntype{C}{>{$}c<{$}}
\newcolumntype{L}{>{$}l<{$}}
\setlist[enumerate]{topsep=0pt,itemsep=-1ex,partopsep=1ex,parsep=1ex}
\setlist[itemize]{topsep=0pt,itemsep=-1ex,partopsep=1ex,parsep=1ex}
\theoremstyle{plain}
\newtheorem{theo}{Theorem}[section]
\newtheorem{lemma}[theo]{Lemma}
\theoremstyle{definition}
\newtheorem{defn}[theo]{Definition}
\newtheorem{set}[theo]{Setting}
\newcommand{\mc}[1]{\mathcal{#1}}
\newcommand{\mb}[1]{\mathbb{#1}}
\newcommand{\mf}[1]{\mathfrak{#1}}
\newcommand{\nib}[1]{\noindent {\bf #1}}
\newcommand{\bracc}[1]{\left\{ #1 \right\}}
\newcommand{\bsize}[1]{\left| #1 \right|}
\newcommand{\brak}[1]{\left[ #1 \right]}
\newcommand{\sgen}[1]{\langle #1 \rangle}
\newcommand{\sub}{\subseteq}
\newcommand{\Lra}{\Leftrightarrow}
\newcommand{\sm}{\setminus}
\newcommand{\ov}{\overline}
\newcommand{\ul}{\underline}
\newcommand{\wt}{\widetilde}
\newcommand{\eps}{\varepsilon}
\newcommand{\es}{\emptyset}
\newcommand{\ova}{\overrightarrow}
\newcommand{\lova}{\overleftarrow}
\newcommand{\aA}{\alpha}
\newcommand{\bB}{\beta}
\newcommand{\dD}{\delta}
\newcommand{\tT}{\theta}
\newcommand{\oO}{\omega}
\newcommand{\sd}{\bigtriangleup}
\newcommand{\DD}{\Delta}
\newcommand{\OO}{\Omega}
\newcommand{\Ss}{\Sigma}
\newcommand{\wK}{\ova{W}^K_{\! 8}}
\newcommand{\wKc}{\ova{W}^K_{\! c}}
\newcommand{\wL}{\ova{W}_{\! \ell^*}}
\newcommand{\wC}{\ova{W}_{\! c}}
\title{The generalised Oberwolfach problem}
\author{Peter Keevash\thanks{Mathematical Institute,
University of Oxford, Oxford, UK. Email: keevash@maths.ox.ac.uk.}
\and Katherine Staden\thanks{Mathematical Institute,
University of Oxford, Oxford, UK. Email: staden@maths.ox.ac.uk.
\newline \hspace*{1.8em}Research supported
in part by ERC Consolidator Grant 647678.}}
\begin{document}

\maketitle

\begin{abstract}
We prove that any quasirandom dense large graph
in which all degrees are equal and even can be 
decomposed into any given collection of two-factors
($2$-regular spanning subgraphs).
A special case of this result gives a new solution to 
the Oberwolfach problem.
\end{abstract}

\section{Introduction}

At meals in the Oberwolfach Mathematical Institute,
the participants are seated at circular tables.
At an Oberwolfach meeting in 1967, Ringel (see \cite{LR})
asked whether there must exist a sequence of seating plans
so that every pair of participants 
sit next to each other exactly once.
We assume, of course, that there are an odd number of participants,
as each participant sits next to two others in each meal.
The tables may have various sizes,
which we assume are the same at each meal.

\medskip

\nib{Oberwolfach Problem (Ringel).}
Let $F$ be any two-factor (i.e.\ $2$-regular graph) 
on $n$ vertices, where $n$ is odd.
Can the complete graph $K_n$ be
decomposed into copies of $F$?

\medskip

We obtain a new solution of this problem for large $n$, 
with a theorem that is more general in three respects:
(a) we can decompose any dense quasirandom graph
that is regular of even degree
(not just $K_n$ for $n$ odd),
(b) we can decompose into any prescribed 
collection of two-factors
(not just copies of some fixed two-factor $F$),
(c) our theorem applies to directed graphs (digraphs).

We start by stating our result for undirected graphs.
We require the following quasirandomness definition.
We say that a graph $G$ on $n$ vertices is
$(\eps,t)$-typical if every set $S$ of at most $t$ vertices 
has $((1  \pm \eps)d(G))^{|S|} n$ common neighbours, where 
$d(G) = e(G) \tbinom{n}{2}^{-1}$ is the density of $G$.

\begin{theo}\label{mainundir}
For all $\aA>0$ there exist $t,\eps,n_0$ such that 
any $(\eps,t)$-typical graph on $n \ge n_0$ vertices
that is $2r$-regular for some integer $r > \aA n$
can be decomposed into any family of $r$ two-factors.
\end{theo}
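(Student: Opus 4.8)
Note first that the hypotheses are consistent with what must be produced: a $2r$-regular graph has exactly $rn$ edges, the total size of $r$ two-factors, and $2r$-regularity is exactly the number of edges meeting each vertex across $r$ two-factors --- so there is no counting obstruction and the whole content is existence. The plan is the ``nibble plus absorber'' strategy, adapted to the fact that the targets $F_1,\dots,F_r$ are \emph{spanning} $2$-regular graphs rather than bounded configurations. Let me record what typicality buys: with $d=d(G)=2r/(n-1)>2\aA$, the $(\eps,t)$-property controls all joint neighbourhoods of size up to $t$, hence the count of copies of any fixed bounded graph and all edge counts between vertex sets; in particular $G$ is dense and behaves like a robust expander. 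It is convenient, and in line with item~(c), to pass to a digraph: take an Eulerian orientation of $G$ to obtain a digraph $D$ in which all in- and out-degrees equal $r$, orient each cycle of each $F_i$ cyclically to obtain an oriented two-factor $\ova{F}_i$ (a spanning union of directed cycles), and observe that an edge-disjoint decomposition $D=\ova{F}_1\cup\dots\cup\ova{F}_r$ projects onto the undirected decomposition we want; the relevant pseudorandomness of $D$ is inherited from $G$ (one could also argue directly with $G$). So it suffices to decompose $D$.

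\textbf{Reserving an absorber.} Split $[r]=\mc{I}\cup\mc{J}$ with $|\mc{I}|=\mu r$ for a small constant $\mu>0$, and delete from $D$ a pre-reserved sub-digraph $A$ that is almost $|\mc{I}|$-regular (all in/out-degrees within $o(n)$ of $|\mc{I}|$), so designed that at the very end the two-factors $\{\ova{F}_i:i\in\mc{I}\}$ can be assembled from $A$ together with the sparse leftover of the main phase. The target lemma is a \emph{robust decomposability} statement for $A$: for every sufficiently sparse, quasirandomly spread digraph $L$ that is edge-disjoint from $A$ and for which $A\cup L$ is exactly $|\mc{I}|$-regular, the digraph $A\cup L$ decomposes into $\{\ova{F}_i:i\in\mc{I}\}$. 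I would build $A$ out of many almost-disjoint constant-size ``switching'' gadgets joined by a bounded-length connector layer: gadgets of one kind reroute a short directed path and are used to repair the short cycles of the $\ova{F}_i$ (closest to a removal-process / design-type absorber), gadgets of another kind are of rotation--extension type and repair long directed cycles (as for absorbers for directed Hamilton cycles), and the connectors allow any single local defect of $L$ to be shuttled to a gadget able to handle it.

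\textbf{Approximate decomposition.} Write $D'=D-A$; it is almost $|\mc{J}|$-regular and still typical. The aim is to decompose $D'$ edge-disjointly into $|\mc{J}|$ spanning sub-digraphs, one isomorphic to each $\ova{F}_j$ with $j\in\mc{J}$, plus an uncovered leftover $L$ which by construction complements $A$ to an exactly $|\mc{I}|$-regular digraph and is sparse and quasirandom. Because each $\ova{F}_j$ is spanning this is the technical core and is not a single clean nibble. The route I would take: split each $\ova{F}_j$ along its cycle-length profile into a bounded number of ``linear'' pieces; build the short-cycle parts of all $j\in\mc{J}$ \emph{simultaneously} by a R\"odl-nibble / semi-random process, using martingale concentration to keep the residual digraph typical at each step and the accumulating leftover within the admissible class; build the long-cycle parts by a within-phase absorption of directed-Hamilton-cycle type, keeping a small untouched vertex reservoir so that the $O(1)$ pieces per $\ova{F}_j$ that cannot be closed greedily can be closed --- at exactly the prescribed lengths --- at the end of the phase. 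Finally, apply the robust decomposability of $A$ to $A\cup L$ to obtain $\{\ova{F}_i:i\in\mc{I}\}$, completing the decomposition of $D$ and hence of $G$.

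\textbf{Main obstacle.} I expect the real difficulty to be the coexistence, inside a single arbitrary two-factor, of two incompatible regimes: the short cycles behave like a dense packing / design problem, where leftovers are handled via nibble and hypergraph-matching technology and absorbers of removal-process type, whereas the long cycles behave like a Hamiltonicity problem, where the tools are rotation--extension, robust expansion and connecting absorbers. One must run the approximate decomposition so that \emph{both} kinds of structure are built in parallel without the leftover drifting outside the admissible class (in particular, realising exactly the prescribed cycle lengths rather than merely ``a spanning union of cycles''), and --- harder still --- one must design a single absorber $A$ and prove its robust decomposability so that it swallows the leftover produced by both regimes at once. Making the two frameworks cooperate inside one argument, rather than proving two separate theorems, is where the genuinely new work lies.
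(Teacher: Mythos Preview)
Your proposal is an outline, not a proof: the two load-bearing steps are both left as targets rather than established. The ``robust decomposability'' lemma for the absorber $A$ is simply asserted, and you yourself flag designing an absorber that simultaneously handles short-cycle (design-type) and long-cycle (Hamiltonicity-type) defects as ``where the genuinely new work lies''. Likewise the approximate decomposition step is a sketch (nibble for short cycles, rotation--extension for long ones) with no mechanism explained for hitting the \emph{exact} prescribed cycle lengths in every factor while keeping the leftover in the admissible class. A minor technical slip: an arbitrary Eulerian orientation of $G$ need not be typical as a digraph; the paper instead orients edges independently at random and then repairs the degrees, which does preserve typicality.

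The paper's route is genuinely different from your nibble-plus-gadget-absorber plan. After the same reduction to the directed problem, it does not build local absorbing gadgets at all. Instead it encodes the decomposition problem as a decomposition of an auxiliary two-coloured digraph $J$ on $V\cup W$ (with $|W|=r$) into coloured directed wheels with hub in $W$, and then invokes the general coloured/directed design machinery of Keevash to obtain the wheel decomposition exactly. Short cycles of length $c$ correspond to ordinary $c$-wheels; the key new idea for long cycles is a \emph{twisting} encoding: an arc $\ova{xy}$ of special colour $K$ in $J$ stands for the edge $xy^+$ of $G$ in a fixed cyclic order on $V$, so that a special $8$-wheel encodes an $8$-path, and a compatible family of such wheels concatenates into a long path with prescribed endpoints. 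The short/long tension you identify is dissolved by a preliminary case split (either all factors are ``long'', or all share a common short cycle length $\ell^*$ occurring $\Omega(n)$ times), so no single absorber ever has to serve both regimes. This buys the full generality of the theorem (any dense typical regular graph, arbitrary collection of two-factors, and a directed version); the absorber-style approach was carried out in parallel work of Glock, Joos, Kim, K\"uhn and Osthus, but only for almost-complete host graphs and with one fixed two-factor repeated $\Omega(n)$ times.
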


Theorem \ref{mainundir} implies some variant forms of the
Oberwolfach problem that have appeared in the literature,
such as the Hamilton--Waterloo Problem (two types of two-factors),
or that if $n$ is even then $K_n$ can be decomposed into a
perfect matching and any specified collection 
of $n/2-1$ two-factors. More generally, 
with parameters as in Theorem \ref{mainundir},
it is easy to deduce that
any $(\eps,t)$-typical graph on $n \ge n_0$ vertices
that is $(2r+1)$-regular for some integer $r > \aA n$
can be decomposed into a perfect matching
and any family of $r$ two-factors.

We will deduce Theorem~\ref{mainundir}
from the directed version below.
First we extend our definitions to digraphs.
We say that a digraph $G$ on $n$ vertices is
$(\eps,t)$-typical if for every set $S=S^- \cup S^+$ 
of at most $t$ vertices 
there are $((1  \pm \eps)d(G))^{|S|} n$ vertices which
are both common inneighbours of $S^-$
and outneighbours of $S^+$, where 
$d(G) = e(G) \tbinom{n}{2}^{-1}$ is the density of $G$.
We say that $G$ is $r$-regular if 
$d^+_G(v)=d^-_G(v)=r$ for all $v \in V(G)$.
A \emph{one-factor} is a $1$-regular digraph;
equivalently, it is a union of 
vertex-disjoint oriented cycles.

\begin{theo}\label{main}
For all $\aA>0$ there exist $t,\eps,n_0$ such that 
any $(\eps,t)$-typical digraph on $n \ge n_0$ vertices
that is $r$-regular for some integer $r > \aA n$
can be decomposed into any family of $r$ one-factors.
\end{theo}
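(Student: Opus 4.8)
The plan is to prove Theorem~\ref{main} by combining an \emph{approximate decomposition} of $G$ into copies of most of the target one-factors, obtained by a semi-random (nibble) argument, with an \emph{absorption} step that completes a carefully reserved remainder into the few remaining factors. At the outset I would fix a slowly growing threshold $\ell=\ell(n)\to\infty$ and split the target factors $F_1,\dots,F_r$ into \emph{long} ones --- those in which all but a $o(1)$-proportion of the vertices lie on oriented cycles of length at least $\ell$ --- and \emph{short} ones, since these two types will be absorbed by different mechanisms. The overall structure is: reserve a sparse ``absorbing'' subdigraph $A\subseteq G$ that is $d$-regular for some $d$ with $d=o(\aA n)$ but $d\to\infty$, and earmark $s$ of the $F_i$ to be completed by absorption; run the nibble on $G-A$ to embed edge-disjoint copies of the other $r-s$ factors, leaving a sparse leftover digraph $H$; and finally show that $A\cup H$ decomposes into the $s$ earmarked factors. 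Typicality with $t$ large in terms of $1/\aA$ is what makes this feasible, since embedding and rerouting along cycles of the factors forces one to intersect up to roughly $\ell$ neighbourhoods at a time.

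For the approximate decomposition I would run a random greedy process: writing $G_i$ for the current digraph, obtained from $G-A$ by deleting all copies placed so far, at step $i$ I embed a fresh edge-disjoint copy of $F_i$. Embedding a single spanning union of oriented cycles into a typical digraph is an instance of the blow-up / absorbing-embedding toolkit for spanning structures --- the short cycles of $F_i$ are placed by first solving an auxiliary assignment problem that chooses which vertices host them, which quasirandomness makes solvable --- and randomising the embeddings keeps the used subdigraph quasirandom, so that $G_i$ stays typical-like down to density $o(\aA)$. I stop once only an $o(\aA)$-fraction of the edges of $G-A$ survive; a short clean-up then converts the remainder into an exactly $d'$-regular, still suitably quasirandom digraph $H$, where $d+d'=s$, so that $A\cup H$ has precisely $sn$ edges --- the right number for $s$ one-factors.

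The heart of the proof, and the step I expect to be the main obstacle, is the construction of $A$ and the decomposition of $A\cup H$ into the $s$ earmarked factors. For a \emph{long} earmarked factor I would use an absorber modelled on those for Hamilton decompositions of robustly expanding digraphs: $A$ contains a ``skeleton'' made of most of that factor's long cycles together with a flexible gadget --- a bounded number of short paths with many interchangeable endpoints in $V(G)$ --- so that, whatever sparse near-regular $H$ arises, the edges of $H$ can be apportioned among the gadgets and each skeleton then closes up into precisely the prescribed system of long cycles; this uses rotation / switching arguments together with the quasirandomness of $H$ to route the required connecting paths. For the \emph{short} earmarked factors the obstruction is that short oriented cycles cannot be deformed, so instead I would handle their short cycles collectively: the union of all short cycles over all earmarked short factors, together with the short part of $H$, forms a near-regular quasirandom digraph, which I would decompose into the prescribed small-cycle pieces by an iterated-absorption argument in which the occurrences of each small cycle type are slots to be filled, feasibility coming from Hall- / defect-type matchings supplied by quasirandomness. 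The genuinely delicate points are making the long-type and short-type absorbers coexist inside a single reserved set $A$, and checking that deleting $A$ from $G$ does not spoil the typicality the nibble needs; conceptually each sub-mechanism is a known-style argument, so I expect the real work to lie in their combination and in upgrading the known absorption machinery to handle an \emph{arbitrary} prescribed family of factors rather than copies of one fixed factor.
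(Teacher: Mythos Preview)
Your outline is a plausible high-level strategy, but it has a genuine gap at exactly the point you yourself flag as ``the real work'': the absorption step. The known absorber constructions you invoke (rotation/switching for robust expanders, iterated absorption for small cycles) all ultimately rely on having many copies of the \emph{same} target graph to absorb into, so that leftover edges can be routed into any of a large pool of interchangeable slots. Indeed, the Glock--Joos--Kim--K\"uhn--Osthus solution of the Oberwolfach problem follows essentially your template and for precisely this reason requires some fixed two-factor $F$ to occur $\Omega(n)$ times among the targets. Your proposal offers no mechanism for absorbing an arbitrary sparse near-regular leftover $H$ into a prescribed list of $s$ factors that may all be distinct; saying ``upgrade the known machinery'' names the obstacle without addressing it. A second, related issue: your long/short dichotomy uses a growing threshold $\ell(n)\to\infty$, but then embedding the short cycles of each factor requires controlling intersections of up to $\ell(n)$ neighbourhoods, which is not delivered by $(\eps,t)$-typicality for any fixed $t$.

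The paper takes a quite different route that sidesteps absorption entirely. The key idea is a \emph{twisting} encoding: fix a cyclic order on $V$, and encode the problem of decomposing $G$ into prescribed path-factors (with prescribed endpoints lying in short cyclic intervals) as a decomposition of an auxiliary two-coloured digraph $J$ on $V\cup W$ into copies of a single fixed coloured wheel $\wK$ of bounded size. This reduces the exact step to the coloured-directed-design machinery of \cite{K2,K3}, which needs only divisibility, regularity, and extendability of $J$ --- no absorber at all. The reduction to path-factors with the right interval structure is set up by a randomised algorithm (Sections~\ref{sec:alg}--\ref{sec:reg}) that simultaneously splits $G$ into $G_1,G_2$ and constructs $J_1,J_2$ so that an approximate wheel decomposition of $J_2$ (via Kahn's fractional-matching theorem) handles most of $G_2$, and a small greedy patch leaves $G_1'$ in exactly the form to which the exact wheel-decomposition theorem applies. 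The cycle-length threshold $K$ is a fixed constant, so $(\eps,t)$-typicality with bounded $t$ suffices throughout.
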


Theorem~\ref{mainundir} follows from Theorem~\ref{main}
and the observation that for any typical graph
that is regular of even degree
there exists an orientation 
which is a regular typical digraph.
To see this, one can orient edges independently at random
and make a few modifications to obtain the required orientation.
(See Lemma~\ref{typ:split} below for a similar argument.)

While we were preparing this paper,
the Oberwolfach problem (for large $n$) was solved by
Glock, Joos, Kim, K\"uhn and Osthus~\cite{GJKKO}. 
They also obtained a more general result that covers
the other undirected applications just mentioned,
but our result is more general than theirs 
in the three respects mentioned above:
(a) we can decompose any dense typical regular graph
(whereas their result only applies to almost complete graphs),
(b) we can decompose into any collection of two-factors
(whereas they can allow for a collection of two-factors
provided that some fixed $F$ occurs $\OO(n)$ times),
(c) our result also applies to digraphs
(whereas theirs is for undirected graphs).

There is a large literature on the Oberwolfach Problem,
of which we mention just a few highlights
(a more detailed history is given in \cite{GJKKO}). 
The problem was solved for infinitely many $n$
by Bryant and Scharaschkin \cite{BS},
in the case when $F$ consists of two cycles by Traetta \cite{T},
and for cycles of equal length by
Alspach, Schellenberg, Stinson and Wagner \cite{ASSW}.
A related conjecture of Alspach that $K_n$ can be
decomposed into any collection of cycles each of length $\le n$
and total size $\tbinom{n}{2}$ was solved
by Bryant, Horsley and Pettersson \cite{BHP}. 

There are several recent general results on approximate
decompositions that imply an approximate solution
to the generalised Oberwolfach Problem, 
i.e.\ that any given collection of two-factors
can be embedded in a quasirandom graph
provided that a small fraction of the edges
can be left uncovered: we refer to the papers of
Allen, B\"ottcher, Hladk\'y and Piguet \cite{ABHP},
Ferber, Lee and Mousset \cite{FLM}
and Kim, K\"uhn, Osthus and Tyomkyn \cite{KKOT}.

\medskip

\nib{Notation.}

Given a graph $G = (V,E)$, 
when the underlying vertex set $V$ is clear, 
we will also write $G$ for the set of edges. 
So $|G|$ is the number of edges of $G$. Usually $|V|=n$.
The \emph{edge density} $d(G)$ of $G$ is $|G|/\tbinom{n}{2}$.
We write $N_G(x)$ for the neighbourhood of a vertex $x$ in $G$.
The degree of $x$ in $G$ is $d_G(x)=|N_G(x)|$.
For $A \subseteq V(G)$, we write 
$N_G(A) := \bigcap_{x \in A}N_G(x)$;
note that this is the common neighbourhood 
of all vertices in $A$, not the neighbourhood of $A$.

In a directed graph $J$ with $x \in V(J)$, 
we write $N_J^+(x)$ for the set of out-neighbours of $x$ in $G$ 
and $N_G^-(x)$ for the set of in-neighbours.
We let $d^\pm_G(A) := |N^\pm_G(A)|$.
We define common out/in-neighbourhoods 
$N_J^\pm(A) = \bigcap_{x \in A} N_J^\pm (A).$

We say $G$ is \emph{$(\eps,t)$-typical} 
if $d_G(S) = ((1  \pm \eps)d(G))^{|S|} n$ 
for all $S \subseteq V(G)$ with $|S| \le t$. 

We say that an event $E$ holds with high probability (whp) 
if $\mb{P}(E) > 1 - \exp(-n^c)$ for some $c>0$ and $n>n_0(c)$.
We note that by a union bound for any fixed collection $\mc{E}$
of such events with $|\mc{E}|$ of polynomial growth  
whp all $E \in \mc{E}$ hold simultaneously. 

We omit floor and ceiling signs for clarity of exposition.

We write $a \ll b$ to mean $\forall\ b>0 \
\exists\ a_0>0 \ \forall\ 0<a<a_0$.

We write $a \pm b$ for an unspecified number in $[a-b,a+b]$.

Throughout the vertex set $V$ will come with a cyclic order,
which we usually identify with the natural cyclic order
on $[n]=\{1,\dots,n\}$. For any $x \in V$ we write $x^+$
for the successor of $x$, so if $x \in [n]$ then 
$x^+$ is $x+1$ if $x \ne n$ or $1$ if $x=n$.
We define the predecessor $x^-$ similarly. Given $x,y$ in $[n]$
we write $d(x,y)$ for their cyclic distance,
i.e.\ $d(x,y) = \min \{ |x-y|, n-|x-y| \}$.

\section{Overview of the proof} \label{sec:over}

We will illustrate the ideas of our proof by starting
with a special case and becoming gradually more general.
Suppose first that we wish to decompose a typical dense (undirected)
$2r$-regular graph $G$ on $n$ vertices into $r$ triangle-factors
(i.e.\ two-factors in which each cycle is a triangle
-- we require $3 \mid n$ for this question to make sense).
The existence of such a decomposition 
(also known as a resolvable triangle-decomposition of $G$)
follows from a recent result of the first author \cite{K2}
generalising the existence of designs (see \cite{Kexist})
to many other `design-like' problems. The proof in \cite{K2} goes 
via the following auxiliary decomposition problem, 
which also plays an important role in this paper.

Let $J$ be an auxiliary graph with $V(J)$ partitioned 
as $V \cup W$, where $V=V(G)$ and $|W|=r$.
Let $J[V]=G$, $J[V,W]=V \times W$ and $J[W]=\es$.
Note that a decomposition of $G$ into triangle-factors
is equivalent to a decomposition of $J$ into copies of $K_4$
each having $3$ vertices in $V$ and $1$ vertex in $W$.
Indeed, given such a decomposition of $J$, for each $w \in W$
we define a triangle-factor of $G$ by removing $w$ from all
copies of $K_4$ containing $w$ in the decomposition;
clearly every edge of $G$ appears in exactly one 
of these triangle-factors. Conversely, any decomposition of $G$
into triangle-factors can be converted into a suitable
$K_4$-decomposition of $J$ by adding each $w \in W$ to one of 
the triangle-factors (according to an arbitrary matching).

The auxiliary construction described above is quite flexible,
so a similar argument covers many other cases of our problem.
For example, decomposing $G$ into $C_\ell$-factors
(two-factors in which each cycle has length $\ell$)
is equivalent to decomposing $J$ into `wheels' $W_\ell$
with `rim' in $V$ and `hub' in $W$. (We obtain $W_\ell$ from
$C_\ell$, which is called the rim, by adding a new vertex, 
called the hub, joined to every other vertex, 
by edges that we call spokes.)
Such a decomposition exists by \cite{K2}.

We can encode our generalised Oberwolfach Problem
in full generality by introducing colours on the edges.
For each possible cycle length $\ell$ we introduce a colour,
which we also call $\ell$. For each $w \in W$, we denote its
corresponding factor by $F_w$, and suppose that it has $n^w_\ell$
cycles of length $\ell$ (where $\sum_\ell \ell n^w_\ell = n$).
We colour $J$ so that each $w \in W$ is incident to exactly 
$n^w_\ell$ edges of colour $\ell$, and all other edges are uncoloured.
We colour each $W_\ell$ so that exactly one spoke has colour $\ell$
and all other edges are uncoloured. Then a decomposition of $G$ into
$\{F_w: w \in W\}$ is equivalent to a decomposition of $J$
into wheels with this colouring with rim in $V$ and hub in $W$.
Note that this equivalence does not depend on which edges of $J$
we colour, but to apply \cite{K2} we will require the colouring 
to be suitably quasirandom. Another important constraint
in applying \cite{K2} is that the number of colours
and the size of the wheels should be bounded by an absolute constant.
Thus our generalised Oberwolfach Problem can only be solved by
direct reduction to \cite{K2} in the case that all factors
have all cycle lengths bounded by some absolute constant.

This now brings us to the crucial issue for this paper:
how can we encode two-factors with cycles of arbitrary length
by an auxiliary construction to which \cite{K2} applies?
Before describing this, we pass to an auxiliary problem
of decomposing a subgraph $G'$ of $G$ into graphs
$(G_w: w \in W)$, where each $G_w$ is a vertex-disjoint
union of paths with prescribed endpoints, lengths and vertex set.
More precisely, for each $w \in W$ we are given specified 
lengths $(\ell^w_i: i \in I_w)$, vertex-pairs 
$((x^w_i,y^w_i): i \in I_w)$, a forbidden set $Z_w$, 
and we want each $G_w$ to be a union of vertex-disjoint
$x^w_i y^w_i$-paths of length $\ell^w_i$ for each $i \in I_w$
with $V(G_w)=V(G) \sm Z_w$. We will arrive at this problem
having embedded some subgraphs $F'_w \sub F_w$ of each $w \in W$, 
so the prescribed endpoints will be endpoints of
paths in $F'_w$ that need to be connected up to form cycles,
and $Z_w$ will consist of all vertices of degree $2$ in $F'_w$.
We assume that all lengths $\ell^w_i$ are divisible by $8$
(which is easy to ensure for long cycles).

We will translate the above path factor problem 
into an equivalent problem of decomposing a certain
auxiliary two-coloured directed graph $J$,
with $V(J) = V \cup W$ as in the previous construction.
We call the two colours `$0$' (which means `uncoloured')
and `$K$' (which means `special'). Again, $J[W]=\es$.
For now we defer discussion of $J[V,W]$ 
and describe the arcs of $J[V]$,
which are in bijection with the edges of $G$.
For colour $0$ this bijection simply corresponds
to a choice of orientation for edges, but for 
colour $K$ we employ the following `twisting' construction.
We fix throughout a cyclic order of $V$,
and require that each arc $\ova{xy}$ of colour $K$ in $J$
comes from an edge $xy^+$ of $G$, where $y^+$ denotes 
the successor of $y$ in the cyclic order.

Consider any directed $8$-cycle $C$ in $J$ with vertex sequence
$x_1 \dots x_8$, such that all arcs have colour $0$ 
except that $\ova{x_7 x_8}$ has colour $K$.
The edges in $G$ corresponding to $C$ form a path
with vertex sequence $x_8 x_1 \dots x_7 x_8^+$.
Now suppose we have a family of such cycles
$\mc{C} = (C^i: i \in I)$ where each $C^i$
has vertex sequence $x^i_1 \dots x^i_8$.
Call $\mc{C}$ compatible if 
(i) its cycles are mutually vertex-disjoint, and
(ii) if any $(x^i_8)^+$ is used by a cycle in
$\mc{C}$ then it is some $x^j_8$.
Suppose $\mc{C}$ is compatible and let
$([x_j,y_j]: j \in J)$ denote the family of maximal 
cyclic intervals contained in $\{x^i_8: i \in I\}$.
Then the edges of $G$ corresponding to the cycles of $\mc{C}$
form a family of vertex-disjoint paths $(P_j: j \in J)$, where 
each $P_j$ is an $x_j y_j^+$-path whose vertex sequence is the 
concatenation of vertex sequences of the $8$-paths as described 
above for each cycle of $\mc{C}$ using a vertex of $[x_j,y_j]$.
 
\begin{figure}
\hspace*{-0.4cm}\includegraphics[scale=0.85]{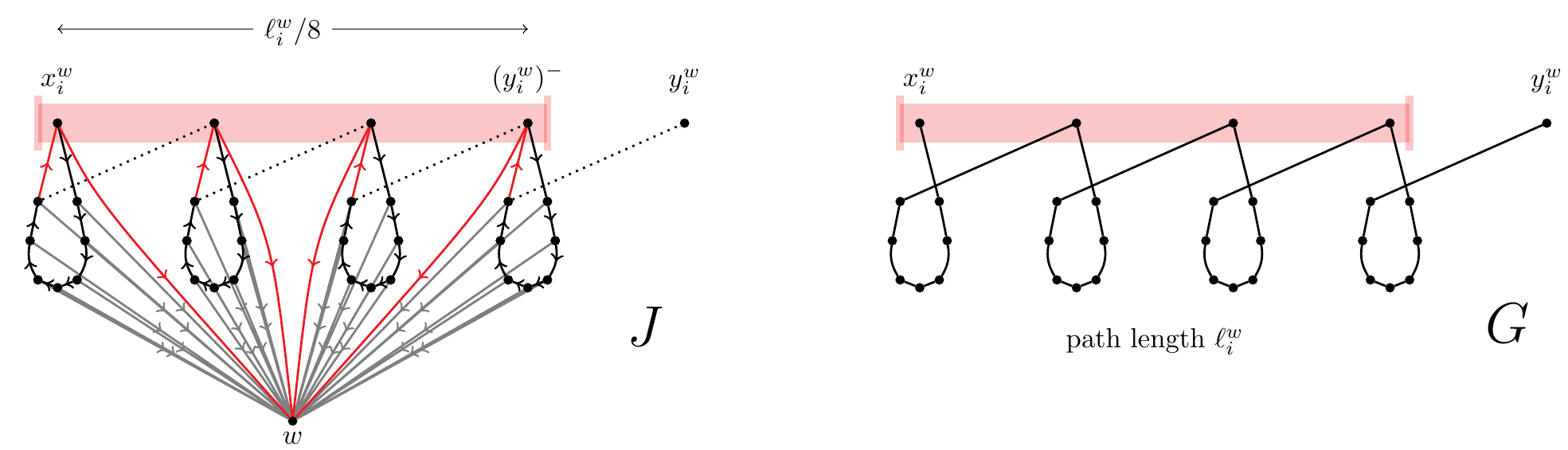}
\end{figure}

The above construction allows us to pass from the path factor problem
to finding certain edge-disjoint compatible cycle families in $J$.
In order for our path factor problem to obey the constraints of
this encoding we require the prescribed vertex-pairs for each $w$
to define disjoint cyclic intervals $([x^w_i,(y^w_i)^-]: i \in I_w)$ 
of lengths $\ell^w_i/8$ (and also that no successor $y^w_i$
is contained in any of the other intervals for $w$).
We are thus introducing extra constraints into the path factor 
problem that may affect up to $n/8$ vertices for each $w$,
but the flexibility on the remaining
vertices will be sufficient.

Now we can complete the description of the auxiliary graph $J$
and the decomposition problem that encodes the compatible cycle
family problem. We define $J[V]$ as above,
and $J[V,W]$ so that all arcs are directed
towards $W$, each in-neighbourhood $N^-_J(w)$ is obtained
from $V(G) \sm Z_w$ by deleting the interval successors
$\{ y^w_i: i \in I_w \}$, all arcs $\ova{xw}$
with $x$ in an interval $[x^w_i,(y^w_i)^-]$ are coloured $K$,
and all other arcs of $J[V,W]$ are coloured $0$.
Finally, the compatible cycle family problem is equivalent
to decomposing $J$ into coloured directed wheels $\wK$,
obtained from $W_8$ by directing the rim cyclically, 
directing all spokes towards the hub $w$, giving colour $K$
to one rim edge $\ova{xy}$ and one spoke $\ova{yw}$,
and colouring the other edges by $0$. 
The deduction from \cite{K2} of the existence of wheel 
decompositions is given in section \ref{sec:wheel}.

We now describe the strategy for the proof of Theorem \ref{main}.
The goal is to embed some parts of our two-factors
so that the remaining problem is of one of two special types
that has an encoding suitable for applying \cite{K2},
either a path factor problem 
encoded as $\wK$-decomposition
or a $C_\ell$-factor problem 
encoded as $\ova{W}_{\!\ell}$-decomposition
(we take the coloured wheel $W_\ell$ discussed above
for $C_\ell$-factors and introduce directions 
as in $\wK$, which are not necessary
but convenient for giving a unified analysis).
We call a factor `long' if it has at least $n/2$ vertices
in cycles of length at least $K$ (as well as denoting the
special colour, $K$ is also used as a large constant length
threshold, above which we treat cycles using the special
twisting encoding as above). We call the other factors `short'.

We start by reducing to the case that all factors are long
or all factors are short. To do so, suppose first that
there are $\OO(n)$ long factors and $\OO(n)$ short factors.
Then we can randomly partition $G$ into typical graphs 
$G^L$ and $G^S$, each of which is regular of the correct degree
(twice the number of long factors for $G^L$
and twice the number of short factors for $G^S$).
If there are $o(n)$ factors of either type then these can 
be embedded one-by-one (by the blow-up lemma \cite{KSS}),
and then the remaining problem still satisfies the conditions
of Theorem \ref{main} (with slightly weaker typicality).
The short factor problem can be further reduced to the case
that there is some length $\ell^*$ such that each factor has
$\OO(n)$ cycles of length $\ell^*$. Indeed, we can divide the
factors into a constant number of groups according to some
choice of cycle length that appears $\OO(n)$ times in each factor
of the group. Any group of $o(n)$ factors can be embedded greedily,
so after taking a suitable random partition, it suffices 
to show that the remaining groups can each be embedded 
in a graph that is typical and regular of the correct degree.

Thus we can assume that we are in one of the following cases.
Case $K$: all factors are long,
our goal is to reduce to $\wK$-decomposition.
Case $\ell^*$: all factors have $\OO(n)$ cycles of length $\ell^*$,
our goal is to reduce to $\wL$-decomposition. 
In any case, the reduction is achieved by applying an approximate decomposition result in a suitable random subgraph,
in which we embed a subgraph of each of our factors.
At this step, in Case $\ell^*$ we embed
all cycles of length $\ne \ell^*$,
and in Case $K$ we embed all short cycles
and some parts of the long cycles as needed 
to reduce to a suitable path factor problem.

This approximate decomposition result is superficially 
similar to the maximum degree $2$ case of
the blow-up lemma for approximate decompositions
due to Kim, K\"uhn, Osthus and Tyomkyn \cite{KKOT}.
However, it does not suffice to use their result,
as we require a decomposition that is compatible
with the conditions of our final decomposition problem 
(into $\wK$ or $\wL$),
so the sets of vertices of the partial factors  
embedded in this step must be suitably quasirandom
and avoid the intervals needed for Case $K$.
Furthermore, we obtain the required approximate
decomposition by similar arguments to those
for the exact decomposition, 
which does not add much extra work.

The technical heart of the paper is a randomised algorithm 
(presented in section \ref{sec:alg}), which gives 
a unified treatment of the cases described above.
It simultaneously (a) partitions almost all of $G$ 
into two graphs $G_1$ and $G_2$, and
(b) sets up auxiliary digraphs $J_1$ and $J_2$ such that 
(i) an approximate wheel decomposition of $J_2$
gives an approximate decomposition of $G_2$
into the partial factors described above, and
(ii) the graph $G'_1$ of edges that are unused by
the approximate decomposition has an auxiliary digraph
that is a sufficiently small perturbation of $J_1$ that 
it can still be used for the exact decomposition step.
The analysis of the algorithm falls naturally into two parts:
the choice of intervals (section \ref{sec:int}),
then regularity properties of an auxiliary 
hypergraph defined by wheels (section \ref{sec:reg}).
The results of this analysis are applied 
to show the existence of the various partial factor
decompositions discussed above:
the approximate step is in section \ref{sec:approx} 
and the exact step in section \ref{sec:exact}.
Section \ref{sec:pf} combines
all the ingredients prepared in the previous sections
to produce the proof of our main theorem.
The final section contains some concluding remarks.

\begin{figure}[H]
\centering
\includegraphics[scale=0.8]{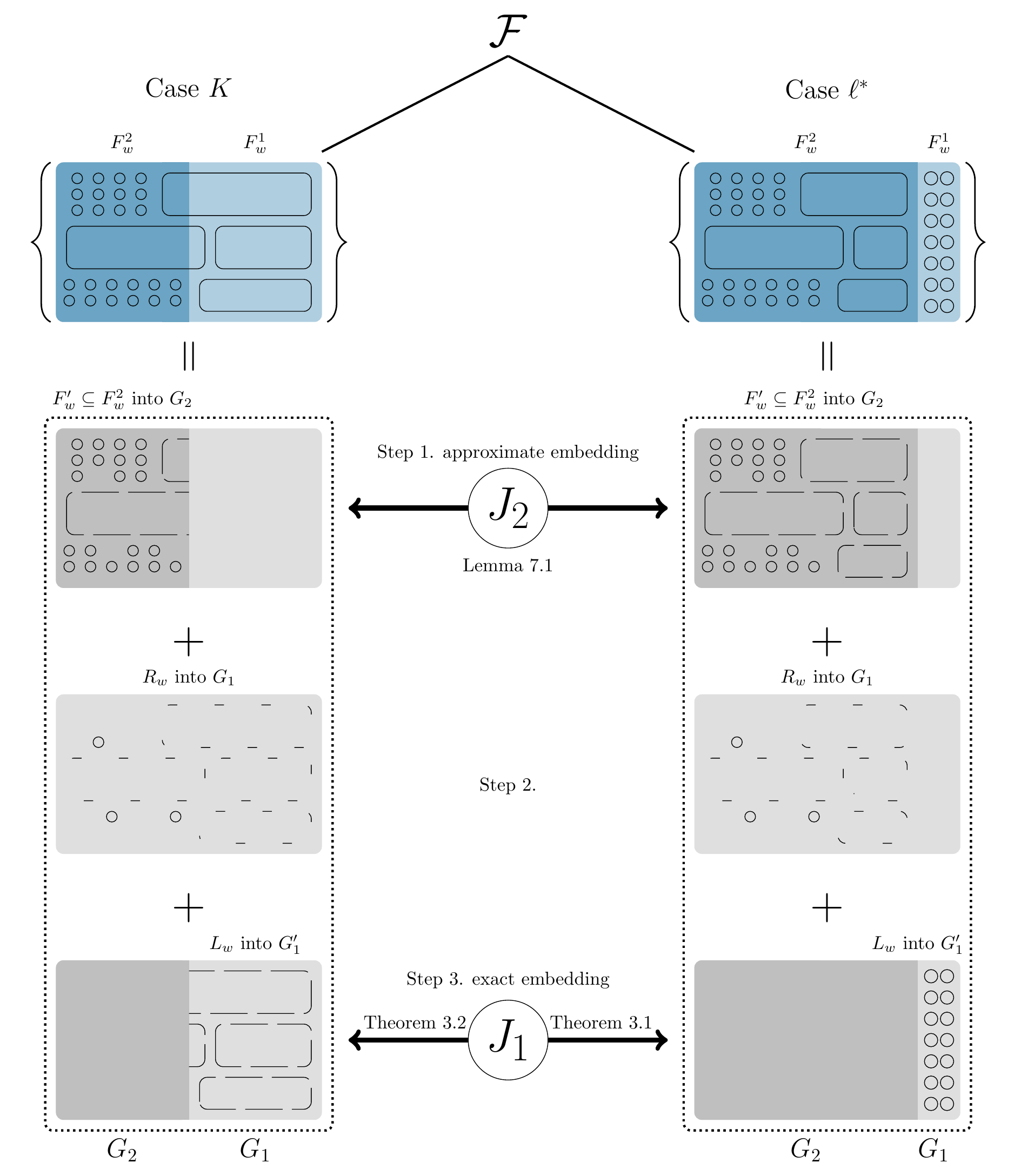}
\caption{An overview of the proof.}
\end{figure}

\section{Wheel decompositions} \label{sec:wheel}

In this section we describe the results we need on wheel
decompositions and how they follow from \cite{K2}.
We start by recalling the coloured wheels
described in section \ref{sec:over}.

For any $c \ge 3$, the uncoloured $c$-wheel consists 
of a directed $c$-cycle (called the rim),
another vertex (called the hub),
and an arc from each rim vertex to the hub.
We obtain the coloured $c$-wheel $\wC$
by giving all arcs colour $0$ except 
that one of the spokes has colour $c$.
We obtain the special $c$-wheel $\wKc$ 
by giving all arcs colour $0$ except 
that one rim edge $\ova{xy}$ 
and one spoke $\ova{yw}$ have colour $K$.
As discussed in section \ref{sec:over},
we will only use $\wKc$ with $c=8$, but here 
we will consider the general configuration
so that the decomposition problems are quite similar.
We start by stating the result for $\wC$.

\begin{center}
\includegraphics{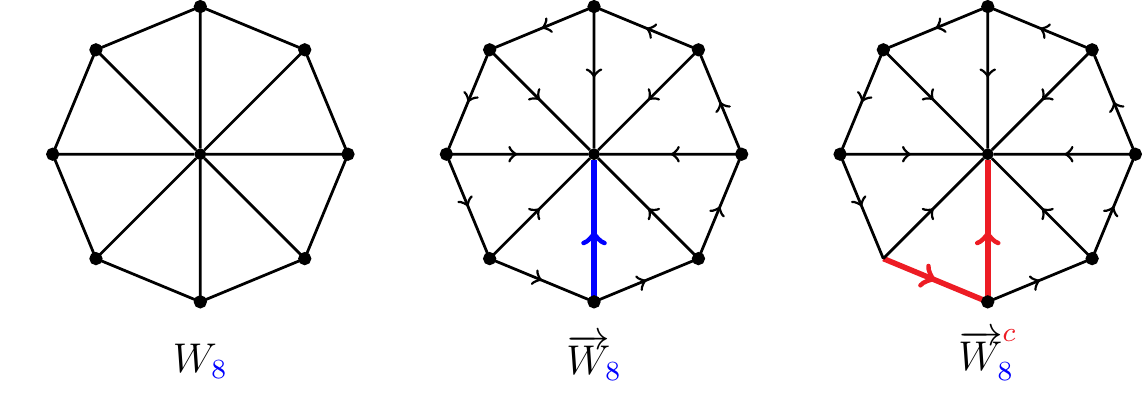}
\end{center}

\begin{theo} \label{decompL}
Let $n^{-1} \ll \dD \ll \oO \ll c^{-1}$ and $h=2^{50c^3}$.
Let $J = J^0 \cup J^c$ be a digraph with arcs coloured
$0$ or $c$, with $V(J)$ partitioned as $(V,W)$
where $\oO n \le |V|, |W| \le n$. 
Then $J$ has a $\wC$-decomposition such that
every hub lies in $W$ if the following hold:

{\em Divisibility:}
all arcs in $J[V]$ have colour $0$,
all arcs in $J[V,W]$ point towards $W$,
$d_J^-(v,V)=d_J^+(v,V)=d_J^+(v,W)$ for all $v \in V$,
and $d^-_J(w) = cd^-_{J^c}(w)$ for all $w \in W$.

{\em Regularity:}
each copy of $\wC$ in $J$ has a weight 
in $[\oO n^{1-c}, \oO^{-1} n^{1-c}]$ such that
for any arc $\ova{e}$ there is total weight $1 \pm \dD$ 
on wheels containing $\ova{e}$.

{\em Extendability:}
for all disjoint $A,B \sub V$ and $C \sub W$
each of size $\le h$ we have
$|N^+_{J^0}(A) \cap N^+_{J^c}(B) \cap W| \ge \oO n$
and $|N^+_{J^0}(A) \cap N^-_{J^0}(B) 
\cap N^-_{J^{c'}}(C)| \ge \oO n$
for both $c' \in \{0,c\}$.
\end{theo}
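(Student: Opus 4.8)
The plan is to derive Theorem~\ref{decompL} as a direct instance of the general decomposition theorem of~\cite{K2}, with pattern $F=\wC$. First I would recast the $\wC$-decomposition problem in the (coloured, partite) language of~\cite{K2}: the host is $J$ together with its vertex partition $(V,W)$ and its $2$-colouring of the arcs, and a ``copy of $F$'' means a copy of $\wC$ in $J$ whose rim lies in $V$, whose hub lies in $W$, and whose colours match those of $\wC$. The theorem of~\cite{K2} then produces an $F$-decomposition of such a host provided it satisfies (i) an $F$-divisibility condition, (ii) an extendability condition with a parameter depending only on $|V(F)|$, and (iii) a regularity condition that amounts to the existence of a spread-out fractional $F$-decomposition. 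The three hypotheses in the statement --- Divisibility, Extendability, Regularity --- are tailored to be exactly (i), (ii), (iii), so the task is to check each correspondence and then invoke~\cite{K2}.

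For the divisibility check, the point is that the refined degree identities in the hypothesis are precisely the local divisibility conditions of the partite framework. In any copy of $\wC$ a vertex $v\in V$ can occur only as a rim vertex --- it has no out-arc to $W$ to spare, since all $V$--$W$ arcs point towards $W$ and $J[W]=\es$ --- consuming one in-rim arc, one out-rim arc and one out-spoke; hence the number of wheels through $v$ must equal each of $d_J^-(v,V)$, $d_J^+(v,V)$ and $d_J^+(v,W)$. Similarly $w\in W$ can occur only as a hub (it has no out-arc at all), consuming $c$ in-spokes of which exactly one has colour $c$; hence the number of wheels through $w$ equals both $d^-_J(w)/c$ and $d^-_{J^c}(w)$, giving $d^-_J(w)=c\,d^-_{J^c}(w)$. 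One also notes that the higher-level divisibility conditions of~\cite{K2} reduce to triviality for a pattern as small and rigid as $\wC$ (a fixed set of vertices of $\wC$ lies in exactly one copy of itself), that the global condition $2c\mid|J|$ follows --- using $|J[V]|=|J[V,W]|$ from the $V$-side identities and $c\mid|J[V,W]|$ from the $W$-side identities --- and, crucially, that this is how the ``hub in $W$'' conclusion is obtained: a wheel with hub in $V$ would consume $2c$ arcs of $J[V]$ and none of $J[V,W]$, which the identity $|J[V]|=|J[V,W]|$ rules out in any decomposition.

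For the remaining two hypotheses, the Extendability hypothesis feeds directly into~\cite{K2}'s extendability/typicality requirement: the lower bounds on $|N^+_{J^0}(A)\cap N^+_{J^c}(B)\cap W|$ and on $|N^+_{J^0}(A)\cap N^-_{J^0}(B)\cap N^-_{J^{c'}}(C)|$ for both $c'\in\{0,c\}$ are exactly the bounded common-neighbourhood estimates, refined by orientation, colour and side of the partition, that are needed to extend partial copies of $\wC$ and to run the absorbing step, with $h=2^{50c^3}$ chosen to exceed the extendability parameter~\cite{K2} demands for a pattern of this size. The Regularity hypothesis supplies a fractional $\wC$-decomposition with weights on the natural scale $n^{1-c}$ (a fixed arc lies in $\Theta(n^{c-1})$ wheels) covering every arc with total weight $1\pm\dD$; this is precisely the regular input that~\cite{K2} turns, via its nibble-plus-absorption machinery, into an exact decomposition, the hierarchy $n^{-1}\ll\dD\ll\oO\ll c^{-1}$ controlling the accumulated errors. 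With (i)--(iii) verified, $J$ has a $\wC$-decomposition, and by the divisibility discussion every hub lies in $W$.

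The main obstacle is not a single estimate but the faithful translation between the concrete coloured-digraph setup here and the abstract hypotheses of~\cite{K2}: one must pin down the exact form of that theorem's divisibility and regularity conditions in the partite setting, set up $\wC$ (with its colours and the partition of its vertices into the $c$ rim vertices and the hub) so that ``copy of $F$ in the host'' means precisely ``copy of $\wC$ respecting $(V,W)$ and the colouring'', and then confirm that nothing is lost --- that the colour constraints on $J[V]$ and on the spokes are compatible with $\wC$, that no spurious copies interfere, and that the stated degree identities are sufficient and not merely necessary. I expect this bookkeeping, rather than the probabilistic core (which lives entirely inside~\cite{K2}), to be where the care is needed.
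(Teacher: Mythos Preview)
Your proposal is essentially correct and follows the same route as the paper: both deduce Theorem~\ref{decompL} from the general decomposition machinery of~\cite{K2} by encoding the problem in a partite coloured setting, then verifying divisibility, regularity and extendability. The paper in fact omits the details for Theorem~\ref{decompL} entirely, carrying them out only for the harder Theorem~\ref{decompK}, and routes the argument through the cleaner interface of~\cite[Theorem~7.4]{K3} (the functional encoding with a labelled complex $\Phi$), but the substance is the same as what you sketch.

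Two small points of cleanup. First, your counting argument that ``$|J[V]|=|J[V,W]|$ rules out hub-in-$V$ wheels'' is a correct observation about any decomposition, but it is not how the conclusion is obtained: in the partite framework you have already set up (rim in $V$, hub in $W$), only hub-in-$W$ copies are ever considered, so the theorem of~\cite{K2}/\cite{K3} directly produces a decomposition of the required form. The counting remark is redundant, and leaning on it would be dangerous if you had \emph{not} imposed the partite constraint, since the Regularity and Extendability hypotheses say nothing about hub-in-$V$ wheels. Second, the ``higher-level divisibility conditions reduce to triviality'' is not quite right: the $2$-divisibility conditions in the paper's framework are exactly the statements that arcs of $J$ have the correct colours and types with respect to the partition (e.g.\ all arcs in $J[V]$ have colour $0$, $J[W]=\es$), which are not trivial but are part of the stated Divisibility hypothesis; the $0$-divisibility condition is the global count you mention. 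These are bookkeeping matters, not gaps.
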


Before stating our result on $\wK$-decompositions,
we recall that $V$ has a cyclic order, which we can 
identify with the natural cyclic order on $[n]$,
and define the following separation properties.

\begin{defn} \label{def:sep}
For $1 \le x<y \le n$ the cyclic distance is
$d(x,y) = \min\{y-x,n+x-y\}$.
We say that $S \sub [n]$ is $d$-separated
if $d(a,a') \ge d$ for all distinct $a,a'$ in $S$.
For disjoint $S,S' \sub [n]$ we say $(S,S')$
is $d$-separated if $d(a,a') \ge d$ 
for all $a \in S$, $a' \in S'$.
\end{defn}

Now we state our result on $\wK$-decompositions.
We note that it only concerns digraphs $J$ such that 
$d(x,y) \ge d$ for all $\ova{xy} \in J[V]$,
as this is implied by the regularity assumption.
Our proof of Theorem \ref{main} will require us 
to only consider such $J$, so that we
can satisfy the extendability assumption.

\begin{theo} \label{decompK}
Let $n^{-1} \ll \dD \ll \oO \ll c^{-1}$. 
Let $h=2^{50c^3}$ and $d \ll n$.
Let $J = J^0 \cup J^K$ be a digraph with arcs coloured
$0$ or $K$, with $V(J)$ partitioned as $(V,W)$
where $\oO n \le |V|, |W| \le n$, such that
all arcs in $J[V,W]$ point towards $W$ and $J[W]=\es$.
Then $J$ has a $\wKc$-decomposition such that
every hub lies in $W$ if the following hold:

{\em Divisibility:}
$d^-_J(w) = cd^-_{J^K}(w)$ for all $w \in W$,
and for all $v \in V$ we have
$d_J^-(v,V)=d_J^+(v,V)=d_J^+(v,W)$
and $d^-_{J^K}(v,V)=d^+_{J^K}(v,W)$.

{\em Regularity:}
each $3d$-separated copy of $\wKc$ in $J$ has a weight 
in $[\oO n^{1-c}, \oO^{-1} n^{1-c}]$ such that
for any arc $\ova{e}$ there is total weight $1 \pm \dD$ 
on wheels containing $\ova{e}$.

{\em Extendability:}
for all disjoint $A,B \sub V$ and $L \sub W$
each of size $\le h$,
for any $a, b, \ell \in \{0,K\}$ we have
$|N^+_{J^a}(A) \cap N^-_{J^b}(B) 
\cap N^-_{J^\ell}(L)| \ge \oO n$,
and furthermore, if $(A,B)$ is $3d$-separated then
$|N^+_{J^0}(A) \cap N^+_{J^K}(B) \cap W| \ge \oO n$.
\end{theo}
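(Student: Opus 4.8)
The plan is to deduce Theorem~\ref{decompK} from the main decomposition theorem of \cite{K2}, in close parallel with the deduction of Theorem~\ref{decompL}; the two arguments differ only in the bookkeeping for the twisted colour-$K$ pair $(\ova{xy},\ova{yw})$ and in the systematic use of $3d$-separation. First I would reformulate the conclusion as a perfect matching problem: let $\mc{H}$ be the hypergraph whose vertices are the (coloured) arcs of $J$ and whose edges are the arc sets of the $3d$-separated copies of $\wKc$ in $J$ whose hub lies in $W$. A perfect matching of $\mc{H}$ is exactly a $\wKc$-decomposition of $J$ with every hub in $W$, so it suffices to produce one. We restrict to $3d$-separated copies because these are the only copies to which the Regularity hypothesis gives us access; and, as noted before the statement, that hypothesis already forces a separation condition on every arc of $J[V]$, so no copy of $\wKc$ is discarded for a spurious local reason. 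To apply \cite{K2} one encodes the two colours in the standard way (arcs of different colours are assigned different types), so that $\wKc$ with its prescribed colouring becomes an admissible pattern on $c+1$ vertices; the colour data is then carried along automatically by the framework of \cite{K2}.

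It then remains to check that the Divisibility, Regularity and Extendability assumptions imply the corresponding hypotheses of \cite{K2}. Divisibility is verified by the local count at each vertex. A vertex $v \in V$ appears in a copy of $\wKc$ only as a rim vertex, where it contributes one in-arc and one out-arc of $J[V]$ together with one spoke into $W$; balancing these over all copies through $v$ forces $d^-_J(v,V)=d^+_J(v,V)=d^+_J(v,W)$. Tracking the colour-$K$ pair, the vertex $v$ in the role of the head $y$ contributes one colour-$K$ in-arc of $J[V]$ and one colour-$K$ spoke, which balance iff $d^-_{J^K}(v,V)=d^+_{J^K}(v,W)$, while the role of the tail $x$ imposes no further constraint. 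A vertex $w \in W$ appears only as a hub, receiving $c$ spokes per copy of which exactly one has colour $K$, so the copies through $w$ balance iff $d^-_J(w)=c\,d^-_{J^K}(w)$, using also $J[W]=\es$ and that all arcs of $J[V,W]$ point towards $W$. Thus the stated Divisibility is precisely the arithmetic data required by \cite{K2}. Regularity is immediate: the assumed weighting on $3d$-separated copies, with weights polynomially bounded in $[\oO n^{1-c},\oO^{-1}n^{1-c}]$ and total weight $1\pm\dD$ through each arc, is exactly the regularity (weighting) hypothesis of \cite{K2}, a vanishing weight on non-$3d$-separated copies being harmless.

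The substantive point is Extendability. The theorem of \cite{K2} requires that the ``link'' of every partial copy of the pattern, and of every bounded configuration arising in its iterative absorption, remains linear in $n$. For a partial copy of $\wKc$ this link is an intersection of at most $c+1$ common in/out-neighbourhoods of the chosen vertices, each taken in the appropriate colour; the first displayed inequality of Extendability, asserted for all colour choices $a,b,\ell \in \{0,K\}$ and all sets of size at most $h$, bounds exactly these intersections. The restriction to $3d$-separated copies only affects the step of choosing the hub $w$ compatibly with an already-chosen $3d$-separated partial rim: this is governed by the ``furthermore'' clause, which gives $|N^+_{J^0}(A) \cap N^+_{J^K}(B) \cap W| \ge \oO n$ when $(A,B)$ is $3d$-separated. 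The steps that add rim vertices instead are handled by the unconditional inequality, since forbidding the $o(n)$ vertices within cyclic distance $3d$ of the current rim removes only a negligible fraction of the common neighbourhood. The constant $h = 2^{50c^3}$ is chosen to dominate every bounded set size that arises in the construction of \cite{K2} for a pattern on $c+1$ vertices, so the two Extendability inequalities cover all of its robustness conditions; the theorem of \cite{K2} then yields a perfect matching of $\mc{H}$, i.e.\ the desired $\wKc$-decomposition.

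I expect the main obstacle to be precisely this Extendability verification: one must ensure that when a partially chosen $3d$-separated copy of $\wKc$ is completed vertex by vertex --- both when building individual wheels and when constructing the absorber gadgets of \cite{K2} --- each new vertex can be chosen so that $3d$-separation is preserved while $\oO n$ choices remain, which is exactly why the hypothesis isolates the $3d$-separated common-neighbourhood bound rather than demanding it unconditionally. A secondary technical point is confirming that \cite{K2} applies to coloured patterns with these mixed in/out-degree constraints across the two parts $V$ and $W$; this is routine, but the encoding must be arranged so that the divisibility conditions translate without loss.
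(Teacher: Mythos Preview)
Your proposal is correct and follows essentially the same route as the paper: reduce to the coloured directed design theorem derived from \cite{K2} (stated in \cite{K3}) by verifying divisibility, regularity, and vertex-extendability for the pattern $\wKc$ on the labelled complex of $3d$-separated partial embeddings. The one concrete device you only gesture at in your final paragraph is that the paper renames the spoke colours $0,K$ in $J[V,W]$ to $0',K'$ so that each colour has a single fixed type with respect to the partition $([c],\{c_+\})$, which is exactly the canonicality condition needed for \cite[Theorem 7.4]{K3} and which makes the divisibility translation you anticipate go through cleanly.
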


For the remainder of this section we will explain
how Theorem \ref{decompK} follows from \cite{K2}
(we omit the similar and simpler details
for Theorem \ref{decompL}). We follow the exposition 
in \cite{K3}, which deduces from \cite{K2} a general 
result on coloured directed designs that we will apply here.

\subsection{The functional encoding}

We encode any digraph $J$ by a set of functions $\mf{J}$,
where for each arc $\ova{ab} \in J$ we include in $\mf{J}$
the function $(1 \mapsto a, 2 \mapsto b)$, i.e.\ the 
function $f:[2] \to V(J)$ with $f(1)=a$ and $f(2)=b$.
We will identify $\mf{J}$ with its characteristic vector,
i.e.\ $\mf{J}_f = 1_{f \in \mf{J}}$; if we want to emphasise
the vector interpretation we write $\ul{\mf{J}}$.
If $J$ has coloured arcs, and $\ell$ is a colour,
we write $J^\ell$ for the digraph in colour $\ell$,
which is encoded by $\mf{J}^\ell$.

We will consider decompositions by a coloured digraph $H$
defined as follows. We start with $\wKc$ on the vertex set
$[c+1]$, where we label the rim cycle by $[c]$ cyclically
(so $c+1$ is the hub) so that, writing $c_-=c-1$ and $c_+=c+1$, 
$\ova{c_- c}$ and $\ova{c c_+}$ have colour $K$ 
and all other arcs have colour $0$.
We let $\mc{P}$ be the partition $([c],\{c_+\})$ of $[c+1]$.
We introduce new colours $0'$ and $K'$, and change the colours
of $\ova{c c_+}$ to $K'$ and of the other spokes to $0'$.
We do this so that $H$ is `$(\mc{P},\text{id})$-canonical'
in the sense of \cite[Definition 7.1]{K3}; 
specialised to our setting, the relevant properties
are that $H$ is an oriented graph 
(with no multiple edges or $2$-cycles) 
and that for each colour all of its arcs have one fixed
pattern with respect to $\mc{P}$
(specifically, for colours $0$ and $K$
all arcs are contained in $[c]$, 
and for colours $0'$ and $K'$
all arcs are directed from $[c]$ to $\{c_+\}$).

Now we translate the $H$-decomposition problem 
for a digraph $J$ into its functional encoding.
We will have a partition $\mc{Q}=(V,W)$ of $V(J)$, 
and wish to decompose $J$ by copies $\phi(H)$ of $H$
such that $\phi([c]) \sub V$ and $\phi(c_+) \in W$
(i.e.\ wheels with hub in $W$),
and $\phi([c])$ is $3d$-separated
(in which case we will say that the graph $\phi(H)$ is $3d$-separated).
We think of the 
functional encoding $\mf{J}$ as living inside a
`labelled complex' $\Phi$ of all possible partial
embeddings of $H$: we define $\Phi = (\Phi_B: B \sub [c+1])$,
where each $\Phi_B$ consists of all injections 
$\phi:B \to V(J)$ such that $\phi(B \cap [c]) \sub V$,
$\phi(B \cap \{c_+\}) \sub W$ and $Im(\phi)$ is $3d$-separated.
The set of functional encodings of possible embeddings 
of $H$ (if present in $\mf{J}$) is then
\[ H(\Phi) := \{ \phi \mf{H} : \phi \in \Phi_{[c+1]} \},
\quad \text{where } 
\phi \mf{H} := \{ \phi \circ \tT: \tT \in \mf{H} \}.\]
The $H$-decomposition problem for $J$ is equivalent
to finding $\mc{H} \sub H(\Phi)$ with 
$\sum \{ \ul{\mf{H}'}: \mf{H}' \in \mc{H} \} = \ul{\mf{J}}$,
or equivalently $\bigcup \mc{H} = \mf{J}$
(where if $\mf{J}$ has multiple edges 
we consider a multiset union).
We call such $\mc{H}$ an $H$-decomposition in $\Phi$.

\subsection{Regularity}

Now we will describe the hypotheses of the theorem
that will give us an $H$-decomposition in $\Phi$.
We start with regularity, which is simply the 
functional encoding of the regularity assumption
in Theorem \ref{decompK}. Specifically, we say 
$J$ is $(H,\dD,\oO)$-regular in $\Phi$ if there are 
weights $y_\phi \in [\oO n^{1-c}, \oO^{-1} n^{1-c}]$
for each $\phi \in \Phi_{[c+1]}$ 
with $\phi \mf{H} \sub \mf{J}$ such that 
$\sum_\phi y_\phi \ul{\phi \mf{H}} = (1 \pm \dD)\ul{\mf{J}}$.

\subsection{Extendability}

Next we consider extendability, which we discuss 
in a simplified setting that suffices for our purposes,
following \cite[Definition 7.3]{K3}.
The idea is that for any vertex $x$ of $H$
there should be many ways to extend certain sets 
of partial embeddings of $H-x$ to embeddings of $H$.
Specifically, we say $(\Phi,J)$ is $(\oO,h,H)$-vertex-extendable
if for any $x \in [c+1]$ and disjoint $A_i \sub V \cup W$
for $i \in [c+1] \sm \{x\}$ each of size $\le h$ such that 
$(i \mapsto v_i: i \in [c+1] \sm \{x\}) \in \Phi$
whenever each $v_i \in A_i$, there are at least $\oO n$
vertices $v$ such that
\begin{enumerate}
\item $(i \mapsto v_i: i \in [c+1]) \in \Phi$
whenever $v_x=v$ and $v_i \in A_i$ for each $i \ne x$, and
\item each $\mf{J}^\ell$ with $\ell \in \{0,K,0',K'\}$ 
contains all $(1 \mapsto v_1, 2 \mapsto v_2)$
where for some $\tT \in \mf{H}^\ell$ we have
($v_1=v \ \& \ v_2 \in A_{\tT(2)}$) or
($v_2=v \ \& \ v_1 \in A_{\tT(1)}$).
\end{enumerate}
Note that by definition of $\Phi$ this only concerns 
maps $\phi$ such that $Im(\phi)$ is $3d$-separated.
To interpret (ii) we consider $4$ cases 
according to the position of $x$ in the wheel.
\begin{description}
\item[$x=c+1$.]
For any pairwise $3d$-separated $A_i \sub V$, $i \in [c]$ of sizes
$\le h$ there are at least $\oO n$ vertices $v$
such that $\ova{v_c v} \in J^{K'}$ for all $v_c \in A_c$
and $\ova{v_i v} \in J^{0'}$ for all $v_i \in A_i$, $i \ne c$.
Equivalently, for any disjoint $A,B \sub V$
with $|A| \le h$ and $|B| \le (c-1)h$
such that $(A,B)$ is $3d$-separated we have 
$|N^+_{J^{K'}}(A) \cap N^+_{J^{0'}}(B)| \ge \oO n$.
\item[$x=c$.]
For any pairwise $3d$-separated $A_i \sub V$, $i \in [c-1]$ 
and $A_{c+1} \sub W$ of sizes $\le h$ 
there are at least $\oO n$ vertices $v$ such that 
$\ova{v v_{c+1}} \in J^{K'}$ for all $v_{c+1} \in A_{c+1}$,
$\ova{v_{c-1} v} \in J^K$ for all $v_{c-1} \in A_{c-1}$,
and $\ova{vv_1} \in J^0$ for all $v_1 \in A_1$.
Equivalently, for any disjoint $A,B \sub V$ and $C \sub W$
of sizes $\le h$ such that $(A,B)$ is $3d$-separated we have
$|N^+_{J^K}(A) \cap N^-_{J^0}(B) \cap N^-_{J^{K'}}(C)| \ge \oO n$.
\item[$x=c-1$.]
For any pairwise $3d$-separated
$A_i \sub V$, $i \in [c] \sm \{c-1\}$
and $A_{c+1} \sub W$ of sizes $\le h$ 
there are at least $\oO n$ vertices $v$ such that 
$\ova{v v_{c+1}} \in J^{0'}$ for all $v_{c+1} \in A_{c+1}$,
$\ova{v v_c} \in J^K$ for all $v_c \in A_c$,
and $\ova{v_{c-2} v} \in J^0$ for all $v_{c-2} \in A_{c-2}$.
Equivalently, for any disjoint $A,B \sub V$ and $C \sub W$
of sizes $\le h$ such that $(A,B)$ is $3d$-separated we have
$|N^-_{J^K}(A) \cap N^+_{J^0}(B) \cap N^-_{J^{0'}}(C)| \ge \oO n$.
\item[$x \in \brak{c-2}$.]
For any pairwise $3d$-separated
$A_i \sub V$, $i \in [c] \sm \{x\}$
and $A_{c+1} \sub W$ of sizes $\le h$ 
there are at least $\oO n$ vertices $v$ such that 
$\ova{v v_{c+1}} \in J^{0'}$ for all $v_{c+1} \in A_{c+1}$,
$\ova{v v_{x+1}} \in J^0$ for all $v_{x+1} \in A_{x+1}$,
and $\ova{v_{x-1} v} \in J^0$ for all $v_{x-1} \in A_{x-1}$,
where $A_0 := A_c$.
Equivalently, for any disjoint $A,B \sub V$ and $C \sub W$
of sizes $\le h$ such that $(A,B)$ is $3d$-separated we have
$|N^-_{J^0}(A) \cap N^+_{J^0}(B) \cap N^-_{J^{0'}}(C)| \ge \oO n$.
\end{description}
All of these conditions follow from the extendability
assumption in Theorem \ref{decompK} 
(after renaming colours $0$ and $K$
in $J[V,W]$ as $0'$ and $K'$, and replacing $h$ with $(c-1)h$).

\subsection{Divisibility}

It remains to consider divisibility;
we follow \cite[Definition 7.2]{K3}.
For integers $s \le t$ we write $I^s_t$ 
for the set of injections from $[s]$ to $[t]$.
We identify $V \cup W$ with $[n']$ for some $n'$. 
For $0 \le i \le 2$, $\psi \in I^i_{n'}$, $\tT \in I^i_{c+1}$,
we define index vectors in $\mb{N}^2$ describing types 
with respect to the partitions $\mc{P}$ or $\mc{Q}$: we write 
$i_{\mc{P}}(\tT) = (|Im(\tT) \cap [c]|,|Im(\tT) \cap \{c_+\}|)$ 
and $i_{\mc{Q}}(\psi) = (|Im(\psi) \cap V|,|Im(\psi) \cap W|)$.
For example, for $\tT = (1 \mapsto c_-, 2 \mapsto c) \in \mf{H}$ 
we have $i_{\mc{P}}(\tT) = (2,0)$. We define degree vectors
$\mf{H}(\tT)^*$ and $\mf{J}(\psi)^*$ in $\mb{N}^{C \times I^i_2}$ by 
\[ \mf{H}(\tT)^*_{\ell\pi}=|\mf{H}^\ell(\tT\pi^{-1})|
\ \ \text{ and } \ \
\mf{J}(\psi)^*_{\ell\pi}=|\mf{J}^\ell(\psi\pi^{-1})|, \]
where e.g.\ $\mf{H}^\ell(\tT\pi^{-1})$ denotes the set of 
$\tT' \in \mf{H}^\ell$ having $\tT\pi^{-1}$ as a restriction.
Letting $\sgen{\cdot}$ denote the integer span of a set of vectors,
we say $J$ is $H$-divisible in $\Phi$ if
\[ \mf{J}(\psi)^* \in \sgen{\mf{H}(\tT)^*: 
i_{\mc{P}}(\tT) = i_{\mc{Q}}(\psi) } 
\ \ \text{ for all } \psi \in \Phi. \]
We refer to the divisibility conditions for index vectors
$(i_1,i_2)$ with $i_1+i_2=j$ as $j$-divisibility conditions,
where we assume $0 \le j \le 2$, as otherwise they are vacuous.
We describe these conditions concretely as follows.

{\bf $2$-divisibility.}
These conditions simply say that the arcs of $J$
have the same types with respect to $\mc{Q}$
as those of $H$ do with respect to $\mc{P}$,
i.e.\ all arcs of $J[V]$ have colour $0$ or $K$,
all arcs of $J[V,W]$ have colour $0'$ or $K'$, and $J[W]=\es$. 
To see this, consider any degree vector $\mf{H}(\tT)^*$
with $\tT \in I^2_{c+1}$. We write 
$\text{id} = (1 \mapsto 1, 2 \mapsto 2)$ and
$(12) = (1 \mapsto 2, 2 \mapsto 1)$.
For any $\ell \in C$, $\pi \in \{\text{id},(12)\}$
we have $\mf{H}(\tT)^*_{\ell \pi}$ equal to $1$
if $(\ell,\pi)$ is the pair such that
$\tT \circ \pi^{-1} \in \mf{H}^\ell$
(there is at most one such pair)
or equal to $0$ otherwise.
For example, if $\tT = (1 \mapsto c, 2 \mapsto c_-)$
then $\mf{H}(\tT)^*_{\ell \pi}$ is $1$
if $(\ell,\pi)=(K,(12))$, otherwise $0$.
Thus $\mf{H}\sgen{(i_1,i_2)} :=
\sgen{\mf{H}(\tT)^*: i_{\mc{P}}(\tT) = (i_1,i_2)}$
consists of all integer vectors supported in coordinates
with colours in $\{0,K\}$ if $(i_1,i_2)=(2,0)$
or $\{0',K'\}$ if $(i_1,i_2)=(1,1)$, whereas
$\mf{H}\sgen{(0,2)}$ only contains the all-$0$ vector.
Therefore, the $2$-divisibility conditions say that 
$\mf{J}(\psi)^*$ can be non-zero only at coordinates
with colours in $\{0,K\}$ if $i_{\mc{Q}}(\psi)=(2,0)$
or $\{0',K'\}$ if $i_{\mc{Q}}(\psi)=(1,1)$, 
and $\mf{J}(\psi)^*=0$ if $i_{\mc{Q}}(\psi)=(0,2)$,
i.e.\ $J$ has the same arc types with respect to $\mc{Q}$
as $H$ with respect to $\mc{P}$.

{\bf $0$-divisibility.}
Writing $\es$ for the function with empty domain,
all $\mf{H}(\es)^*_{\ell \es} = |\mf{H}^\ell|=|H^\ell|$,
and similarly for $J$, so the $0$-divisibility condition 
is that for some integer $m$ all $|J^\ell|=m|H^\ell|$.
For our specific $H$, this is equivalent to
$|J[V]|=|J[V,W]|=c|J^c[V]|=c|J^c[V,W]|$.

{\bf $1$-divisibility.}
Given $\tT=(1 \mapsto a) \in I^1_{c+1}$ 
and $\ell \in C =\{0,K,0',K'\}$,
the two coordinates of $\mf{H}(\tT)^*$
corresponding to colour $\ell$ 
are the in/outdegrees of $a$ in $H^\ell$: 
we have $\mf{H}(\tT)^*_{\ell \text{id}} 
= |\mf{H}(1 \mapsto a)| = d^+_{H^\ell}(a)$
and $\mf{H}(\tT)^*_{\ell (12)} 
= |\mf{H}(2 \mapsto a)| = d^-_{H^\ell}(a)$.
Similarly, for $\psi=(1 \mapsto v) \in I^1_{n'}$
the coordinates of $\mf{J}(\psi)^*$ corresponding 
to colour $\ell$ are $d^\pm_{J^\ell}(v)$. We compute:

\begin{tabular}{LCCCCCCCC}
\mf{H}(1 \mapsto a)^*  & d^+_{H^0}(a)  & d^-_{H^0}(a)
 & d^+_{H^K}(a)  & d^-_{H^K}(a) & d^+_{H^{0'}}(a)
 & d^-_{H^{0'}}(a) & d^+_{H^{K'}}(a) & d^-_{H^{K'}}(a)  \\
a = c_+  & 0 & 0 & 0 & 0 & 0 & c-1 & 0 & 1 \\ 
a = c  & 1 & 0 & 0 & 1 & 0 & 0 & 1 & 0 \\
a = c_-  & 0 & 1 & 1 & 0 & 1 & 0 & 0 & 0 \\
a \in [c-2]  & 1 & 1 & 0 & 0 & 1 & 0 & 0 & 0
\end{tabular}
\begin{align*}
\text{ so } \ \ \sgen{\mf{H}(1 \mapsto c_+)^*}
& = \{ \bm{v} \in \mathbb{Z}^8 : v_1=v_2=v_3=v_4=v_5=v_7=0, 
v_6=(c-1)v_8 \}, \text{ and }  \\
 \sgen{\mf{H}(1 \mapsto a)^*: a \in [c]}
& = \{ \bm{v} \in \mathbb{Z}^8 : v_2=v_5, v_4=v_7, 
v_1+v_3=v_2+v_4, v_6=v_8=0 \}.
\end{align*}
For $w \in W$ the $1$-divisibility condition
is $\mf{J}(1 \mapsto w)^* \in \sgen{\mf{H}(1 \mapsto c_+)^*}$,
i.e.\ $d^-_{J^{0'}}(w) = (c-1)d^-_{J^{K'}}(w)$,
or equivalently $d^-_J(w) = cd^-_{J^{K'}}(w)$.
For $v \in V$ the $1$-divisibility condition
is $\mf{J}(1 \mapsto v)^* \in 
\sgen{\mf{H}(1 \mapsto a)^*: a \in [c]}$,
which is equivalent to
$d^-_{J^K}(v) = d^+_{J^{K'}}(v)$ 
and $d^+_J(v,V) = d^-_J(v,V) = d^+_J(v,W)$.

All of these divisibility conditions follow from the 
divisibility assumption in Theorem \ref{decompK}
(after renaming colours $0$ and $K$
in $J[V,W]$ as $0'$ and $K'$).
By the above discussion, Theorem \ref{decompK}
follows from the following special case
of \cite[Theorem 7.4]{K3}.

\begin{theo}
Let $n^{-1} \ll \dD \ll \oO \ll c^{-1}$. 
Let $h=2^{50c^3}$ and $d \ll n$.
Let $J$ be a digraph with $V(J)$ partitioned as $(V,W)$
where $\oO n \le |V|, |W| \le n$, such that $J[W]=\es$,
all arcs in $J[V,W]$ point towards $W$,
all arcs in $J[V]$ are coloured $0$ or $K$
and all arcs in $J[V,W]$ are coloured $0'$ or $K'$.
Let $\Phi = (\Phi_B: B \sub [c+1])$,
where $\Phi_B$ consists of all injections 
$\phi:B \to V(J)$ such that $\phi(B \cap [c]) \sub V$,
$\phi(B \cap \{c_+\}) \sub W$ and $Im(\phi)$ is $3d$-separated.
Suppose $J$ is $H$-divisible in $\Phi$
and $(H,\dD,\oO)$-regular in $\Phi$ and
$(\Phi,J)$ is $(\oO,h,H)$-vertex-extendable.
Then $J$ has an $H$-decomposition in $\Phi$.
\end{theo}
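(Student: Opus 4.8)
The plan is to obtain the theorem as a direct instance of the general transference result for coloured directed designs, \cite[Theorem 7.4]{K3}. Essentially all of the preparatory work has already been done in the preceding subsections: we have fixed the target graph $H$ (the relabelled special wheel $\wKc$ on $[c+1]$ with spoke colours $0',K'$), noted that it is $(\mc{P},\text{id})$-canonical, set up the labelled complex $\Phi$ of $3d$-separated partial embeddings respecting the partition $\mc{Q}=(V,W)$, and translated the divisibility, regularity and extendability hypotheses of Theorem \ref{decompK} into their functional forms: $H$-divisibility of $J$ in $\Phi$, $(H,\dD,\oO)$-regularity of $J$ in $\Phi$, and $(\oO,h,H)$-vertex-extendability of $(\Phi,J)$. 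These are precisely the hypotheses that \cite[Theorem 7.4]{K3} requires for this $H$ and this $\Phi$, so its conclusion gives some $\mc{H} \sub H(\Phi)$ with $\bigcup \mc{H}=\mf{J}$, which is exactly an $H$-decomposition of $J$ in $\Phi$.

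In carrying this out, the points to verify are the following. First, that $\Phi=(\Phi_B : B \sub [c+1])$ is a legitimate labelled complex for the framework of \cite{K3}: it is nonempty and closed under restriction, since a restriction of an injection with $3d$-separated image that maps the $[c]$-part into $V$ and $c_+$ into $W$ retains all of these properties, and it is compatible with the partitions $\mc{P}$, $\mc{Q}$ and the partition-type constraints that $H$ imposes. Second, that the quantitative data fit: the hierarchy $n^{-1} \ll \dD \ll \oO \ll c^{-1}$, the linear bounds $\oO n \le |V|,|W| \le n$, the constant $h = 2^{50c^3}$, and the bound $d \ll n$ on the separation parameter are all at least as strong as what \cite[Theorem 7.4]{K3} demands (in particular $h$ lies above the extendability threshold of that theorem and $\dD$ is small enough relative to $\oO$). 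Third, that the four-case description of vertex-extendability given above is genuinely equivalent to the condition of \cite[Definition 7.3]{K3} specialised to this $H$ and to each vertex $x \in [c+1]$. Once these are checked, the theorem follows by quoting \cite[Theorem 7.4]{K3}.

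The only genuinely new feature relative to the plain (uncoloured, undirected, unconstrained) design setting is the $3d$-separation built into $\Phi$, so the main thing to make sure of is that such a constraint is admissible within the complex formalism of \cite{K3}. It is, because separation is a hereditary, purely local condition on the image of an embedding, and the extendability hypothesis has been stated precisely so as to supply, for every vertex $x$ of $H$, the required reservoir of extensions whose images remain $3d$-separated; this is exactly what the proof of \cite[Theorem 7.4]{K3} uses. No new combinatorial argument is needed beyond this verification: the content of the theorem is that \cite{K3} was designed to handle precisely this kind of locally-constrained coloured directed decomposition problem, and the translation performed in this section reduces our task to an application of it.
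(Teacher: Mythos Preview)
Your proposal is correct and matches the paper's approach exactly: the paper states this theorem without proof, introducing it as a ``special case of \cite[Theorem 7.4]{K3}'', with the preceding subsections having already carried out the translation of divisibility, regularity and extendability into the functional framework. Your write-up is simply a slightly more explicit version of the same reduction.
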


\section{The algorithm} \label{sec:alg}

Suppose we are in the setting of Theorem \ref{main}:
we are given a $(\eps,t)$-typical $\aA n$-regular digraph $G$ 
on $n$ vertices, where $n^{-1} \ll \eps \ll t^{-1} \ll \aA$,
and we need to decompose $G$ into some given
family $\mc{F}$ of $\aA n$ oriented one-factors on $n$ vertices.
In this section we present an algorithm that partitions 
almost all of $G$ into two digraphs $G_1$ and $G_2$, 
and each factor $F_w$ into subfactors $F^1_w$ and $F^2_w$,
and also sets up auxiliary digraphs $J_1$ and $J_2$, such that 
(i) an approximate wheel decomposition of $J_2$
gives an approximate decomposition of $G_2$
into partial factors that are roughly $\{F^2_w\}$,
(ii) given the approximate decomposition of $G_2$,
we can set up (via a small additional greedy embedding)
the remaining problem to be finding an exact decomposition 
of a small perturbation $G'_1$ of $G_1$ into partial factors
that are roughly $\{F^1_w\}$, corresponding to a wheel
decomposition of a small perturbation $J'_1$ of $J_1$.
For most of the section we will describe and motivate 
the algorithm; we then conclude with the formal statement.

We fix additional parameters with hierarchy
\begin{equation} \label{hierarchy}
n^{-1} \ll \eps \ll t^{-1} \ll K^{-1} \ll d^{-1}
\ll \eta \ll s^{-1} \ll L^{-1} \ll \aA.
\end{equation}
For convenient reference later, we also make some comments 
here regarding the roles of these additional parameters:
$\eta$ will be used to bound the number of vertices embedded greedily,
we consider a cycle `long' if it has length at least $K$,
and the cyclic intervals used to define the special colour $K$
will have sizes $d_i = d/(2s)^{i-1}$ with $i \in [2s+1]$.
By the reductions in section \ref{sec:red}, we will be able 
to assume that we are in one of the following cases:
 
Case $K$: each $F \in \mc{F}$ has at least $n/2$ vertices 
in cycles of length at least $K$,  

Case $\ell^*$ with $\ell^* \in [3,L]$: each $F \in \mc{F}$ 
has $\ge L^{-3} n$ cycles of length $\ell^*$.

We write $\mc{F}=(F_w: w \in W)$, so $|W| = \aA n$.
We partition each $F_w$ as $F^1_w \cup F^2_w$ as follows.
In Case $\ell^*$ we let $F^1_w$ consist of exactly
$L^{-3} n$ cycles of length $\ell^*$ 
(and then $F^2_w = F_w \sm F^1_w$).
In Case $K$ we choose $F^1_w$ 
with $|F^1_w| - n/2 \in [0,2K]$
to consist of some cycles of length at least $K$
and at most one path of length at least $K$.
To see that this is possible,
consider any induced subgraph $F'_w$ of $F_w$ 
with $|F'_w|=n/2+K$ obtained by greedily adding cycles 
of length at least $K$ until the size is at least $n/2 + K$, 
and then deleting a (possibly empty) path from one cycle.
Let $P_1$ and $P_2$ denote the two paths 
of the (possibly) split cycle, where $P_1 \in F'_w$.
If $|P_1|, |P_2| \ge K$ we let $F^1_w=F'_w$.
If $|P_1| < K$ we let $F^1_w=F'_w \sm P_1$.
If $|P_2| < K$ we let $F^1_w=F'_w \cup P_2$.
In all cases, $F^1_w$ is as required.

The algorithm is randomised, so we start by defining 
probability parameters. The graphs $G_1$ and $G_2$
are binomial random subdigraphs of $G$ of sizes 
that are slightly less than one would expect
(we leave space for a greedy embedding that will occur
between the approximate decomposition step
and the exact decomposition step).
For each $w \in W$ we let
$p^g_w = (1-\eta) n^{-1}|F^g_w| + n^{-.2}$
(so $1-\eta \le p^1_w+p^2_w \le 1-L^{-3}\eta$). 
We let $p_g = |W|^{-1}\sum_{w \in W} p^g_w$ 
(so $1-\eta \le p_1+p_2 \le 1-L^{-3}\eta$).
For each arc $e$ of $G$ independently we will let 
$\mb{P}(e \in G_g) = p_g$ for $g \in [2]$.

We introduce further probabilities corresponding
to the cycle distributions of each $F^g_w$.
For $c<K$ we write $q^g_{w,c} n$ for the number 
of cycles of length $c$ in $F^g_w$
and let $p^g_{w,c} = (1-\eta) q^g_{w,c}$.
We define $p^g_{w,K}$ so that $F^g_w$ 
has about $8p^g_{w,K}n$ vertices
not contained in cycles of length $<K$
(for technical reasons, we also ensure that
each $p^g_{w,K} \ge n^{-.2}$, which explains
the term $n^{-.2}$ in the definition of $p^g_w$).
Averaging over $W$ gives the corresponding probabilities
that describe the uses of arcs in each $G_g$:
we let $p^g_c = |W|^{-1} \sum_{w \in W} p^g_{w,c}$
so that for each $c<K$, the number of edges in $G_g$
allocated to cycles of length $c$ will be roughly
$\sum_{w \in W} cp^g_{w,c} n = |W| cp^g_c n
= \aA cp^g_c n^2 = cp^g_c |G| + O(n)$,
and similarly, roughly $8p^g_K |G| + O(n)$ arcs
in $G_g$ will be allocated to long cycles.

The remainder of the algorithm is concerned
with the auxiliary digraphs $J_g$.
For any colour $c$, we let $J^c_g$ 
denote the arcs of colour $c$ in $J_g$.
We also write $J^*_g = \cup_{c \ne K} J^c_g$.
First we consider arcs within $J_g[V]$. 
Throughout the paper, we fix a  cyclic order on $V$, 
which we choose uniformly at random. 
For $v \in V$, let $v^+$ denote the successor of $v$
and $v^-$ denote the predecessor of $V$.
Arcs of the special colour $K$ should correspond 
to $1/8$ of the factor arcs that are not in short cycles,
so should form a graph of density about $p^g_K$.
For each arc $\ova{xy} \in G_g$ not of the form
$\ova{zz}^+$ (to avoid loops, we don't mind double edges) 
independently we assign $\ova{xy}$ to 
colour $K$ with probability $p^g_K/p_g$
or colour $0$ with probability $p^g_*/p_g$
(where $p^g_K + p^g_*$ is slightly less than $p_g$).
If $\ova{xy}$ has colour $K$ we add
$\ova{xy}^-$ to $J^K_g$.

Now we consider $J_g[V,W]$.
These arcs are all directed from $V$ to $W$.
For each $w \in W$ and cycle length $c<K$, 
there should be about $cp^g_{w,c} n$
vertices available for the $c$-cycles in $F^g_w$.
The colouring of $\wC$ requires $1/c$-fraction 
of these to be joined to $w$ in colour $c$, so we 
should have $N^-_{J^c_g}(w) \approx p^g_{w,c} n$.
Similarly, there should be about $8p^g_{w,K} n$
vertices available for vertices of $F^g_w$ not in
short cycles, and the colouring of $\wK$ requires 
$1/8$ of these to be joined to $w$ in colour $c$, 
so we should have $N^-_{J^K_g}(w) \approx p^g_{w,K} n$.
These arcs are chosen randomly, not independently,
but according to a random collection of intervals,
of sizes $d_i = d/(2s)^{i-1}$ with $i \in [2s+1]$,
where $d$ is small enough that the resulting graph 
is roughly typical, but large enough to give a good
upper bound on the number of vertices in long cycles
that become unused when they are chopped up into paths,
and so need to be embedded greedily.

These intervals must be chosen quite carefully,
because of the following somewhat subtle constraint.
Recall that in Case $K$ we will reduce to a path factor
problem in some subdigraph $H$ of $G$. This can only have
a solution if each vertex $x$ has degree 
$d_H^\pm(x) = d_2(x) - d_{\pm}(x)$, where $d_2(x)$ 
is the number of path factors that will use $x$
and $d_-(x)$ (respectively $d_+(x)$)
is the number of these 
in which $x$ is the start (respectively end).
The path factors will be obtained from 
a set of arc-disjoint $\wK$'s, where for each $w \in W$,
its colour $K$ neighbourhood is given by a set of intervals
$([x^w_i,(y^w_i)^-]: i \in I_w)$, so its $\wK$'s will define 
paths from $x^w_i$ to $y^w_i$. Thus in the auxiliary
digraph $J$, the degree of $x$ into $W$ must be
$d^+_J(x,W) = d_2(x) - d'_1(x)$, where $d'_1(x)$
is the number of path factors in which
$x$ is some successor $(y^w_i)^+$.
To relate these two formulae,
we note that a wheel decomposition of $J$ requires 
$d^+_J(x,W)=d^+_J(x,V)=d^-_J(x,V)$
and $d^+_{J^K}(x,W)=d^-_{J^K}(x,V)$,
and that in the twisting construction,
$d^-_{J^K}(x^-,V)$ arcs of $H$ at $x$
are not counted by $d^-_J(x,V)$,
whereas $d^-_{J^K}(x,V)$ arcs of $H$ 
not at $x$ are counted by $d^-_J(x,V)$.
Writing $\DD(x) = d^-_{J^K}(x^-,V) - d^-_{J^K}(x,V)
= d^+_{J^K}(x^-,W) - d^+_{J^K}(x,W)$,
we deduce $d_H^+(x)=d^+_J(x,V)$ and
$d_H^-(x) = d^-_J(x,V) + \DD(x)$, 
so we need $\DD(x) = d'_1(x) - d_+(x)$
and $d_1'(x)=d_-(x)$.
So $\DD(x)=d_-(x)-d_+(x)$.
We will ensure that both sides are always $0$
(taking $H$ equal to the digraph $G'_1$ in which we need
to solve the path factor problem), i.e.\ 
\begin{enumerate}
\item every vertex is used equally often
as a startpoint or as a successor of an interval, and 
\item all vertices appear in some interval 
for the same number of factors.
\end{enumerate}

To achieve this, we identify $V$ with $[n]$ under the 
natural cyclic order, and select our intervals from
canonical sets $\mc{I}^i_j$, $i \in [2s+1]$, $j \in [d_i]$,
where each $\mc{I}^i_j$ is a partition of $[n]$
into $n/d_i \pm 1$ intervals of length at most $d_i$,
we have $\mc{I}^i_j \cap \mc{I}^i_{j'} = \es$ for $j \ne j'$,
and for each $i$, every $v \in [n]$ occurs exactly once as
a startpoint of some interval in $\mc{I}^i = \cup_j \mc{I}^i_j$,
and also exactly once as a successor of some interval in $\mc{I}^i$.
The two conditions discussed in the previous paragraph will then
be satisfied if there are numbers $t_i$, $i \in [2s+1]$ such that
every interval in $\mc{I}^i$ is used by exactly $t_i$ factors.
Each $w$ will select intervals from some $\mc{I}^{i(w)}_{j(w)}$, 
and these intervals must be non-consecutive, 
so that the paths do not join up into longer paths.
This explains why we use several different interval sizes: 
if we only used one size $d$ then a pair of vertices in $V$ 
at cyclic distance $d$ could never be both used for the same factor, 
and so we would be unable to satisfy the conditions of the
wheel decomposition results in section \ref{sec:wheel}.

Now we describe how factors choose intervals.
For each $w \in W$, we start by independently choosing 
$i=i(w) \in [2s+1]$ and $j=j(w) \in [d_i]$ uniformly at random.
Given $i$ and $j$, we activate each interval in $\mc{I}^i_j$
independently with probability $1/2$, and select any interval $I$ 
such that $I$ is activated, and its two neighbouring 
intervals $I^\pm$ are not activated.
We thus obtain a random set of non-consecutive intervals where 
each interval appears with probability $1/8$ (not independently).
We form random sets of intervals $\mc{X}^g_w$ where each
interval selected for $w$ is included in $\mc{X}^g_w$ 
independently with probability $8p^g_{w,K}$ (and is included 
in at most one of $\mc{X}^1_w$ or $\mc{X}^2_w$).
Thus, given $w \in W_i := \{w': i(w')=i\}$, any interval 
$I \in \mc{I}^i$ appears in $\mc{X}^g_w$ with probability
$p^g_{w,K}/d_i$. The events $\{I \in \mc{X}^g_w\}$ for $w \in W_i$ 
are independent, so whp about $\sum_{w \in W_i} p^g_{w,K}/d_i$
factors use $I$.

Our final sets of intervals $\mc{Y}^g_w$ are obtained from
$\mc{X}^g_w$ by removing a small number of intervals so that
every interval in $\mc{I}^i$ is used exactly $t^g_i$ times,
where $t^g_i$ is about $\sum_{w \in W_i} p^g_{w,K}/d_i$.
(We only need this property when $g=1$, but for uniformity
of the presentation we do the same thing for $g=2$.)
These intervals determine $J^K_g[V,W]$: we let
$N^-_{J^K_g}(w) = Y^g_w := \bigcup \mc{Y}^g_w$,
i.e.~the subset of $V$ which is 
the union of the intervals in $\mc{Y}^g_w$.
As each $x$ is the startpoint of exactly one interval
in $\mc{I}^i$ it occurs as the startpoint of an interval
for exactly $t_g := \sum_i t^g_i$ factors; the same
statement holds for successors of intervals.
As each $x \in V$ appears in exactly one interval
in each $\mc{I}^i_j$ we deduce $d^+_{J^K_g}(x,W)
= \sum_{i=1}^{2s+1} \sum_{j=1}^{d_i} t^g_i 
\approx \sum_{w \in W} p^g_{w,K} = |W| p^g_K$.

The other arcs of $J$ incident to $w$ will come from
$\ov{Y}_w := V \sm \big( Y^1_w \cup Y^2_w 
\cup (Y^1_w)^+ \cup (Y^2_w)^+ \big)$, 
where $(Y^g_w)^+$ is the set of successors of intervals 
in $\mc{Y}^g_w$ (these vertices are endpoints of paths so
should be avoided by the short cycles, and also by the $7/8$ 
of the paths not specified by the intervals).
We define $\ov{J}[V,W]$ by $N^-_{\ov{J}}(w)=\ov{Y}_w$.
For any $x \in V$ we will have
$\mb{P}(x \in Y_w^g) \approx
\mb{P}(x \in X_w^g) = p_{w,K}^g$
and $\mb{P}(x \in Y_w^g \mid w \in W_i) \approx
\mb{P}(x \in X_w^g \mid w \in W_i) = p_{w,K}^g/d_i$,
so $|\ov{Y}_w| \approx \ov{p}_w n$,
where $\ov{p}_w = 1 - \tfrac{d_i+1}{d_i} (p_{w,K}^1+p_{w,K}^2)$.

In $J^*_g = J_g \sm J^K_g$ we require about $p^g_{w,*} n$ 
such arcs, where $p^g_{w,*} := p^g_w - p^g_{w,K}$,
and of these, for each cycle length $c<K$
we require about $p^g_{w,c} n$ arcs of colour $c$.
For each $x \in \ov{Y}_w$ independently we include
the arc $xw$ in at most one of the $J^*_g$ 
with probability $p^g_{w,*}/\ov{p}_w$,
which is a valid probability as
$p^1_{w,*} + p^2_{w,*} 
= 1 - L^{-3}\eta - p_{w,K}^1 - p_{w,K}^2 < \ov{p}_w$.
Then we give each $xw \in J^*_g[V,W]$
colour $c$ with probability $p^g_{w,c}/p^g_{w,*}$. 
In particular, $xw$ in $J^*_g$ is coloured $0$ 
with probability $p^g_{w,0}/p^g_{w,*}$, where
$p^g_{w,0} := p^g_{w,*} - \sum_{c=3}^{K-1} p^g_{w,c}$.

\subsection{Formal statement of the algorithm}

The input to the algorithm consists of
an $\aA n$-regular digraph $G$ on $V$,
a family $(F_w: w \in W)$ of $\aA n$ oriented one-factors, 
each partitioned as $F_w = F^1_w \cup F^2_w$,
and parameters satisfying
$n^{-1} \ll \eps \ll t^{-1} \ll K^{-1} 
\ll d^{-1} \ll \eta \ll s^{-1} \ll L^{-1} \ll \aA$.
We identify $V$ with $[n]$ according 
to a uniformly random bijection and
adopt the natural cyclic order on $[n]$:
each $x \in [n]$ has successor $x^+=x+1$ (where $n+1$ means 
$1$) and predecessor $x^-=x-1$ (where $0$ means $n$).
Let $d_i=d/(2s)^{i-1}$ for $i \in [2s+1]$.
We write $n=r_id_i+s_i$ with $r_i \in \mb{N}$
and $0 \le s_i < d_i$, and let
\[ P^i_j = \left\{ \begin{array}{ll}
\{ kd_i+j: 0 \le k \le r_i \} 
& \text{if } j \in [s_i], \\
\{ kd_i+j: 0 \le k \le r_i-1 \} 
& \text{if } j \in [d_i] \sm [s_i].
\end{array} \right. \] 
For each $i \in [s+1]$ and $j \in [d_i]$
we define a partition of $[n]$ into a family
of cyclic intervals $\mc{I}^i_j$ defined 
as all $[a,b^-]$ where $a \in P^i_j$ and $b$ is
the next element of $P^i_j$ in the cyclic order.
(So $|\mc{I}^i_j|=n/d_i \pm 1$,
each $I \in \mc{I}^i_j$ has $|I| \le d_i$, and 
$\mc{I}^i_j \cap \mc{I}^i_{j'} = \es$ for $j \ne j'$.)
We let $\mc{I}^i = \cup_{j \in [d_i]} \mc{I}^i_j$.
(So for every $v \in [n]$, 
exactly one $[a,b^-] \in \mc{I}^i$ has $a=v$,
and exactly one $[a,b^-] \in \mc{I}^i$ has $b=v$.)
Each $w \in W$ will be assigned $i(w) \in [2s+1]$.
For $c<K$ write $q^g_{w,c} n$ for the number 
of cycles of length $c$ in $F^g_w$. Let
\begin{gather*}
p^g_w = (1-\eta) n^{-1}|F^g_w| + n^{-.2}, \qquad
p^g_{w,c} = (1-\eta) q^g_{w,c}
\ \text{ for } 3 \le c < K, \qquad
p^g_{w,K}  = \tfrac{1}{8} \left( p^g_w 
- \Ss_{c=3}^{K-1} cp^g_{w,c} \right), \\
p^g_{w,*} = p^g_w - p^g_{w,K}, \qquad
p^g_{w,0} = p^g_{w,*} - \Ss_{c=3}^{K-1} p^g_{w,c},  \qquad
p_{w,K} = p^1_{w,K} + p^2_{w,K}, \\
\ov{p}_w = 1 - \tfrac{d_{i(w)}+1}{d_{i(w)}} p_{w,K}, \qquad
p_g  = |W|^{-1} \Ss_{w \in W} p^g_w, \qquad
p^g_c  = |W|^{-1} \Ss_{w \in W} p^g_{w,c}
\ \text{ for } c \in [0,K] \cup \{*\}.
\end{gather*}

We complete the algorithm by applying the 
following subroutines INTERVALS and DIGRAPH.

\vspace{-0.3cm} \begin{center} 
INTERVALS \end{center} \vspace{-0.3cm}

\begin{enumerate}
\item For each $w \in W$ independently choose
$i(w) \in [2s+1]$ and $j(w) \in [d_{i(w)}]$ 
uniformly at random. Let $W_i = \{ w: i(w)=i \}$.
\item For each $w \in W$, let $\mc{A}_w$ include each interval
of $\mc{I}^{i(w)}_{j(w)}$ independently with probability $1/2$. \\
Let $\mc{S}_w$ consist of all $I \in \mc{A}_w$ such that
both neighbouring intervals $I^\pm$ of $I$ are not in $\mc{A}_w$.
\item Let $\mc{X}^g_w$, $g \in [2]$ be disjoint
with $\mb{P}(I \in \mc{X}^g_w)=8p^g_{w,K}$ 
independently for each $I \in \mc{S}_w$.
\item Let $t^g_i = \min \{ |\mc{X}^g(I)|: I \in \mc{I}^i \}$,
where $\mc{X}^g(I) := \{w \in W_i: I \in \mc{X}^g_w\}$,
and obtain $\mc{Y}^g_w \sub \mc{X}^g_w$ by deleting 
each $I \in \mc{I}^i$, $i \in [2s+1]$ from
$|\mc{X}^g(I)|-t^g_i$ sets $\mc{X}^g_w$ with $w \in \mc{X}^g(I)$, 
independently uniformly at random.
Write $\mc{Y}^g(I) := \{w \in W_i: I \in \mc{Y}^g_w\}$
(so $|\mc{Y}^g(I)|=t^g_i$ for $I \in \mc{I}^i$). 
\end{enumerate}

\vspace{-0.3cm} \begin{center} 
DIGRAPH \end{center} \vspace{-0.3cm}

\begin{enumerate}
\item Let $G_1$ and $G_2$ be arc-disjoint  with 
$\mb{P}(\ova{e} \in G_g) = p_g$ independently for each arc $\ova{e}$ of $G$.
\item For each $g \in [2]$ and $\ova{xy} \in G_g$ independently,
if $\ova{xy}$ is $\ova{zz}^-$ or $\ova{zz}^+$ for some $z$ 
add $\ova{xy}$ to $J^0_g$, otherwise choose exactly one of
$\mb{P}(\ova{xy} \in J^0_g) = p^g_*/p_g$ or
$\mb{P}(\ova{xy}^- \in J^K_g) = p^g_K/p_g$.
\item For each $w \in W$,
add $\ova{xw}$ to $J^K_g$ 
for each $x \in Y^g_w := \bigcup \mc{Y}^g_w$,
and add $\ova{xw}$ to $\ov{J}$
for each $x \in \ov{Y}_w := V \sm ( Y^1_w \cup Y^2_w 
\cup (Y^1_w)^+ \cup (Y^2_w)^+ )$.
\item For each arc $\ova{xw}$ of $\ov{J}[V,W]$ independently,
add $\ova{xw}$ to $J^*_g[V,W]$ with probability $p^g_{w,*}/\ov{p}_w$,
and give it exactly one colour $c \ne K$ (including $0$) 
with probability $p^g_{w,c}/p^g_{w,*}$.
\end{enumerate}

We conclude this section by recording some estimates 
on the algorithm parameters used throughout the paper.
\begin{gather*}
\text{In Case } K, \text{ all } 
|F^g_w| = n/2 \pm 2K, \quad p^1_w, p^2_w  > .49,
  \quad p^1_{w,K} = p^1_w/8 > 1/17, \\ 
 p^1_{w,c}=0 \text{ for } c \in [3,K-1], 
 \quad  p^1_{w,*} = p^1_{w,0} = 7p^1_w/8 > 1/3 
\quad  \text{ and } \
p^2_{w,*} \ge p^2_{w,0} \ge 2p^2_w/3 > 1/4. \\
\text{In Case  } \ell^*, \text{ all }
|F^1_w| = \ell^* L^{-3} n, 
\quad |F^2_w| = n - \ell^* L^{-3} n, 
\quad p^1_w  > (1-\eta)\ell^*L^{-3} > 2L^{-3}, \\ 
p^2_w > 1 - 2L^{-2} > .9, 
\quad p^1_{w,\ell^*} = p^1_w/\ell^* > .9L^{-3}, 
\quad p^1_{w,K}=n^{-.2}/8, 
\quad p^1_{w,c}=0 \text{ for } c \in [3,K-1] \sm \{\ell^*\}, \\
p^1_{w,*}>2L^{-3}, 
\quad p^1_{w,0} \ge 2p^1_{w,*}/3 > L^{-3} 
\quad \text{ and } \
p^2_{w,*} \ge p^2_{w,0} \ge 2p^2_w/3 > .6.\\
\text{In both cases,  } p^2_{w,K} \geq n^{-.2}/8.
\end{gather*}

\section{Analysis I: intervals} \label{sec:int}

In this section we analyse the families of intervals chosen 
by the INTERVALS subroutine in section \ref{sec:alg};
our goal is to establish various regularity and extendability
properties of $J^K_g[V,W]$ and $\ov{J}_g[V,W]$
(which are defined in step (iii) of DIGRAPH
but are completely determined by INTERVALS).
We also deduce some corresponding properties that follow 
from these under the random choices in DIGRAPH. 
Before starting the analysis, we state some standard results
on concentration of probability that will be used throughout
the remainder of the paper. We use the following classical 
inequality of Bernstein (see e.g.\ \cite[(2.10)]{BLM})
on sums of bounded independent random variables.
(In the special case of a sum of independent indicator
variables we will simply refer to the `Chernoff bound'.)

\begin{lemma} \label{bernstein}
Let $X = \sum_{i=1}^n X_i$ be a sum of
independent random variables with each $|X_i|<b$.

Let $v = \sum_{i=1}^n \mb{E}(X_i^2)$.
Then $\mb{P}(|X-\mb{E}X|>t) 
< 2e^{-t^2/2(v+bt/3)}$.
\end{lemma}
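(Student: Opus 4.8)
\textbf{Proof proposal for Lemma~\ref{bernstein}.}
The plan is to prove the classical Bernstein inequality by the standard exponential-moment (Cram\'er--Chernoff) method, bounding the moment generating function of each $X_i$ by exploiting both the variance $\mb{E}(X_i^2)$ and the uniform bound $|X_i|<b$. First I would assume without loss of generality that each $X_i$ has been centred, i.e.\ $\mb{E}X_i=0$ (replacing $X_i$ by $X_i-\mb{E}X_i$ only doubles the bound $b$ to $2b$ in the worst case, but one can instead simply carry the mean through the computation; cleanest is to treat centred variables and note the hypotheses are stated for that case up to harmless constants). Then for $\lambda>0$ I would estimate $\mb{E}\,e^{\lambda X_i}$ using the pointwise inequality $e^x \le 1 + x + \tfrac{x^2}{2}\,\psi(x)$ where $\psi$ is increasing, or more directly the bound
\[
\mb{E}\,e^{\lambda X_i} \le \exp\!\left( \mb{E}(X_i^2) \cdot \frac{e^{\lambda b}-1-\lambda b}{b^2} \right),
\]
which follows by expanding $e^{\lambda X_i} = 1 + \lambda X_i + \sum_{k\ge 2}\lambda^k X_i^k/k!$, using $\mb{E}X_i=0$, bounding $|X_i|^k \le b^{k-2}X_i^2$ for $k\ge 2$, and then applying $1+u\le e^u$.

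Next I would multiply these bounds over $i=1,\dots,n$ using independence, obtaining
\[
\mb{E}\,e^{\lambda X} \le \exp\!\left( v \cdot \frac{e^{\lambda b}-1-\lambda b}{b^2} \right),
\]
with $v = \sum_i \mb{E}(X_i^2)$. Markov's inequality then gives, for any $\lambda>0$,
\[
\mb{P}(X>t) \le \exp\!\left( -\lambda t + v\,\frac{e^{\lambda b}-1-\lambda b}{b^2} \right).
\]
The final step is to optimise over $\lambda$. The exact optimiser is $\lambda = \tfrac{1}{b}\log(1+bt/v)$, yielding the sharp form $\mb{P}(X>t)\le \exp(-\tfrac{v}{b^2}h(bt/v))$ with $h(u)=(1+u)\log(1+u)-u$; then one uses the elementary inequality $h(u)\ge \tfrac{u^2}{2(1+u/3)}$ to reach the stated $\exp(-t^2/(2(v+bt/3)))$. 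Applying the same argument to $-X$ and adding the two tail bounds produces the factor $2$ in $\mb{P}(|X-\mb{E}X|>t) < 2e^{-t^2/2(v+bt/3)}$.

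The only genuinely non-routine point is verifying the scalar inequality $h(u)\ge u^2/(2(1+u/3))$ for all $u\ge 0$; this is a one-variable calculus fact (compare derivatives at $0$, or clear denominators and check the resulting polynomial/transcendental inequality term by term), and since the statement cites \cite[(2.10)]{BLM} for exactly this inequality, I would simply quote that reference rather than reproduce the estimate. Everything else — centring, the per-variable MGF bound, tensorisation, and the Markov step — is mechanical, so I expect no real obstacle; the proof is essentially a verification that the hypotheses as stated match the form in which Bernstein's inequality is catalogued in \cite{BLM}.
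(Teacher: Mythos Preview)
The paper does not give a proof of this lemma at all: it is stated as a classical inequality and simply referenced to \cite[(2.10)]{BLM}. Your outline is the standard Cram\'er--Chernoff derivation of Bernstein's inequality and is correct; in particular the per-variable MGF bound, tensorisation, Markov step, and the reduction via $h(u)\ge u^2/(2(1+u/3))$ are exactly how the cited reference obtains (2.10). So there is nothing to compare --- you have supplied a proof where the paper only supplies a citation.

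One small remark: the lemma as stated does not assume the $X_i$ are centred, and your parenthetical about centring is a little loose (recentring can turn the bound $b$ into $2b$, which would put $2bt/3$ rather than $bt/3$ in the exponent). In \cite{BLM} the inequality is stated for centred variables, and in every application in the paper the summands are either already centred or the slack in the exponent is irrelevant, so this is harmless; but if you want the statement exactly as written you should either assume $\mb{E}X_i=0$ or note that $v=\sum_i \mb{E}(X_i^2)\ge \sum_i \mathrm{Var}(X_i)$ absorbs only part of the discrepancy.
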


We also use McDiarmid's bounded differences inequality,
which follows from Azuma's martingale inequality
(see \cite[Theorem 6.2]{BLM}). 

\begin{defn} \label{def:vary}
Suppose $f:S \to \mb{R}$ where $S = \prod_{i=1}^n S_i$
and $b = (b_1,\dots,b_n) \in \mb{R}^n$.
We say that $f$ is \emph{$b$-Lipschitz} if for any 
$s,s' \in S$ that differ only in the $i$th coordinate
we have $|f(s)-f(s')| \le b_i$. 
We also say that $f$ is \emph{$v$-varying} 
where $v=\sum_{i=1}^n b_i^2/4$.
\end{defn}

\begin{lemma} \label{azuma}
Suppose $Z = (Z_1,\dots,Z_n)$ is a sequence 
of independent random variables,
and $X=f(Z)$, where $f$ is $v$-varying.
Then $\mb{P}(|X-\mb{E}X|>t) \le 2e^{-t^2/2v}$.
\end{lemma}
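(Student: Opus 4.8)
The plan is to derive this (which is exactly McDiarmid's bounded differences inequality, \cite[Theorem 6.2]{BLM}) from the Azuma--Hoeffding martingale inequality; I sketch the standard argument. Let $b=(b_1,\dots,b_n)$ be a Lipschitz vector for $f$, so $v=\sum_{i=1}^n b_i^2/4$, write $S=\prod_{i=1}^n S_i$ for the range of $Z$, and set $\mu=\mb{E}X$. For $0 \le k \le n$ define
\[ X_k := \mb{E}\brak{f(Z) \mid Z_1,\dots,Z_k}, \]
so that $X_0=\mu$ is constant, $X_n=X$, and $(X_k)_{k=0}^n$ is a martingale for the filtration generated by $(Z_1,\dots,Z_k)$. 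Put $D_k := X_k-X_{k-1}$, so $X-\mu=\sum_{k=1}^n D_k$ and $\mb{E}\brak{D_k \mid Z_1,\dots,Z_{k-1}}=0$.

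The first step is the bounded differences property: conditioned on any values of $Z_1,\dots,Z_{k-1}$, the variable $D_k$ lies in an interval of length at most $b_k$. Using independence of the $Z_i$, write $X_k=g_k(Z_1,\dots,Z_k)$ where $g_k(z_1,\dots,z_k):=\mb{E}\, f(z_1,\dots,z_k,Z_{k+1},\dots,Z_n)$. Since $f$ is $b$-Lipschitz in its $k$th coordinate, $|g_k(\dots,z_{k-1},z)-g_k(\dots,z_{k-1},z')| \le b_k$ for all $z,z'$, so for fixed $Z_1,\dots,Z_{k-1}$ the possible values of $X_k$ lie in an interval $I$ of length $\le b_k$; and $X_{k-1}=\mb{E}\brak{X_k \mid Z_1,\dots,Z_{k-1}}$, being a conditional average of $X_k$, also lies in $I$, which gives the claim.

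Next I would bound conditional moment generating functions by Hoeffding's lemma: if $Y$ has $\mb{E}Y=0$ and takes values in an interval of length $\ell$, then $\mb{E}(e^{\lambda Y}) \le e^{\lambda^2\ell^2/8}$ for every $\lambda$. Applying this conditionally on $Z_1,\dots,Z_{k-1}$ (legitimate by the previous two displays) gives $\mb{E}\brak{e^{\lambda D_k} \mid Z_1,\dots,Z_{k-1}} \le e^{\lambda^2 b_k^2/8}$. Conditioning on $Z_1,\dots,Z_{n-1}$ and peeling off the factors one at a time via the tower property (each $D_1,\dots,D_{n-1}$ being a function of $Z_1,\dots,Z_{n-1}$) yields
\[ \mb{E}\, e^{\lambda(X-\mu)} = \mb{E}\, e^{\lambda\sum_{k=1}^n D_k} \le \prod_{k=1}^n e^{\lambda^2 b_k^2/8} = \exp\brac{\tfrac{\lambda^2}{8}\sum_{k=1}^n b_k^2} = e^{\lambda^2 v/2}. \]
Applying Markov's inequality to $e^{\lambda(X-\mu)}$ with $\lambda>0$ gives $\mb{P}(X-\mu>t) \le e^{-\lambda t+\lambda^2 v/2}$, and the choice $\lambda=t/v$ gives $\mb{P}(X-\mu>t) \le e^{-t^2/2v}$. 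Since $-f$ is also $b$-Lipschitz, hence $v$-varying with the same $v$, the same bound applies to $\mb{P}(\mu-X>t)$, and a union bound over the two tails gives $\mb{P}(|X-\mu|>t) \le 2e^{-t^2/2v}$, as required.

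I expect the only delicate point to be the bounded differences step — specifically, justifying via independence that the expectation over $Z_{k+1},\dots,Z_n$ may be taken inside, and that $X_{k-1}$, being a conditional average of $X_k$ given $Z_1,\dots,Z_{k-1}$, necessarily falls in the same length-$b_k$ window; everything after that (Hoeffding's lemma, the tower property, and the Chernoff-type optimisation in $\lambda$) is routine. Alternatively, since the statement here is a verbatim special case of \cite[Theorem 6.2]{BLM} once the definition of $v$-varying is unpacked, one may simply cite that result directly.
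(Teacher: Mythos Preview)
Your proposal is correct; the argument you sketch is the standard derivation of McDiarmid's inequality from Azuma--Hoeffding. The paper itself does not prove this lemma at all: it simply states it and cites \cite[Theorem 6.2]{BLM}, which is precisely the alternative you mention at the end of your proposal.
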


The next lemma records various regularity and extendability
properties of $J^K_g[V,W]$ and $\ov{J}_g[V,W]$.
We recall that each $N^-_{J^K_g}(w) = Y^g_w$ 
and $N^-_{\ov{J}_g}(w) = \ov{Y}_w$, and also
our notation for common neighbourhoods, 
e.g.\ $N^-_{J^K_g}(R) = \bigcap_{w \in R} N^-_{J^K_g}(w)$
in statement (iv). Statements (iv) and (v)
will be applied to $n^{O(1)}$ choices 
of set $U$ or function $h$, so their conclusions apply 
whp simultaneously to all these choices
(recalling our convention that `whp' refers to events
with exponentially small failure probability). 
For $x \in V$ we write
$t^-_g(x)$ or $t^+_g(x)$ for the number of $w$ 
such that $x$ is the startpoint or successor
of an interval in $\mc{Y}^g_w$.
We also use the separation property
from Definition \ref{def:sep}.

\begin{lemma} \label{lem:int}
Let $g \in [2]$, $U \sub V$ and $h:W \to \mb{R}$ 
with each $|h(w)|<n^{.01}$. Then whp:
\begin{enumerate}
\item 
$|\mc{Y}^g(I)| = t^g_i 
= \tfrac{|W| p^g_K}{(2s+1)d_i} \pm n^{.51}$ 
for all $I \in \mc{I}^i$, $i \in [2s+1]$.
\item $d^+_{J^K_g}(x,W) = |W| p^g_K \pm n^{.52}$
and $t^\pm_g(x) = t_g := \sum_i t^g_i$
for each $x \in V$. 
\item $d^-_{J^K_g}(w) = |Y^g_w| = p^g_{w,K} n \pm n^{3/4}$ and
$d^-_{\ov{J}}(w) = |\ov{Y}_w| = \ov{p}_w n \pm n^{3/4}$
for all $w \in W$.
\item For any disjoint $R,R' \sub W$ of sizes $\le s$ we have
\[ \bsize{U \cap N^-_{J^K_g}(R) \cap N^-_{\ov{J}}(R')}
= |U| \prod_{w \in R} p^g_{w,K}
\prod_{w \in R'} \ov{p}_w \pm 3sn^{3/4}. \]
\item Consider $H := 
\sum \bracc{ h(w): w \in N^+_{J^K_g}(S) \cap N^+_{\ov{J}}(S') }$
for disjoint $S,S' \sub V$ of sizes $\le s$.
\begin{align*}
& \text{If } S \cup S' \text{ is } 3d\text{-separated then } 
H = \sum_{w \in W} (p^g_{w,K})^{|S|} \ov{p}_w^{|S'|} h(w)
 \pm 5sn^{3/4}. \\
& \text{If } (S,S') \text{ is } 3d\text{-separated then } 
 H \ge 2^{-2s} \sum_{w \in W} (p^g_{w,K})^{|S|} h(w).
\end{align*}
\end{enumerate}
\end{lemma}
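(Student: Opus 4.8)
The plan is to prove all five parts by writing the relevant quantities as sums of bounded random variables, applying the concentration inequalities of Lemmas~\ref{bernstein} and~\ref{azuma}, and taking a union bound over the polynomially many choices of $I$, $w$, $R$, $R'$, $U$, $h$, $S$, $S'$ at the end (using our convention on ``whp''). Throughout I would first condition on the outcome of INTERVALS step~(i), i.e.\ on the assignment $w\mapsto(i(w),j(w))$; since $|W_i|=\tfrac{|W|}{2s+1}\pm n^{.51}$ for all $i$ whp by the Chernoff bound, we may assume this holds. The two features that make the analysis tractable are: after this conditioning the randomness of distinct $w$ is coupled \emph{only} through the deletion step INTERVALS(iv); and in each step $\mc{A}_w\to\mc{S}_w$ a single coin influences only the three consecutive intervals $I^-,I,I^+$, hence at most $O(d_{i(w)})$ vertices of $Y^g_w$. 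Write $X^g_w:=\bigcup\mc{X}^g_w$, and for $x\in[n]$ let $I^i_j(x)$ be the unique interval of $\mc{I}^i_j$ containing $x$.

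\textbf{Parts (i) and (ii).} For each $I\in\mc{I}^i$ we have $|\mc{X}^g(I)|=\sum_{w\in W_i}1_{I\in\mc{X}^g_w}$, a sum of independent indicators with $\mb{P}(I\in\mc{X}^g_w)=p^g_{w,K}/d_i$ (combining the probability $1/d_i$ that $j(w)$ selects the right sub-partition, the probability $1/8$ that $I\in\mc{S}_w$, and the probability $8p^g_{w,K}$ that $I$ is then placed in $\mc{X}^g_w$); by the Chernoff bound and $\sum_{w\in W}p^g_{w,K}=|W|p^g_K$, this is $\tfrac{|W|p^g_K}{(2s+1)d_i}\pm n^{.51}$ whp, hence so is $t^g_i=\min_I|\mc{X}^g(I)|$, while $|\mc{Y}^g(I)|=t^g_i$ by construction. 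For (ii) I would use the identity $d^+_{J^K_g}(x,W)=\sum_i\sum_{j\in[d_i]}|\mc{Y}^g(I^i_j(x))|=\sum_i d_it^g_i$, valid because $x$ lies in exactly one interval of $\mc{I}^i_j$ and because $I\in\mc{Y}^g_w\sub\mc{X}^g_w$ forces $j(w)$ to equal the index of $I$; substituting (i) gives $|W|p^g_K\pm(2s+1)dn^{.51}=|W|p^g_K\pm n^{.52}$. Likewise $t^\pm_g(x)=\sum_i|\mc{Y}^g(I)|$ summed over the unique interval of each $\mc{I}^i$ with startpoint (resp.\ successor) $x$, which is $\sum_i t^g_i=t_g$ exactly.

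\textbf{Parts (iii)--(v).} Using $\sum_{I\in\mc{I}^i}|I|=d_in$ and $\mb{P}(I\in\mc{X}^g_w)=p^g_{w,K}/d_i$ gives $\mb{E}|X^g_w|=p^g_{w,K}n$; since each $I\in X^g_w$ is deleted for $w$ with probability $\tfrac{|\mc{X}^g(I)|-t^g_i}{|\mc{X}^g(I)|}$, which by (i) and $p^g_{w,K}\ge n^{-.2}/8$ is $n^{-\Omega(1)}$, the loss satisfies $\mb{E}(|X^g_w|-|Y^g_w|)=o(n^{3/4})$; and $|Y^g_w|$ concentrates within $n^{3/4}$ by Lemma~\ref{azuma}, being $O(d_{i(w)})$-Lipschitz in each of the $O(n/d_{i(w)})$ relevant coins. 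For $\ov Y_w$ I would use that, the intervals of $\mc{Y}^g_w$ being non-consecutive, shifting by one adds exactly one vertex per interval, so $|Y^g_w\cup(Y^g_w)^+|=|Y^g_w|+|\mc{Y}^g_w|$ with $|\mc{Y}^g_w|=|Y^g_w|/d_{i(w)}\pm O(1)$; together with the disjointness of $\mc{X}^1_w,\mc{X}^2_w$ this yields $|\ov Y_w|=\big(1-\tfrac{d_{i(w)}+1}{d_{i(w)}}p_{w,K}\big)n\pm n^{3/4}=\ov p_wn\pm n^{3/4}$. For (iv) and (v) I would neutralise the deletion coupling by first running the argument with $X^g_w$ in place of $Y^g_w$: for fixed $x$ (resp.\ $w$) the indicator then depends only on randomness private to the $w\in R\cup R'$ (resp.\ to $w$), so the per-$w$ probabilities multiply and give the stated main terms $|U|\prod_{R}p^g_{w,K}\prod_{R'}\ov p_w$ (resp.\ $\sum_w(p^g_{w,K})^{|S|}\ov p_w^{|S'|}h(w)$); one then adds back the deletions using the deterministic bound, via (i), that the number of ``relevant'' deletions is $n^{.51+o(1)}$. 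Concentration for (iv) follows by splitting $U$ into $O(d)$ subsets that are each $d$-separated --- so the summands within a subset are conditionally independent --- and applying the Chernoff bound to each; concentration for (v) follows from the Chernoff bound applied directly to $\sum_{w\in W}h(w)1_{\cdots}$, using $|h(w)|<n^{.01}$. For the lower bound in the second part of (v), when $(S,S')$ is $3d$-separated but $S$ is not internally $3d$-separated, two vertices of $S$ may fall in adjacent intervals of $\mc{I}^{i(w)}_{j(w)}$ (which can never both be selected); here I would exploit the full range of sizes $d_i$ to show that, with probability at least $2^{-2s}$ over $(i(w),j(w))$ and the activation coins, all of $S$ lies in pairwise non-adjacent intervals that are then selected, so $\mb{P}(w\in N^+_{J^K_g}(S))\ge 2^{-2s}(p^g_{w,K})^{|S|}$, and then concentrate $H$ as before.

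\textbf{Main obstacle.} I expect the bookkeeping of the two dependency sources to be the crux: the local coupling inside each step $\mc{A}_w\to\mc{S}_w$, and --- more seriously --- the global coupling from the deletion step INTERVALS(iv), which ties together all factors sharing a common interval, all while extracting the \emph{exact} constants $p^g_{w,K}$ and $\ov p_w$ (the latter needing the delicate interval-shift count above rather than the naive $1-2p_{w,K}$). Both are manageable because a single coin touches only $O(d_{i(w)})$ vertices and, by the minimality defining $t^g_i$ together with part~(i), the total number of deletions affecting any bounded family of intervals is only $n^{.51+o(1)}$, comfortably inside every claimed error term.
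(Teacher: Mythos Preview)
Your proposal is essentially correct and follows the paper's approach: replace $Y$ by $X$ to decouple the factors, compute expectations, and concentrate. Two points where you diverge from (or are vaguer than) the paper are worth noting.

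For (iv), the paper does not split $U$ into separated subsets; it simply observes that the count $N$ (with $X$ in place of $Y$) is a $3d$-Lipschitz function of at most $2n$ independent coins (the activations and $\mc{X}^g_w$-inclusions for the $\le 2s$ factors in $R\cup R'$), and applies Lemma~\ref{azuma} directly. This is cleaner than your subset approach, which would need $3d$-separation (not $d$-separation) to make the per-$u$ indicators genuinely independent, and then a slightly more careful error sum over the $O(d)$ pieces.

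For the lower bound in (v), ``exploit the full range of sizes $d_i$'' is the right slogan, but the actual combinatorial step is the crux and you have not given it. The paper's argument is: since $|S|\le s$ and there are $2s+1$ scales, by pigeonhole there is some $i$ such that every pairwise cyclic distance in $S$ is either $<d_{i+1}$ or $\ge d_{i-1}=2sd_i$; for this $i$, one then counts that fewer than $sd_{i+1}=d_i/2$ values of $j\in[d_i]$ are ``bad'' (i.e.\ place two vertices of $S$ in consecutive intervals of $\mc{I}^i_j$), so for at least half the $j$ the events $\{I\in\mc{X}^g_w\}$ are positively correlated and $\mb{P}(S\sub X^g_w\mid i(w)=i,j(w)=j)\ge (p^g_{w,K})^{|S|}$. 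You also need the separate easy bound $\mb{P}(S'\sub\ov{X}_w)\ge 2^{-s}$ (from deactivating the $\le s$ intervals meeting $S'$), together with the independence of $\{S\sub X^g_w\}$ and $\{S'\sub\ov{X}_w\}$ coming from $(S,S')$ being $3d$-separated; your sketch bounds only the $S$ side.
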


Write $X^g_w = \bigcup \mc{X}^g_w$
and $\ov{X}_w = V \sm (X^1_w \cup X^2_w
 \cup (X^1_w)^+ \cup (X^2_w)^+ )$.
In the proof we repeatedly use the observation that
if $S \cup S' \sub V$ is $3d$-separated and $w \in W$,
given $i(w)$ and $j(w)$,
the events $\{ \{x \in X^g_w\}: x \in S \}
\cup \{ \{x \in \ov{X}_w\}: x \in S' \}$ are independent,
as they are determined by disjoint sets 
of random decisions in INTERVALS.
The weaker assumption that $(S,S')$ is $3d$-separated 
only implies independence of $\{S \sub X^g_w\}$
and $\{S' \sub \ov{X}_w\}$. We also note that for any $S,S'$
the events $\{S \sub X^g_w \} \cap \{S' \sub \ov{X}_w\}$
are independent over $w \in W$.

\begin{proof}
For (i), consider any $I \in \mc{I}^i_j$
with $i \in [2s+1]$, $j \in [d_i]$.
For each $w \in W_i$ independently 
we have $\mb{P}(j(w)=j)=1/d_i$,
$\mb{P}(I \in \mc{S}_w \mid j(w)=j)=1/8$,
$\mb{P}(I \in \mc{X}^g_w \mid I \in \mc{S}_w)=8p^g_{w,K}$,
so $\mb{P}(I \in \mc{X}^g_w) = p^g_{w,K}/d_i$.
As $\mb{P}(w \in W_i) = 1/(2s+1)$ for each $w \in W$
and $\sum_{w \in W} p^g_{w,K} = |W|p^g_K$,
by a Chernoff bound, whp 
$|\mc{X}^g(I)| = \tfrac{|W| p^g_K}{(2s+1)d_i} \pm n^{.51}$.
This estimate holds for all such $I$, and so for
$t^g_i = \min \{ |\mc{X}^g(I)|: I \in \mc{I}^i \}$;
thus (i) holds.

For (ii), note that each $x \in V$ 
appears in exactly one interval in each $\mc{I}^i_j$,
so
$$
d^+_{J^K_g}(x,W) = \sum_{i=1}^{2s+1} \sum_{j=1}^{d_i} 
\big( \tfrac{|W| p^g_K}{(2s+1)d_i} \pm n^{.51} \big)
= |W| p^g_K \pm n^{.52}.
$$
Next we recall that 
INTERVALS chooses uniformly at random
$\mc{Y}^g(I) \sub \mc{X}^g(I)$ of size $t^g_i$.
The statements on $t^\pm_g(x)$ hold as 
for each $i$ there is exactly one $[a,b] \in \mc{I}^i$ with $a=x$ 
and exactly one $[a,b] \in \mc{I}^i$ with $b^+=x$. 
For future reference, we note that each
$|\mc{X}^g(I) \sm \mc{Y}^g(I)| < 2n^{.51}$.

For (iii), consider any $w \in W$.
We start INTERVALS by choosing $i=i(w) \in [2s+1]$
and $j=j(w) \in [d_i]$ uniformly at random.
Given these choices, any $I \in \mc{I}^i_j$ appears in $\mc{S}_w$
if $I \in \mc{A}_w$ and $I^\pm \notin \mc{A}_w$; 
this occurs with probability $1/8$, so 
$\mb{E}|\mc{S}_w|=|\mc{I}^i_j|/8 = n/8d_i \pm 1$.
As $|\mc{S}_w|$ is a $3$-Lipschitz function of
the events $\{I \in \mc{A}_w\}$, $I \in \mc{I}^i_j$,
by Lemma \ref{azuma} whp $|\mc{S}_w| = n/8d_i \pm n^{.51}$.
Each $I \in \mc{S}_w$ is included in $\mc{X}^g_w$
independently with probability $8p^g_{w,K}$,
so by a Chernoff bound whp
$|\mc{X}^g_w| = p^g_{w,K}n/d_i \pm 2n^{.51}$.
For each $I \in \mc{X}^g_w$ independently we have 
$I \in \mc{Y}^g_w$ with probability 
$t^g_i / |\mc{X}^g(I)| = 1 \pm n^{-.27}$,
as $p^g_K \ge n^{-.2}$. Thus $d_i \mb{E}|\mc{Y}^g_w| 
= p^g_{w,K}n \pm n^{.73}$, so by a Chernoff bound whp
$d^-_{J^K_g}(w) = |Y^g_w| = d_i|\mc{Y}^g_w| \pm d_i
= p^g_{w,K}n \pm 2n^{.73}$. We deduce
$d^-_{\ov{J}}(w) = n - \tfrac{d_i+1}{d_i} (|Y^1_w|+|Y^2_w|)
= \ov{p}_w n \pm n^{3/4}$, so (ii) holds.
We note that each $|Y^g_w| = |X^g_w| \pm n^{3/4}$
and $|\ov{Y}_w|=|\ov{X}_w| \pm n^{3/4}$.

For (iv), we first estimate the number $N$ of $u \in U$ 
such that $u \in X^g_w$ for all $w \in R$ and
$u \in \ov{X}_w$ for all $w \in R'$.
The actual quantity we need to estimate is obtained
by replacing `X' with `Y', and so differs in size
by at most $2sn^{3/4}$. 
For each $u \in U$, we have independently 
$\mb{P}(u \in X^g_w) = p^g_{w,K}$ for all $w \in R$ 
and $\mb{P}(u \in \ov{X}_w) = \ov{p}_w$ for all $w \in R'$,
so $\mb{E}N = |U| \prod_{w \in R} p^g_{w,K}
\prod_{w \in R'} \ov{p}_w$.
Indeed, given choices of $i=i(w)$ and $j=j(w)$, 
letting $I$ be the unique interval in $\mc{I}_j^i$
whose successor is $u$, we have
$\mb{P}(u \in \ov{X}_w) = 1-
\sum_{g = 1}^{2}(\mb{P}(u \in X_w^g) + \mb{P}(I \in \mc{X}_w^g))
= \ov{p}_w$.
Now (iv) follows from Lemma \ref{azuma},
as $N$ is a $3d$-Lipschitz function of $\le 2n$ 
independent random decisions in INTERVALS.

For (v), we will estimate 
$H' = \sum \{ h(w) : S \sub X^g_w, S' \sub \ov{X}_w \}$.
The actual quantity $H$ we need to estimate
is obtained from $H'$ by replacing `X' with `Y'.
We have $|H-H'| < 4sn^{3/4}$, as for each $i,j$ there are
$\le 2s$ intervals $I \in \mc{I}^i_j$
with $I \cap (S \cup S') \ne \es$
each with $<2n^{.51}$ choices of
$w \in \mc{X}^g(I) \sm \mc{Y}^g(I)$
each with $|h(w)| < n^{.01}$.
If $S \cup S'$ is $3d$-separated then 
independently for all $w \in W$ we have 
$\mb{P}(x \in X^g_w) = p^g_{w,K}$ for all $x \in S$ 
and $\mb{P}(x \in \ov{X}_w) = \ov{p}_w$ for all $x \in S'$;
the required estimates on $H'$ and so $H$
follow whp from Lemma \ref{bernstein}.

Finally, we consider (v) when $(S,S')$ is $3d$-separated.
We fix $w \in W$, condition on $i(w)=i$ and $j(w)=j$,
and recall $\mb{P}(S \sub X^g_w, S' \sub \ov{X}_w)
= \mb{P}(S \sub X^g_w) \mb{P}(S' \sub \ov{X}_w)$.
We have the bound
$\mb{P}(S' \sub \ov{X}_w) \ge 2^{-s}$
from the event $I \notin \mc{A}_w$ 
for all $I \in \mc{I}^i_j$ with $I \cap S' \ne \es$.
We claim that  
$\mb{P}(S \sub X^g_w) > (5s)^{-1} (p^g_{w,K})^{|S|}$,
which by Lemma \ref{bernstein} suffices to complete the proof.

To prove the claim, we first note that if for some $\mc{I}^i_j$
no two vertices of $S$ lie in consecutive intervals then 
$\mb{P}(S \sub X^g_w \mid i(w)=i, j(w)=j)
\ge (p^g_{w,K})^{|S|}$: indeed, the events 
$\{I \in \mc{X}^g_w\}$ for $I \in \mc{I}^i_j$ 
with $I \cap S \ne \es$ are positively correlated.
For $i \in [2s+1]$ let $J^i_s$ be the set of $j \in [d_i]$
for which some pair $x,x'$ of $S$ lie in consecutive intervals
of $\mc{I}^i_j$: we say $j$ is $i$-bad for $x,x'$.
We note that if $j$ is $i$-bad for some pair in $S$
then it is $i$-bad for some consecutive pair $x,x'$ in $S$
(i.e.\ $\{x,x'\} \cap S = \es$).
It suffices to show that some $|J^i_s| < d_i/2$.
For this, we note that as $|S| \le s$ we can fix $i \in [2s+1]$
so that the cyclic distance between any pair of vertices 
in $S$ is either $< d_{i+1}$ or $\ge d_{i-1}$.
There are no $i$-bad $j$ for any pair $x,x'$
with $d(x,x') \ge d_{i-1} = 2sd_i$.
Also, if $d(x,x')<d_{i+1}$ then $j$ is $i$-bad for $x,x'$
only if $\mc{I}^i_j$ contains an interval with an endpoint
in the cyclic interval $[x,x']$, so there are at most
$d_{i+1}$ such $j$. We deduce $|J^i_s| < sd_{i+1} = d_i/2$,
which completes the proof of the claim, and so of the lemma.
\end{proof}

The next lemma contains similar statements to those in
the previous one concerning the colours and directions
introduced in DIGRAPH.
In (iii) we define $J^{K'}_g$ by $J^{K'}_g[V,W]=J^K_g[V,W]$ and
$\ova{uv} \in J^{K'}_g[V] \Lra \ova{uv}^- \in J^K_g[V]$,
thus removing the twist: if for some arc $\ova{uv}$ of $G_g$
we add $\ova{uv}^-$ to $J^K_g$
then we add $\ova{uv}$ to $J^{K'}_g$.

\begin{lemma} \label{deg}
Let $g \in [2]$. 
Write $q^g_0=p^g_*$, $q^g_{K'}=p^g_K$
and $q^g_c=0$ otherwise. Then whp:
\begin{enumerate}
\item For every $v \in V$
and $c \in [3,K] \cup \{0\}$ we have
$d^\pm_{J_g}(v,V) = p_g (1 \pm \eps)\aA n \pm n^{.6}$,
$d^\pm_{J^c_g}(v,V) = p^g_c (1 \pm \eps) \aA n \pm n^{.6}$,
$d^+_{J^c_g}(v,W) = p^g_c \aA n \pm 2n^{3/4}$.
\item For every $w \in W$
and $c \in [3,K] \cup \{0\}$ we have
$d^-_{J^c_g}(w,V) = p^g_{w,c} n \pm 2n^{3/4}$.
\item For any mutually disjoint sets $R_c \sub W$ 
and $S^+_c, S^-_c \sub V$
for $c \in [3,K-1] \cup \{0,K'\}$
with $\sum_c |R_c| \le s$ 
and $\sum_c |S^\pm_c| \le s$ we have
\begin{align*}  &\Big| \bigcap_c 
\big( N^-_{J^c_g}(R_c) \cap 
N^+_{J^c_g}(S^+_c) \cap 
N^-_{J^c_g}(S^-_c) \big) \Big|\\
&= |N_G^+(\cup_c S_c^+) \cap N_G^-(\cup_c S_c^-)|
 \prod_c \Big( (q^g_c)^{|S_c^+|+|S_c^-|}
 \prod_{w \in R_c} p^g_{w,c} \Big)
\pm 4sn^{3/4}.
\end{align*}
\item Consider $H' := 
\big| W \cap N^+_{J^K_g}(S) \cap 
 \bigcap_c N^+_{J^c_g}(S_c) \big|$
for disjoint $S,S' \sub V$ of sizes $\le s$ with 
$S'$ partitioned as $(S_c: c \in [3,K-1] \cup \{0\})$.
\begin{align*}
& \text{If } S \cup S' \text{ is } 3d\text{-separated then } 
H' = \sum_{w \in W} (p^g_{w,K})^{|S|} \prod_c (p^g_{w,c})^{|S_c|} 
 \pm 6sn^{3/4}. \\
& \text{If } (S,S') \text{ is } 3d\text{-separated then } 
H' + n^{.6} \ge 2^{-2s} \sum_{w \in W} (p^g_{w,K})^{|S|} 
\prod_c (p^g_{w,c})^{|S_c|}.
\end{align*}
\end{enumerate}
\end{lemma}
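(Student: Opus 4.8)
The plan is to derive Lemma~\ref{deg} from Lemma~\ref{lem:int} by conditioning on the outcome of INTERVALS and then exploiting the independence of the colour/orientation decisions made in DIGRAPH. First I would fix an outcome of INTERVALS on which all the conclusions of Lemma~\ref{lem:int} hold (these hold whp, so it suffices to prove everything conditionally on this event). In particular I may assume $|Y^g_w| = p^g_{w,K} n \pm n^{3/4}$, $|\ov{Y}_w| = \ov{p}_w n \pm n^{3/4}$, the counts $t^\pm_g(x)$, and the common-neighbourhood estimates (iv), (v) of Lemma~\ref{lem:int}. Everything else is now a function of the independent random decisions in step (i) (the split $G = G_1 \sqcup G_2 \sqcup (\text{rest})$), step (ii) (each $\ova{xy} \in G_g$ not a near-loop gets colour $0$ with probability $p^g_*/p_g$, else $\ova{xy}^-$ goes to $J^K_g$), and step (iv) (each $\ova{xw} \in \ov{J}[V,W]$ enters $J^*_g[V,W]$ with probability $p^g_{w,*}/\ov{p}_w$ and then receives colour $c$ with probability $p^g_{w,c}/p^g_{w,*}$).

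For part (i), the $V$--$V$ degrees: $d^\pm_G(v) = \aA n$ by assumption and this splits as a sum of independent indicators over the arcs at $v$, each landing in $G_g$ with probability $p_g$; so $d^\pm_{J_g}(v,V)=(1\pm o(1))p_g\aA n$ by a Chernoff bound, but more precisely I want the error relative to $d^\pm_G(v)$, which by $(\eps,t)$-typicality is $(1\pm\eps)\aA n$, giving $p_g(1\pm\eps)\aA n \pm n^{.6}$. Conditioning further, each arc of $G_g$ at $v$ that is not a near-loop goes to colour $0$ or has its predecessor go to $J^K_g$, again independently, so $d^\pm_{J^0_g}(v,V)$ and $d^\pm_{J^{K'}_g}(v,V)=d^\pm_{J^K_g}(v^{\pm},V)$ (after untwisting as in the paragraph before the lemma) concentrate around $p^g_*$ resp.\ $p^g_K$ times $(1\pm\eps)\aA n$; the number of near-loops at $v$ is $O(1)$ so it is absorbed in the $n^{.6}$ error. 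The $V$--$W$ out-degree $d^+_{J^c_g}(v,W)$ is handled for $c=K$ (i.e.\ $c=K'$ untwisted) directly by Lemma~\ref{lem:int}(ii), and for $c\in[3,K-1]\cup\{0\}$ by noting $d^+_{J^c_g}(v,W)$ counts $w$ with $v\in\ov{Y}_w$ and the two independent DIGRAPH coins coming up right, i.e.\ a sum over $w$ of independent indicators with success probability $p^g_{w,c}/\ov{p}_w \cdot \mathbf{1}[v\in\ov{Y}_w]$, whose mean is $\sum_w p^g_{w,c}\mathbf{1}[v\in\ov{Y}_w]/\ov{p}_w$; since by Lemma~\ref{lem:int}(iv) (with $R=\es$, $R'=\{w\}$, pointwise) the set $\{w: v\in\ov{Y}_w\}$ is "typical" one gets $\approx\sum_w p^g_{w,c} = p^g_c\aA n$ up to $n^{3/4}$, and a Chernoff bound gives the stated error. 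Part (ii) is similar but easier: $d^-_{J^c_g}(w,V)$ for $c\ne K$ is a sum over $x\in\ov{Y}_w$ of independent indicators with probability $p^g_{w,c}/\ov{p}_w$, with mean $|\ov{Y}_w|p^g_{w,c}/\ov{p}_w = p^g_{w,c}n \pm n^{3/4}$ by Lemma~\ref{lem:int}(iii); for $c=K$ it is exactly $|Y^g_w|$, again Lemma~\ref{lem:int}(iii).

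Parts (iii) and (iv) are the heart and where the work concentrates. For (iii), I would condition on INTERVALS and on $G_1,G_2$ (step (i)), and further on which $V$--$W$ arcs of $\ov{J}$ enter each $J^*_g$ (the first coin of step (iv)); after this conditioning the colour classes of the $V$--$V$ arcs (step (ii)) and the colours of the $V$--$W$ arcs in $J^*_g$ (second coin of step (iv)) are independent across arcs. The quantity to estimate is a count of vertices $v$ lying in a prescribed common in/out-neighbourhood across several colour classes and vertex-sets; conditioning on the underlying uncoloured structure, the ground set is $N^+_G(\cup_c S^+_c)\cap N^-_G(\cup_c S^-_c)$ (for the $V$-coordinates, using that the $J^{K'}_g$ relation is the untwisted $J^K_g$ relation and the colour decisions are per-arc), intersected with the relevant interval-sets $Y^g_w$ or $\ov{Y}_w$ (for the $W$-coordinates in $R_c$). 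Each such vertex survives the colour lottery with the product probability $\prod_c (q^g_c)^{|S^+_c|+|S^-_c|}\cdot\prod_c\prod_{w\in R_c}(\text{colour-}c\text{ probability})$, these events being independent across $v$; so the conditional expectation is $|N^+_G(\cup S^+_c)\cap N^-_G(\cup S^-_c)\cap(\text{interval sets})|$ times that product, and a Chernoff bound gives concentration within $n^{3/4}$-ish. The remaining task is to replace the interval-set-restricted ground-set size by the clean formula: here I invoke Lemma~\ref{lem:int}(iv) on the set $U := N^+_G(\cup_c S^+_c)\cap N^-_G(\cup_c S^-_c)$ (whose size is $((1\pm\eps)\aA)^{|S^+|+|S^-|}n$ by typicality), which gives $|U\cap\bigcap_{w\in R}Y^g_w| = |U|\prod_{w}p^g_{w,K}\pm O(sn^{3/4})$ — but note (iii) has $R_c$ with $c\ne K$ only going into $J^c_g$ via $J^*_g$ not $J^K_g$, so actually the $W$-coordinates here never hit $Y^g_w$; rather they contribute via the colour probability $p^g_{w,c}/\ov{p}_w$ on the event $v\in\ov{Y}_w$, and one uses Lemma~\ref{lem:int}(iv) with $R=\es$ and $R'=\cup_c R_c$ to control $|U\cap\bigcap_{w\in\cup R_c}\ov{Y}_w|$, then multiplies by $\prod_c\prod_{w\in R_c}(p^g_{w,c}/\ov{p}_w)$ and notes the $\ov{p}_w$ cancels. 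Tracking the error: each application of Lemma~\ref{lem:int}(iv) costs $3sn^{3/4}$, typicality contributes a multiplicative $(1\pm\eps)$ which against the $\Theta(n)$ ground set is $O(\eps n) \ll n^{3/4}$?? — no, $\eps n \gg n^{3/4}$, so I must be more careful: I should state (iii) with the ground-set written as $|N^+_G(\cdot)\cap N^-_G(\cdot)|$ exactly (which the lemma does!) rather than expanding it via typicality, so the only errors are the additive $O(sn^{3/4})$ from Lemma~\ref{lem:int}(iv) and the Chernoff fluctuation, together well under $4sn^{3/4}$ after choosing constants; this is exactly why the statement keeps $|N_G^+\cap N_G^-|$ unexpanded. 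Part (iv) is the analogue for $W$-vertices: $H'$ counts $w$ with $S\sub N^-_{J^K_g}(w)$ (i.e.\ $S\sub Y^g_w$) and $S_c\sub N^-_{J^c_g}(w)$ for each $c\ne K$; conditioning on INTERVALS, the first event has $w$-probability governed by the interval geometry and is handled by Lemma~\ref{lem:int}(v) (the $3d$-separated case gives the clean sum, the merely-$(S,S')$-separated case gives the $2^{-2s}$ lower bound), while for each $w$ surviving that, the events $S_c\sub N^-_{J^c_g}(w)$ contribute independent factors $(p^g_{w,c})^{|S_c|}$ from the DIGRAPH colour coins (using $v\in\ov{Y}_w$ for all $v\in S_c$, which for $3d$-separated $S'$ we also get with the right probabilities from the interval analysis, folded into Lemma~\ref{lem:int}(v)'s $\ov{p}_w^{|S'|}$ and cancelled against the colour normalisation $p^g_{w,c}/\ov{p}_w$). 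Summing $h(w)=1$ over $w$ and applying Lemma~\ref{bernstein} (the summands are bounded, variance $O(n)$) gives the $\pm 6sn^{3/4}$; for the lower-bound half one drops to the $2^{-2s}$ bound from Lemma~\ref{lem:int}(v) and accepts the extra $+n^{.6}$ slack to absorb the concentration error. The main obstacle is exactly the bookkeeping of these error terms and probability normalisations — making sure every $\ov{p}_w$ introduced by a colour-probability denominator is cancelled by an $\ov{p}_w$ coming out of the interval-set size from Lemma~\ref{lem:int}(iv),(v), and that the ground sets in (iii) are left in the unexpanded form $|N^+_G(\cdot)\cap N^-_G(\cdot)|$ so that the $(1\pm\eps)$ typicality error is never multiplied against $\Theta(n)$; with that discipline, every estimate reduces to a Chernoff or Bernstein bound on an explicitly independent sum.
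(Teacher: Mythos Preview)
Your overall strategy matches the paper's exactly: condition on a good outcome of INTERVALS so that Lemma~\ref{lem:int} applies, then note that every quantity in Lemma~\ref{deg} is a $1$-Lipschitz function of the independent DIGRAPH decisions, so Azuma/Chernoff reduces the problem to computing expectations, which in turn reduce to the estimates of Lemma~\ref{lem:int}. The paper's proof is terser than yours but follows the same outline.

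There is one genuine slip in your treatment of part~(iii). You correctly begin by invoking Lemma~\ref{lem:int}(iv) on $U = N^+_G(\cup_c S^+_c)\cap N^-_G(\cup_c S^-_c)$, but then talk yourself out of it: you say that since the index set is $[3,K-1]\cup\{0,K'\}$, none of the $R_c$ route through $Y^g_w$, and therefore one should take $R=\es$ and $R'=\cup_c R_c$. This is wrong for $c=K'$. By definition $J^{K'}_g[V,W]=J^K_g[V,W]$, so $N^-_{J^{K'}_g}(w)=Y^g_w$, not a random subset of $\ov{Y}_w$; there is no colour coin to flip here. The correct application of Lemma~\ref{lem:int}(iv) takes $R=R_{K'}$ and $R'=\cup_{c\ne K'} R_c$, yielding the factor $\prod_{w\in R_{K'}} p^g_{w,K}$ directly from the interval analysis, and only then do the $\ov{p}_w$ factors from $R'$ cancel against the colour-probability denominators $p^g_{w,c}/\ov{p}_w$ for $c\ne K'$. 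This is precisely what the paper does (it writes ``$R=R_K$''). Your initial instinct before the ``but note'' was right.

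A minor point: in part~(i) when handling $d^+_{J^c_g}(v,W)$ for $c\ne K$, you need a sum over $w\in W$ of indicators, so the relevant input from Lemma~\ref{lem:int} is part~(v) (with $S=\es$, $S'=\{v\}$, $h(w)=p^g_{w,c}/\ov{p}_w$), not part~(iv); part~(iv) counts vertices in $V$, not in $W$.
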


\begin{proof}
All quantities considered are $1$-Lipschitz functions
of the random choices in DIGRAPH, so by Lemma \ref{azuma}
it suffices to estimate the expectations.
For (i), we recall that $G$ has vertex in- and outdegrees
$(1 \pm \eps) \aA n$, and for each $\ova{xy}$ in $G$
we have $\mb{P}(\ova{xy} \in J_g) = p_g$,
so $\mb{E}d^+_{J_g}(v,V) = p_g (1 \pm \eps)\aA n$.
The other expectations are similar, with slightly
modified calculations due to the twisting in colour $K$
and avoiding loops; for example,
$\mb{E}d^-_{J^K_g}(v,V) = p^g_K (d^-_G(v^+) \pm 1)
= p^g_K (1 \pm \eps)\aA n \pm 1$.
For (ii), we recall $d^-_{\ov{J}}(w) = \ov{p}_w n \pm n^{3/4}$
from Lemma \ref{lem:int}.iii, so for $c \ne K$ we have
$\mb{E}d^-_{J^K_c}(w) = p^g_{w,c}\ov{p}_w^{-1} d^-_{\ov{J}}(w)
= p^g_{w,c} n \pm n^{3/4}$. (The estimate for $c=K$
was already given in Lemma \ref{lem:int}.iii.)
For (iii), we first apply Lemma \ref{lem:int}.iv
with $U = N_G^+(\cup_c S_c^+) \cap N_G^-(\cup_c S_c^-)$, 
$R = R_K$
and $R' = \cup_{c \ne K} R_c$ to obtain
\begin{align*} &\phantom{=}\bsize{N_G^+(\cup_c S_c^+) \cap N_G^-(\cup_c S_c^-) \cap N^-_{J^K_g}(R_K) 
\cap N^-_{\ov{J}}(\cup_{c \ne K} R_c)}\\
&= |N_G^+(\cup_c S_c^+) \cap N_G^-(\cup_c S_c^-)| \prod_{w \in R_K} p^g_{w,K}
\prod_{w \in \cup_{c \ne K} R_c} 
\ov{p}_w \pm 3sn^{3/4}. \end{align*}
For each vertex $v$ counted here independently we have
$\mb{P}(\ova{vw} \in J^c_g \mid \ova{vw} \in \ov{J}) 
= p^g_{w,c}/\ov{p}_w$ for all $w \in R_c$,
$\mb{P}(\ova{vx} \in J^c_g \mid \ova{vx} \in G) 
= q^g_{c}$ for all $x \in S_c^-$ and
$\mb{P}(\ova{xv} \in J^c_g \mid \ova{xv} \in G) 
= q^g_{c}$ for all $x \in S_c^+$,
so whp the stated bound for (iii) holds.
For (iv) we first consider 
$H := |N^+_{J^K_g}(S) \cap N^+_{\ov{J}}(S')|$.
By Lemma \ref{lem:int}.v with $h(w)=1$,
if $S \cup S'$ is  $3d$-separated then 
$H = \sum_{w \in W} (p^g_{w,K})^{|S|} \ov{p}_w^{|S'|}
 \pm 5sn^{3/4}$, and if $(S,S')$ is $3d$-separated then
$H \ge 2^{-2s} \sum_{w \in W} (p^g_{w,K})^{|S|}$.
For each vertex $w$ counted here independently we have
$\mb{P}(\ova{vw} \in J^c_g \mid \ova{vw} \in \ov{J}) 
= p^g_{w,c}/\ov{p}_w$ for all $v \in S_c$, 
so whp the stated bound for (iv) holds.
\end{proof}

\section{Analysis II: wheel regularity} \label{sec:reg}

In this section we show how to assign weights to wheels in
each $J_g$ so that for any arc $\ova{e}$ there is total weight
about $1$ on wheels containing $\ova{e}$, and furthermore
all weights on wheels with $c+1$ vertices are of order $n^{1-c}$.
This regularity property is an assumption in the wheel
decomposition results of section \ref{sec:wheel},
and is also sufficient in its own right for approximate 
decompositions by a result of Kahn \cite{KaLP}.
The estimate for the total weight of wheels on an arc will hold
even if we add any new arc to $J_g$, which is useful 
as we will need to consider small perturbations of $J_1$
due to arcs of $G$ not allocated to $G_1$ or $G_2$ or not
covered in the approximate decomposition of $G_2$.

We start by considering wheels $\wC$ with $c<K$. Let 
\[W^g_{w,c} = n^c p^g_{w,c} (p^g_{w,0})^{c-1} (\aA p^g_*)^c.\]
The motivation for this formula is that it is about the
expected number of $\wC$'s in $J_g$ using $w$. 
For any arc $\ova{e}$ let $W^g_c(\ova{e})$ 
be the set of copies of $\wC$ in $J_g$ with hub in $W$ using $\ova{e}$.
Let \[ \hat{W}^g_c(\ova{e}) = \sum \{ 
p^g_{w,c} n (W^g_{w,c})^{-1} : 
\mc{W} \in W^g_c(\ova{e}), w \in V(\mc{W}) \}.\]
(If $p^g_{w,c}=0$ there are no such $\mc{W}$,
so $(W^g_{w,c})^{-1}$ is always defined when used.)
In the following lemma we calculate the total weights on arcs
due to copies of $\wC$, although we note that we do not have
a good estimate for $\ova{xy} \in J^0_g[V]$ if $d(x,y)<3d$.
In $J_2$ we can ignore such arcs, as we only need an 
approximate decomposition, whereas in $J_1$ we will cover
these by wheels greedily before finding the exact decomposition
-- this forms part of the perturbation referred to above.

\begin{lemma} \label{degWc}
Let $c' \in \{0,c\}$, $N_c=1$ and $N_0 = c-1$.
Then whp:
\begin{enumerate}
\item If $p_{w,c'}^g \neq 0$ and we add $\ova{xw}$ to $J^{c'}_g[V,W]$ 
then $\hat{W}^g_c(\ova{xw})
= (1 \pm 4\eps) N_{c'} p^g_{w,c}/p^g_{w,c'} \pm n^{-.2}$.
\item If $d(x,y)\ge 3d$ and  we add $\ova{xy}$ to $J^0_g[V]$ 
then $\hat{W}^g_c(\ova{xy}) 
= (1 \pm 4\eps) cp^g_c/p^g_* \pm n^{-.2}$.
\end{enumerate}
\end{lemma}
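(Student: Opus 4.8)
The strategy is a first- and second-moment computation for each of the quantities $\hat W^g_c(\ova{e})$, treating $\ova e$ as an already-present arc (or a newly added arc) and counting wheels that complete it. We fix the colour $c<K$ and the arc $\ova e$, and for a prospective hub $w$ and choice of the remaining $c-1$ rim vertices and the ``extra'' spoke/rim vertices, we decompose the indicator that a given candidate copy of $\wC$ lies in $J_g$ into a product of independent arc-events, using that in DIGRAPH each arc of $G$ is placed in $J_g$ (and given a colour) independently. The key point is that a $\wC$ on hub $w$ consists of a directed $c$-cycle in $J^0_g[V]$ (whose $c$ arcs, for the generic placement, have cyclic distance $\ge 3d$ and so are genuinely present with the expected density $\aA p^g_*$, by typicality of $G$ combined with the independent colouring), together with $c$ spokes into $w$: $c-1$ of colour $0$ (each present with density $\approx p^g_{w,0}/\ov p_w$ conditioned on the $\ov J$-event, hence $\approx p^g_{w,0}$ unconditionally) and one of colour $c$ (density $\approx p^g_{w,c}$); the normalising factor $p^g_{w,c}n(W^g_{w,c})^{-1}$ is exactly calibrated so that the expected contribution to $\hat W^g_c(\ova e)$ from hub $w$ is $\approx N_{c'}p^g_{w,c}/p^g_{w,c'}$ in case~(i) and $\approx cp^g_{w,c}$ in case~(ii) (the factor $c$ and $c-1$ arising from the number of rim positions the fixed arc can occupy), after which one sums over $w\in W$ and uses $\sum_w p^g_{w,c}=|W|p^g_c=\aA p^g_c n$.

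In more detail: for~(ii) with $\ova{xy}\in J^0_g[V]$, $d(x,y)\ge 3d$, I would sum over $w\in W$, over the $c-2$ intermediate rim vertices $z_1,\dots,z_{c-2}$ forming a directed path from $y$ to $x$ (there are $\approx (\aA p^g_* n)^{c-2}$ such, by typicality applied to the in/out-neighbourhoods, since all the distances involved are $\ge 3d$ off a negligible set), and over the position of the colour-$c$ spoke among the $c$ rim vertices. For a fixed such configuration the probability it all lies in $J_g$ with the right colours is, up to $(1\pm\eps)$-type errors, $(\aA p^g_*)^{c-1}\cdot(p^g_{w,0})^{c-1}\cdot p^g_{w,c}\cdot(\text{const})$; multiplying by $p^g_{w,c}n(W^g_{w,c})^{-1}$ and simplifying against the definition of $W^g_{w,c}=n^cp^g_{w,c}(p^g_{w,0})^{c-1}(\aA p^g_*)^c$ collapses everything, leaving $\mb E\,\hat W^g_c(\ova{xy})=(1\pm 3\eps)cp^g_c/p^g_*$. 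For~(i) the arc is a spoke into a specific $w$, so $w$ is forced; one instead sums over the $c-1$ rim vertices and gets the factor $N_{c'}\in\{1,c-1\}$ from how many colour-$0$ spokes there are relative to the one colour-$c$ spoke, and the ratio $p^g_{w,c}/p^g_{w,c'}$ falls out of the normalisation. The lower-order $n^{-.2}$ terms are there because some $p^g_{w,c}$ can be as small as $n^{-.2}$ (in Case~$\ell^*$, $g=1$), so the additive error from the $\pm n^{3/4}$-type estimates of Lemma~\ref{lem:int} and Lemma~\ref{deg}, divided by the main term $\sim n$, is only controllable down to $n^{-.2}$.

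For concentration I would express $\hat W^g_c(\ova e)$ as a function of the independent per-arc decisions in DIGRAPH (and the choices in INTERVALS), note it has bounded differences of polynomial size (changing one arc's colour alters the wheel count through $\ova e$ by at most $n^{c-1}$, while $W^g_{w,c}\gtrsim n^c\cdot n^{-.2}$, so each coordinate's effect is $\lesssim n^{-.8}$ after normalising, and there are $\le n^{O(1)}$ coordinates), so Lemma~\ref{azuma} gives concentration with exponentially small failure probability; a union bound over the $\le n^{O(1)}$ arcs $\ova e$ then gives the ``whp'' uniform statement, and the same applies to newly added arcs since the estimate never used that $\ova e\in J_g$. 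One subtlety: the rim path must avoid using $\ova e$'s endpoints again and must stay in the ``good'' distance regime, so a small correction term accounts for degenerate configurations and for rim arcs accidentally at distance $<3d$; these are lower order and absorbed into the $\eps$ and $n^{-.2}$ slack, using that the set of vertex pairs at cyclic distance $<3d$ has size $O(dn)=o(n^2)$.

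The main obstacle I expect is the bookkeeping in the expectation computation: correctly tracking the conditioning coming from step (iv) of DIGRAPH (arcs into $W$ are chosen inside $\ov J[V,W]$, so probabilities are $p^g_{w,c}/\ov p_w$ relative to the $\ov J$-event, whose probability is $\ov p_w$ by Lemma~\ref{lem:int}.iii), together with the twisting that defines colour $K$ but here only enters through the fact that the $c<K$ wheels live in $J^*_g=J_g\sm J^K_g$ whose density is $p^g_*$ rather than $p_g$; keeping the product of all these factors exactly matched to the definition of $W^g_{w,c}$ so that the main term is genuinely $1\pm O(\eps)$, rather than $1\pm O(1)$, is where the care is needed. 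Everything else is routine concentration via Lemmas~\ref{bernstein} and~\ref{azuma}.
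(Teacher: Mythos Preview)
Your proposal is correct and follows essentially the same approach as the paper. The paper organises the computation slightly differently by first introducing ``$c$-prewheels'' (wheels with rim in $G$ and all spokes in $\ov J$), counting these via sequential vertex choice using Lemma~\ref{lem:int}.iv, and then multiplying by the independent DIGRAPH probability factors; this cleanly separates the INTERVALS randomness (which determines $\ov J$) from the DIGRAPH randomness before applying Lemma~\ref{azuma}. Your direct product-of-probabilities approach is equivalent, and you correctly identify the $\ov p_w$ conditioning as the main bookkeeping point. For case~(ii) the paper sums the per-$w$ contributions using Lemma~\ref{lem:int}.v (with $h(w)=p^g_{w,c}\ov p_w^{-2}$ and $S=\es$, $S'=\{x,y\}$), which is where the hypothesis $d(x,y)\ge 3d$ enters; your plan to ``sum over $w\in W$ and use $\sum_w p^g_{w,c}=|W|p^g_c$'' is the same idea but should invoke part~(v) rather than~(iii) to handle the restriction to $w\in N^+_{\ov J}(x)\cap N^+_{\ov J}(y)$.
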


\begin{proof}
As a preliminary step for counting copies of $\wC$
we count $c$-prewheels, which we define to consist of a wheel
with oriented rim cycle in $G$ and all spokes in $\ov{J}$.
For any arc $\ova{e}$ we let $P_c(\ova{e})$ be the set of $c$-prewheels 
using $\ova{e}$; we will estimate $|P_c(\ova{e})|$ using the analysis 
of INTERVALS in Lemma \ref{lem:int}.
 
For (i), we estimate $|P_c(\ova{xw})|$ as follows.
We let $x=x_c$ and choose the other rim vertices 
$x_1,\dots,x_{c-1}$ sequentially in cyclic order.
At $c-2$ steps we choose 
$x_{i+1} \in N_G^+(x_i) \cap N^-_{\ov{J}}(w)$:
each has $\aA n \ov{p}_w \pm 3sn^{3/4}$ 
options by Lemma \ref{lem:int}.iv 
with $U=N_G^+(x_i)$, $R=\es$, $R'=\{w\}$,
using $|N_G^+(x_i)|=\aA n$ ($G$ is $\aA n$-regular).
At the last step we choose
$x_{c-1} \in N_G^+(x_{c-2}) \cap N_G^-(x_c) \cap 
N^-_{\ov{J}}(w)$, so similarly there are 
$|N_G^+(x_{c-2}) \cap N_G^-(x_c)| \ov{p}_w \pm 3sn^{3/4}$ 
options, where $|N_G^+(x_{c-2}) \cap N_G^-(x_c)|
= ((1 \pm \eps)\aA)^2 n$ by typicality of $G$.
Thus $|P_c(\ova{xw})| 
= (1 \pm 3\eps) \aA^c (\ov{p}_w n)^{c-1}$.

Now consider the case $c'=c$,
i.e.\ $\ova{xw}$ is added to $J^c[V,W]$.
For any $c$-prewheel containing $\ova{xw}$, 
independently we include the cycle arcs in $J^0_g$ 
with probability $p^g_*$ 
and give each $\ova{x_i w}$ with $i \ne c$ colour $0$ 
with probability $p^g_{w,0}/\ov{p}_w$, so
$\mb{E}|W^g_c(\ova{xw})| 
= (1 \pm 3\eps) (\aA p^g_*)^c (p^g_{w,0} n)^{c-1} 
= (1 \pm 3\eps) W^g_{w,c}/p^g_{w,c}n$.
Of these random decisions, 
$\le 2n$ concern an arc containing one of $x,w$, 
which affect $|W^g_c(\ova{xw})|$ by $O(n^{c-2})$,
and the others have effect $O(n^{c-3})$.
Thus $|W^g_c(\ova{xw})|$ is $O(n^{2c-3})$-varying,
so by Lemma \ref{azuma} whp $|W^g_c(\ova{xw})| 
= (1 \pm 4\eps) W^g_{w,c}/p^g_{w,c}n$,
i.e.\ $\hat{W}^g_c(\ova{xw}) = 1 \pm 4\eps$.
When $c'=0$ we argue similarly.
Now $x$ can be any $x_i$ with $i \ne c$,
for which we have $c-1$ choices.
The probability factors are the same as in the previous
calculation, except that for $\ova{x_c w}$ we replace
$p^g_{w,0}/\ov{p}_w$ by $p^g_{w,c}/\ov{p}_w$.
Again, the stated estimate holds whp 
by Lemma \ref{azuma}, so (i) holds.

For (ii), we write $\hat{W}^g_c(\ova{xy}) 
= \sum_{w \in W} \hat{W}^g_c(xyw)$, 
where $\hat{W}^g_c(xyw)$ is the sum of $(W^g_{w,c})^{-1}$
over the set $W^g_c(xyw)$ of copies of $\wC$ in $J_g$ 
using $\ova{xy}$, $\ova{xw}$ and $\ova{yw}$. 
Fix $w \in N^+_{\ov{J}}(x) \cap N^+_{\ov{J}}(y)$ and
consider the number $|P_c(xyw)|$ of $c$-prewheels
using $\{\ova{xy},\ova{xw},\ova{yw}\}$. Choosing rim
vertices sequentially as in (i), now there are
$c-3$ steps with $\aA n \ov{p}_w \pm 3sn^{3/4}$ options
and again $((1 \pm \eps)\aA)^2 \ov{p}_w n \pm 3sn^{3/4}$
options at the last step, so $|P_c(xyw)| 
= (1 \pm 3\eps) \aA^{c-1} (\ov{p}_w n)^{c-2}$.

Now we consider which of these $c$-prewheels 
extend to wheels in $W^g_c(xyw)$:
there are $c$ choices for the position of $\ova{xy}$ on the rim,
then some probabilities determined by independent random decisions:
the $c-1$ rim edges are each correct with probability $p^g_*$,
the spoke of colour $c$ with probability $p^g_{w,c}/\ov{p}_w$,
and the other $c-1$ spokes each with probability $p^g_{w,0}/\ov{p}_w$.
Therefore \[ \mb{E} \hat{W}^g_c(xyw)
= (1 \pm 3\eps) c (\aA p^g_*)^{c-1} p^g_{w,c} (p^g_{w,0})^{c-1} 
\ov{p}_w^{-2} n^{c-2} p^g_{w,c} n (W^g_{w,c})^{-1}
= (1 \pm 3\eps) c (\aA p^g_*)^{-1} p^g_{w,c} n (\ov{p}_w n)^{-2}. \]
By Lemma \ref{azuma} whp $\hat{W}^g_c(\ova{xy}) 
= (1 \pm 3.1\eps) c (\aA p^g_* n)^{-1} H$, 
with $H = \sum \{ p^g_{w,c} \ov{p}_w^{-2} : 
w \in N^+_{\ov{J}}(x) \cap N^+_{\ov{J}}(y) \}$.

We estimate $H$ by Lemma \ref{lem:int}.v with $S=\es$, $S'=\{x,y\}$
and $h(w) = p^g_{w,c}\ov{p}_w^{-2}$ (each $7/8 \le \ov{p}_w \le 1$).
As $S \cup S'$ is $3d$-separated,
whp $H = |W|p^g_c \pm 5sn^{3/4}$, giving
$\hat{W}^g_c(\ova{xy}) = (1 \pm 4\eps) cp^g_c/p^g_* \pm n^{-.2}$.
\end{proof}

Now we apply a similar analysis for $\wK$. Let 
\[ W^g_{w,K} = n^8  \aA p^g_K p^g_{w,K} 
(\aA p^g_* p^g_{w,0})^7 .\]
For any arc $\ova{e}$ let $W^g_K(\ova{e})$ 
be the set of copies of $\wK$ in $J_g$ using $\ova{e}$.
We define $\hat{W}^g_K(\ova{e})$ by
setting $c=K$ in $\hat{W}^g_c(\ova{e})$. 
Now we calculate the total weights on arcs
due to copies of $\wK$. Note that we cannot give
a good estimate for $\ova{xy} \in J^K_g[V]$ if $d(x,y)<3d$.
We can ignore such arcs in $J_2$ (as mentioned above), 
but in $J_1$ we will replace such arcs 
by arcs of colour $0$ (modified by twisting)
-- this also forms part of the perturbation.

\begin{lemma} \label{degWK}
Let $c' \in \{0,K\}$, $N_K=1$, 
$N_0 = 7$, $q^g_K = p^g_K$, $q^g_0=p^g_*$. Then whp:
\begin{enumerate}
\item If we add $\ova{xw}$ to 
$J^{c'}_g[V,W]$ then $\hat{W}^g_K(\ova{xw})
= (1 \pm 4\eps) N_{c'} p^g_{w,K}/p^g_{w,c'}$.
\item Suppose we add $\ova{xy}$ to $J^{c'}_g[V]$. 
If $d(x,y)\ge 3d$ then $\hat{W}^g_c(\ova{xy}) 
= (1 \pm 4\eps) N_{c'} q^g_K/q^g_{c'}$.\\
If $c'=0$ then
$\hat{W}^g_K(\ova{xy}) > 2^{-2s-1} p^g_K/p^g_*$.
\end{enumerate}
\end{lemma}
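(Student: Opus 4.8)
The plan is to follow the proof of Lemma~\ref{degWc} with $c=8$; the two new features are that one rim arc of $\wK$, say $\ova{x_7x_8}$, has colour $K$ and so, by the twisting in DIGRAPH, corresponds to the arc $\ova{x_7x_8^+}$ of $G_g$ rather than $\ova{x_7x_8}$, and that the colour-$K$ spoke $\ova{x_8w}$ requires $x_8\in Y^g_w$ (density $p^g_{w,K}$) instead of $x_8\in\ov{Y}_w$. Accordingly I would call a tuple $(w;x_1,\dots,x_8)$ of a hub and $8$ distinct vertices a \emph{$K$-prewheel} if $x_8\in Y^g_w$, $x_1,\dots,x_7\in\ov{Y}_w$, and $\ova{x_8x_1},\ova{x_1x_2},\dots,\ova{x_6x_7}$ and $\ova{x_7x_8^+}$ all lie in $G$; write $P_K(\ova e)$ for the set of $K$-prewheels using $\ova e$ (where $\ova{x_8w}$ is the colour-$K$ spoke, a colour-$0$ spoke is any $\ova{x_iw}$ with $i\le 7$, the colour-$K$ rim arc is $\ova{x_7x_8}$, and a colour-$0$ rim arc is $\ova{x_8x_1}$ or any $\ova{x_ix_{i+1}}$ with $i\le 6$). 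Every copy of $\wK$ in $J_g$ using $\ova e$ arises from a $K$-prewheel using $\ova e$ by independently keeping each colour-$0$ rim arc in $G_g$ with colour $0$ (probability $p^g_*$ each), keeping $\ova{x_7x_8^+}$ in $G_g$ with colour $K$ (probability $p^g_K$), and giving each colour-$0$ spoke colour $0$ (probability $p^g_{w,0}/\ov{p}_w$ each).

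As in Lemma~\ref{degWc}, I would count $K$-prewheels by building the rim vertex by vertex, at each step intersecting the current $G$-neighbourhood with $\ov{Y}_w$ (or, for the step choosing $x_8$, with $Y^g_w$) using Lemma~\ref{lem:int}.iv, and invoking typicality of $G$ at the one closing step. The only point of care is the twisted arc: I would order the choices so that $x_8$ is never the last rim vertex fixed, so that when $\ova{x_7x_8^+}$ is used exactly one of $x_7,x_8$ is already determined and the constraint contributes only the ordinary in-neighbourhood $N^-_G(x_8^+)$ of the single vertex $x_8^+$, or the ordinary shifted set $(N^+_G(x_7))^-$ (to which Lemma~\ref{lem:int}.iv still applies) — never an intersection of two shifted neighbourhoods — so no appeal to the random cyclic order is needed here. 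This gives $|P_K(\ova e)|$ up to a factor $1\pm3\eps$, in the expected monomial shape in $\aA,n,\ov{p}_w$ and $p^g_{w,K}$; multiplying by the DIGRAPH probabilities above and concentrating via Lemma~\ref{azuma} — first conditioning on the likely event that the relevant subcounts are near their means, to bound the Lipschitz constants in the regime where $p^g_K$ can be as small as $n^{-.2}$ — yields $|W^g_K(\ova e)|$ whp. When $\ova e=\ova{xw}$ is a spoke the hub is pinned to $w$, and a routine calculation (all powers of $\aA$, $n$, $p^g_*$, $p^g_K$ and $\ov{p}_w$ cancelling) confirms that $\mathbb{E}|W^g_K(\ova{xw})|$ equals $(1\pm3\eps)N_{c'}p^g_{w,K}/p^g_{w,c'}$ times $W^g_{w,K}/(p^g_{w,K}n)$, where $N_{c'}$ is the number of colour-$c'$ spokes of $\wK$ ($1$ if $c'=K$, $7$ if $c'=0$); this gives (i). No additive error is needed because $p^g_{w,K}\ge n^{-.2}/8$ keeps the main term from being too small while the relative concentration error is $o(1)$.

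For (ii) with $d(x,y)\ge 3d$ I would write $\hat{W}^g_K(\ova{xy})=\sum_w\hat{W}^g_K(xyw)$, summed over copies of $\wK$ through $\ova{xy}$, $\ova{xw}$ and $\ova{yw}$, compute $\mathbb{E}\hat{W}^g_K(xyw)$ from the prewheel count as above while summing over the admissible rim positions of $\ova{xy}$ ($1$ colour-$K$ rim arc if $c'=K$, $7$ colour-$0$ rim arcs if $c'=0$, each contributing equally up to a factor $1\pm O(\eps)$), and then evaluate the resulting sum over $w$ by the first estimate of Lemma~\ref{lem:int}.v: in each case the $p_{w,\cdot}$-powers conspire so that that sum equals $\sum_w p^g_{w,K}=|W|p^g_K=\aA n\,p^g_K$ up to $O(sn^{3/4})$, because $\{x,y\}$ (equivalently, the singletons $\{x\}$ and $\{y\}$) is $3d$-separated by hypothesis. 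After concentrating via Lemma~\ref{azuma}, this yields $\hat{W}^g_K(\ova{xy})=(1\pm 4\eps)N_{c'}q^g_K/q^g_{c'}$ as claimed.

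The one real departure from Lemma~\ref{degWc} is the final lower bound, for $c'=0$ and \emph{arbitrary} $d(x,y)$. Here I would retain only the copies of $\wK$ in which $\ova{xy}$ occupies the colour-$0$ rim position $\ova{x_1x_2}$ and in which $x_8$ is moreover $3d$-separated from $x_1,\dots,x_7$; the latter discards only an $o(1)$ fraction of the choices of $x_8$, since $|Y^g_w\cap N^-_G(x_1)|=(1-o(1))\aA n\,p^g_{w,K}\gg sd$ whp and (choosing $x_8$ before $x_7$) no shifted intersection arises. Thus whp $\hat{W}^g_K(\ova{xy})\ge (1-o(1))(1-\eps)^2(\aA p^g_*n)^{-1}\sum_{w\,:\,x,y\in\ov{Y}_w}p^g_{w,K}\ov{p}_w^{-2}$, and I would bound the last sum by the \emph{second} estimate of Lemma~\ref{lem:int}.v with $S=\es$ and $S'=\{x,y\}$ — for which $(S,S')$ is vacuously $3d$-separated, so $x$ and $y$ need not be separated from each other — to get $\sum_{w\,:\,x,y\in\ov{Y}_w}p^g_{w,K}\ov{p}_w^{-2}\ge 2^{-2s}\sum_w p^g_{w,K}= 2^{-2s}\aA n\,p^g_K$. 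Hence $\hat{W}^g_K(\ova{xy})\ge(1-o(1))(1-\eps)^2 2^{-2s}p^g_K/p^g_*>2^{-2s-1}p^g_K/p^g_*$. I expect the main obstacle to be purely organisational: keeping straight the four ``position of $\ova e$'' cases, the $Y^g_w$-versus-$\ov{Y}_w$ bookkeeping for the colour-$K$ spoke, and the order in which the rim vertices are chosen, so that every neighbourhood that appears is either a genuine $G$-neighbourhood (controlled by typicality and $\aA n$-regularity of $G$) or is intersected only with $Y^g_w$ or $\ov{Y}_w$ (controlled by Lemma~\ref{lem:int}).
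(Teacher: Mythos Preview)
Your proposal is correct and follows essentially the same route as the paper: define $(K,g)$-prewheels (the paper phrases them as oriented $8$-paths in $G$ from $z$ to $z^+$ with $\ova{zw}\in J^K_g$ and internal vertices in $\ov{Y}_w$), count them vertex by vertex via Lemma~\ref{lem:int}.iv and typicality of $G$, multiply by the DIGRAPH probabilities, concentrate with Lemma~\ref{azuma}, and for (ii) sum over $w$ via Lemma~\ref{lem:int}.v.

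Two small differences worth noting. For (ii) with $c'=0$ and $d(x,y)\ge 3d$, the paper makes explicit the case split you compress: the rim position $\ova{x_8x_1}$ forces $x=x_8$, so $\ova{xw}\in J^K_g$ and the relevant sum over $w$ uses $S=\{x\}$, $S'=\{y\}$ in Lemma~\ref{lem:int}.v, whereas the remaining six positions have both $x,y\in\ov{Y}_w$ and use $S=\es$, $S'=\{x,y\}$; your observation that each of the seven positions contributes $(1\pm O(\eps))p^g_K/p^g_*$ after the $w$-sum is exactly what the paper verifies by doing the two cases separately. For the lower bound, the paper retains all six internal-only positions (getting a factor $6$), while you keep just one position and add a $3d$-separation condition on $x_8$; the latter is unnecessary (only $O(1)$ choices of $x_8$ give $x_8^+\in\{x_1,\dots,x_6\}$, which is the only genuine collision to avoid) but harmless, and either multiplicative prefactor exceeds $1/2$, so both reach $2^{-2s-1}p^g_K/p^g_*$.
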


\begin{proof}
For (i), we start by counting $(K,g)$-prewheels, which 
we define to consist of a hub $w \in W$ and an oriented $8$-path 
in $G$ between $z$ and $z^+$ for some $z$ such that
$\ova{zw} \in J^K_g$ and $\ova{z'w} \in \ov{J}$
for all internal vertices $z'$ of the path.
For any arc $\ova{e}$ we let $P^g_K(\ova{e})$ be the set of 
$(K,g)$-prewheels using $\ova{e}$.

To estimate $|P^g_K(\ova{xw})|$, suppose first that $c'=K$.
We require $z=x$. We choose the vertices of the path one by one.
At $6$ steps there are $\aA n \ov{p}_w \pm 3sn^{3/4}$ options,
and at the last step 
$((1 \pm \eps)\aA)^2 \ov{p}_w n \pm 3sn^{3/4}$
options of a common outneighbour of some vertex and $z^+$, so 
$|P^g_K(\ova{xw})| = (1 \pm 3\eps) \aA^8 (\ov{p}_w n)^7$.
On the other hand, if $c'=0$ then there are $7$ choices
for the position of $x$ as an internal vertex,
dividing the path into two segments.
We construct one segment by choosing its vertices one by one,
and then do the same for the other segment, starting with one of 
length $\le 4$ so that $\{z,z^+\}$ is not the last choice. 
At the step where we choose $\{z,z^+\}$, there is some vertex $v$ 
on the path for which we need the arc $\ova{vz}$ or $\ova{vz}^+$.
We also require $z \in N^-_{J^K_g}(w)$.
The number of options is $\aA n p^g_{w,K} \pm 3sn^{3/4}$ 
by Lemma \ref{lem:int}.iv, with 
$R=\{w\}$, $R=\es$ and $U=N_G^+(v)$ 
or $U=N_G^+(v)^- = \{z: \ova{vz}^+ \in G\}$.
There are also $5$ steps with
$\aA n \ov{p}_w \pm 3sn^{3/4}$ options,
and at the last step 
$((1 \pm \eps)\aA)^2 \ov{p}_w n \pm 3sn^{3/4}$ options, 
so $|P^g_K(\ova{xw})| = (1 \pm 3\eps) 
7 \aA^8 p^g_{w,K} (\ov{p}_w)^6 n^7$
(as $p^g_{w,K} \ge n^{-.2}/8$).

To estimate $|W^g_K(\ova{xw})|$, we first consider $c'=K$.
For any $(K,g)$-prewheel containing $\ova{xw}$, 
independently we include the last path arc (to $z^+$)
in $J^K_g$ with probability $p^g_K$,
the other $7$ path arcs in $J^0_g$ 
with probability $p^g_*$,
and give $\lova{wz}'$ for each internal vertex $z'$ colour $0$ 
with probability $p^g_{w,0}/\ov{p}_w$, so
$$
\mb{E}|W^g_K(\ova{xw})| 
= (1 \pm 3\eps) \aA p^g_K (\aA p^g_* p^g_{w,0} n)^7
= (1 \pm 3\eps) W^g_{w,K}/p^g_{w,K}n.
$$
As $|W^g_K(\ova{xw})|$ is $O(n^{13})$-varying,
by Lemma \ref{azuma} whp $|W^g_K(\ova{xw})| 
= (1 \pm 3.1\eps) W^g_{w,K}/p^g_{w,K}n \pm n^{6.51}$,
so $\hat{W}^g_K(\ova{xw}) = 1 \pm 4\eps$
(using $p^g_K>n^{-.2}$).

For $c'=0$ we have a similar calculation.
Indeed, the path arcs are again correct with
probability $(p^g_*)^7 p^g_K$,
and the arcs $\lova{wz}'$ (now excluding $z'=x$)
are correct with probability $(p^g_{w,0}/\ov{p}_w)^6$,
so
$$
\mb{E}|W^g_K(\ova{xw})| 
= (1 \pm 3\eps) 7 \aA p^g_K p^g_{w,K} 
(p^g_{w,0})^6 (\aA p^g_* n)^7
= (1 \pm 3\eps) 7 W^g_{w,K}/p^g_{w,0}n.
$$
By Lemma \ref{azuma} whp
$|W^g_K(\ova{xw})| = (1 \pm 4\eps) 7W^g_{w,K}/p^g_{w,0}n 
\pm n^{6.51}$, so $\hat{W}^g_K(\ova{xw}) 
= (1 \pm 4\eps) 7p^g_{w,K}/p^g_{w,0}$.

For (ii), we write $\hat{W}^g_K(\ova{xy}) 
= \sum_{w \in W} |\hat{W}^g_K(xyw)|$, 
where $\hat{W}^g_K(xyw)$ is the sum of $(W^g_{w,K})^{-1}$
over the set $W^g_K(xyw)$ of copies of $\wK$ in $J_g$ 
using $\ova{xy}$, $\ova{xw}$ and $\ova{yw}$. For each $w$
we consider the set  $P^g_K(xyw)$ of $(K,g)$-prewheels
using $\{\ova{xy},\ova{xw},\ova{yw}\}$

Suppose first that $\ova{xy}$ has colour $c'=K$. 
We assume $d(x,y) \ge 3d$ (or there is nothing to prove).
We must have $y=z$ and in our prewheels the oriented $8$-paths 
from $z$ to $z^+$ must end with the arc $\ova{xz}^+$,  
corresponding to $\ova{xy} \in J^K$ under twisting.
We need $w \in N^+_{\ov{J}}(x) \cap N^+_{J^K_g}(y)$
so that $\ova{yw}$ has colour $K$ and $\ova{xw}$ can
receive colour $0$. Choosing rim vertices sequentially,
now $\{z,z'\}$ is already fixed, there are
$5$ steps with $\aA n \ov{p}_w \pm 3sn^{3/4}$ options,
and at the last step 
$((1 \pm \eps)\aA)^2 \ov{p}_w n \pm 3sn^{3/4}$ options, 
so $|P^g_K(\ova{xw})| = (1 \pm 3\eps) 
 \aA^7  (\ov{p}_w)^6 n^6$.

Now consider which of these prewheels extend to wheels in $W^g_K(xyw)$, 
according to the following independent random decisions:
the other $7$ arcs of the oriented $8$-path excluding $\ova{xy}$
are each correct with probability $p^g_*$,
we already have $\ova{yw} \in J^K_g$,
and for each of the $7$ internal vertices $z'$ we have
$\ova{z'w}$ correct with probability $p^g_{w,0}/\ov{p}_w$.
Therefore \[ \mb{E} \hat{W}^g_K(xyw)
= (1 \pm 3\eps)  (\aA p^g_*)^7  (p^g_{w,0})^7 
\ov{p}_w^{-1} n^6 p^g_{w,K} n (W^g_{w,K})^{-1}
= (1 \pm 3\eps) (\aA p^g_K \ov{p}_w n )^{-1}. \]
By Lemma \ref{azuma} whp $\hat{W}^g_K(\ova{xy}) 
= (1 \pm 3.1\eps) (\aA p^g_K n)^{-1}  H \pm n^{-.2}$, 
with $H = \sum \{ \ov{p}_w^{-1}  : 
w \in N^+_{\ov{J}}(x) \cap N^+_{J^K_g}(y) \}$.
We estimate $H$ by Lemma \ref{lem:int}.v 
with $S=\{y\}$ and $S'=\{x\}$. As $d(x,y) \ge 3d$,
whp $H = |W| p^g_K \pm 5sn^{3/4}$, giving
$\hat{W}^g_K(\ova{xy}) = 1 \pm 4\eps$.

Now suppose that $\ova{xy}$ has colour $c'=0$. 
For the hub $w$ we require $\ova{yw} \in J^0$ 
and $\ova{xw}$ in $J^K$ or $J^0$.
We first consider the contribution
from $\ova{xw} \in J^K$, when the 
first vertex of the oriented $8$-path must be $z=x$.
The estimate of $|P^g_K(\ova{xw})|$ 
is the same as when $c'=K$, and the probability factors 
are the same except that the factor for the last path
edge (to $z^+$) is now $p^g_K$ instead of $p^g_*$.
If $d(x,y) \ge 3d$ then the same calculation with
Lemma \ref{azuma} and Lemma \ref{lem:int}.v 
shows that the contribution to $\hat{W}^g_K(\ova{xy})$
from $w \in N^+_{\ov{J}}(x) \cap N^+_{J^K_g}(y) \}$
is $(1 \pm 4\eps) (p^g_* n)^{-1}$.

Now we consider the contribution from $\ova{xw} \in J^0$.
There are $6$ positions for $\ova{xy}$ on the path 
avoiding $\{z,z'\}$. The estimate of $|P^g_K(\ova{xw})|$ 
is the same as before except that one factor of $\ov{p}_w$ 
is replaced by $p^g_{w,K}$ (at the choice of $\{z,z'\}$).
The probability factors are the same as in the previous
calculation for $\ova{xw} \in J^K$, so 
$\mb{E} \hat{W}^g_K(xyw) = (1 \pm 3\eps) p^g_{w,K} 
(\aA p^g_* \ov{p}_w^2 n )^{-1}$.
By Lemma \ref{azuma} whp the contribution 
to $\hat{W}^g_K(\ova{xy})$ from such $w$ is
$(1 \pm 3.1\eps) 6 (\aA p^g_* n)^{-1} H$, 
with $H = \sum \{ h(w)  : 
w \in N^+_{\ov{J}}(x) \cap N^+_{\ov{J}}(y) \}$,
$h(w) = p^g_{w,K} (\ov{p}_w)^{-2}$. 

We estimate $H$ by Lemma \ref{lem:int}.v with $S=\es$, $S'=\{x,y\}$.
As $(S,S')$ is $3d$-separated (vacuously) whp
$H \ge 2^{-2s} \sum_{w \in W} h(w) = 2^{-2s} |W|p^g_K$,
so $\hat{W}^g_K(\ova{xy}) > 2^{-2s-1} p^g_K/p^g_*$.
Now suppose $d(x,y) \ge 3d$. Then $S \cup S'$ 
is $3d$-separated, so whp $H = |W|p^g_K \pm 5sn^{3/4}$. 
The contribution here to $\hat{W}^g_K(\ova{xy})$ is 
$(1 \pm 4\eps) 6 p^g_K/p^g_* $, so altogether
$\hat{W}^g_K(\ova{xy}) = (1 \pm 4\eps) 7 p^g_K/p^g_*$.
\end{proof}

We combine the above estimates to deduce the main lemma
of this section, establishing wheel regularity.
Let \[ \hat{W}^g(\ova{e}) = \sum \{ 
\hat{W}^g_c(\ova{e}): c \in [3,K] \}.\]

\begin{lemma} \label{reg}
Suppose we add $\ova{e}$ to $J$ in any colour,
such that if $\ova{e} \in J[V]$
then $\ova{e}=\ova{xy}$ with $d(x,y) \ge 3d$, 
and if $\ova{e}$ has a vertex in $W$ then it is an endvertex.
Then $\hat{W}^g(\ova{e}) = 1 \pm 5\eps$.
\end{lemma}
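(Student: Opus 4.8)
The plan is to express $\hat{W}^g(\ova{e}) = \sum_{c=3}^{K-1} \hat{W}^g_c(\ova{e}) + \hat{W}^g_K(\ova{e})$ and to combine the two previous lemmas by carefully summing over the cases according to where $\ova{e}$ sits (inside $J[V]$ or from $V$ to $W$) and in which colour it was added. The key identity that must emerge from the sum is the \emph{divisibility-type} cancellation: the coefficients $N_{c'}$, the ratios $p^g_{w,c}/p^g_{w,c'}$ (or $cp^g_c/p^g_*$, or $q^g_K/q^g_{c'}$) assembled across all $c$ add up to exactly $1$, up to the accumulated error. This is really a reflection of the fact that $J_g$ was constructed to satisfy the divisibility hypotheses of Theorems \ref{decompL} and \ref{decompK}, combined with the degree estimates of Lemma \ref{deg}.

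First I would dispose of the case $\ova{e} = \ova{xw} \in J^{c'}_g[V,W]$ for some colour $c' \in [3,K-1] \cup \{0, K\}$. By Lemma \ref{degWc}(i) and Lemma \ref{degWK}(i), each wheel family $\wC$ with $c \in [3,K-1]$, respectively $\wK$, contributes $(1 \pm 4\eps) N_{c'} p^g_{w,c}/p^g_{w,c'} \pm n^{-.2}$ to $\hat{W}^g_c(\ova{xw})$ (interpreting $p^g_{w,c}$ with $c=K$ appropriately, and $N_{c'}$ as $1$ or $c-1$ or $7$ depending on whether the arc sits on the rim in colour $0$ or is a special spoke). If $c' = c$ (the arc is the distinguished spoke of its own wheel type), the contribution from $\wC$ is $1 \pm 4\eps$ and from every other wheel type it is either $0$ (since $p^g_{w,c}/p^g_{w,c'}$ has the right numerator vanishing when $c \ne c'$, using that a spoke of colour $c'$ cannot lie on a wheel whose special spoke has a different colour) or a genuine term. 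I would use the precise structure: a spoke $\ova{xw}$ of colour $c'$ can appear in a $\wC$-wheel only as its special spoke (forcing $c = c'$), or in a $\wK$-wheel only in the $c'$-th position as an ordinary spoke; so only one wheel type contributes the main term $1 \pm 4\eps$ and all others contribute $0$, giving $\hat{W}^g(\ova{xw}) = 1 \pm 5\eps$ after absorbing the $n^{-.2}$ errors into $\eps$.

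Next I would handle $\ova{e} = \ova{xy} \in J^{c'}_g[V]$ with $d(x,y) \ge 3d$. Here the arc can lie on the rim of many wheel types simultaneously. If $c' = 0$, then by Lemma \ref{degWc}(ii) each $\wC$ ($3 \le c < K$) contributes $(1 \pm 4\eps) c p^g_c / p^g_* \pm n^{-.2}$ and by Lemma \ref{degWK}(ii) the $\wK$-term contributes $(1 \pm 4\eps) \cdot 7 \cdot p^g_K/p^g_*$; summing gives $(1 \pm 4\eps) p^g_*^{-1}\bigl(\sum_{c=3}^{K-1} c p^g_c + 8 p^g_K\bigr) \pm Kn^{-.2}$. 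The bracket equals $p^g_*$ by the definition $p^g_{w,*} = p^g_{w,K} + \sum_c p^g_{w,c}$ rearranged as $8p^g_{w,K} = p^g_w - \sum_c c p^g_{w,c}$ together with $p^g_* = p^g_w - p^g_{w,K}$ (so $p^g_* = 8p^g_K + p^g_K \cdot 7 \cdot$... — more precisely $p^g_* - \sum_c c p^g_c$; I would verify $\sum_c c p^g_c + 8 p^g_K = p^g_*$ directly from those averaged identities), yielding $\hat{W}^g(\ova{xy}) = 1 \pm 5\eps$. If $c' = K$, only the $\wK$-term is nonzero (a colour-$K$ rim arc exists only in $\wK$), and Lemma \ref{degWK}(ii) gives $(1 \pm 4\eps) q^g_K/q^g_K = 1 \pm 4\eps$, which is fine.

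\textbf{Main obstacle.} The one genuinely delicate point is the \emph{combinatorial bookkeeping} of which wheel types an arc of a given colour and position can belong to, and the resulting vanishing of the ``wrong'' terms — this is exactly the content of the divisibility structure and must be tracked colour by colour. In particular, for a colour-$0$ spoke $\ova{xw}$ I must be sure it can appear as an ordinary spoke in at most one position of one wheel family relevant to the main term, and that the $p^g_{w,c}/p^g_{w,0}$ factors from the many wheel types do \emph{not} all add with positive sign (they must instead mostly vanish). I expect the resolution is that for a spoke of colour $0$ the correct statement is that only the wheel type matching the colour class used for its hub contributes, and Lemmas \ref{degWc}(i)/\ref{degWK}(i) with $c' = 0$ are to be read as giving the contribution \emph{per wheel type} with the understanding that $p^g_{w,c} = 0$ unless that type is actually built at $w$; so here too the sum telescopes to $\bigl(\sum_c p^g_{w,c} + p^g_{w,K}\bigr)/p^g_{w,0} \cdot (\text{...})$ — I would recheck against the averaged identity $p^g_{w,0} = p^g_{w,*} - \sum_c p^g_{w,c}$ to confirm the total is $1 \pm 5\eps$. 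Beyond this, the proof is just assembling the whp events from the two lemmas (polynomially many, so a union bound applies) and absorbing the $Kn^{-.2}$ and $K \cdot 4\eps$ errors, which is harmless since $K^{-1} \gg \eps$ would need care — but in fact the number of wheel types is at most $K$ and each error is $4\eps$, so the total is $4K\eps$; I would instead note that for any fixed arc only $O(1)$ wheel types contribute a nonzero main term (the rest contribute exactly $0$, not $4\eps$), so the accumulated error stays $5\eps$.
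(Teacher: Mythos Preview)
Your overall case split and approach match the paper's, but the handling of the colour-$0$ cases contains a genuine gap.

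For a colour-$0$ spoke $\ova{xw}$, your expectation that ``only the wheel type matching the colour class used for its hub contributes'' and that the other terms ``mostly vanish'' is wrong. A colour-$0$ spoke sits as an ordinary spoke in \emph{every} wheel type $\wC$ for $3 \le c \le K-1$ and in $\wK$, and all of these contribute positively. The correct sum, reading Lemmas \ref{degWc}(i) and \ref{degWK}(i) with $c'=0$, is
\[
\hat{W}^g(\ova{xw}) = (1 \pm 4\eps)\,\frac{7p^g_{w,K}}{p^g_{w,0}}
 + \sum_{c=3}^{K-1}\Bigl( (1 \pm 4\eps)\,\frac{(c-1)p^g_{w,c}}{p^g_{w,0}} \pm n^{-.2} \Bigr),
\]
and the main terms sum to exactly $1$ because $p^g_{w,0} = 7p^g_{w,K} + \sum_{c=3}^{K-1}(c-1)p^g_{w,c}$. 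This identity follows by combining the definitions $p^g_{w,0} = p^g_{w,*} - \sum_c p^g_{w,c}$, $p^g_{w,*} = p^g_w - p^g_{w,K}$, and $8p^g_{w,K} = p^g_w - \sum_c c\,p^g_{w,c}$. Your proposed sum $(\sum_c p^g_{w,c} + p^g_{w,K})/p^g_{w,0}$ drops the essential $N_0$ factors (namely $c-1$ and $7$) and does not equal $1$. Similarly, in the colour-$0$ rim case the bracket should read $\sum_c c\,p^g_c + 7p^g_K$, not $8p^g_K$; the relevant identity is $p^g_* = 7p^g_K + \sum_c c\,p^g_c$.

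Your error analysis is also off. In both colour-$0$ cases up to $K$ wheel types contribute nonzero main terms, so the claim that ``only $O(1)$ wheel types contribute'' is false. The reason the error stays $5\eps$ is not that terms vanish but that the $(1 \pm 4\eps)$ factors multiply nonnegative quantities $a_c$ with $\sum_c a_c = 1$, giving $\sum_c (1 \pm 4\eps)a_c = 1 \pm 4\eps$; the at most $K$ additive $n^{-.2}$ errors contribute $Kn^{-.2} \ll \eps$, yielding $1 \pm 5\eps$.
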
 

\begin{proof}
By Lemmas \ref{degWc} and \ref{degWK} we can
analyse the various cases as follows.
\begin{itemize}
\item
If $\ova{e} \in J^c_g[V,W]$ with $c \ne 0$
then $\hat{W}^g(\ova{e}) = \hat{W}^g_c(\ova{e}) = 1 \pm 5\eps$.
\item
If $\ova{xy} \in J^K_g[V]$ with $d(x,y) \ge 3d$ then 
$\hat{W}^g(\ova{e}) = \hat{W}^g_K(\ova{e}) = 1 \pm 5\eps$.
\item
If $\ova{e} \in J^0_g[V,W]$ then
$$
\hat{W}^g(\ova{e}) = 
(1 \pm 4\eps) 7 p^g_{w,K}/p^g_{w,0}  + \textstyle\sum_{c=3}^{K-1}
\big( (1 \pm 4\eps) (c-1) p^g_{w,c}/p^g_{w,0} \pm n^{-.2} \big)
= 1 \pm 5\eps,
$$
as
$p^g_{w,0} = 7 p^g_{w,K} +  \sum_{c=3}^{K-1} (c-1) p^g_{w,c}$.
\item
If $\ova{xy} \in J^0_g[V]$ with $d(x,y) \ge 3d$
then
$$
\hat{W}^g(\ova{e}) = (1 \pm 4\eps) 7 p^g_K/p^g_*
+ \textstyle\sum_{c=3}^{K-1} \big( 
 (1 \pm 4\eps) cp^g_c/p^g_* \pm n^{-.2} \big)
= 1 \pm 5\eps,
$$
as $p^g_* = p_g - p^g_K
= 7p^g_K + \sum_{c=3}^{K-1} cp^g_c$. \vspace{-0.5cm}
\end{itemize}
\end{proof}

\section{Approximate decomposition} \label{sec:approx}

Here we describe the approximate decomposition of $G_2$.
Recall that at the start of section \ref{sec:alg}
we partitioned each factor $F_w$ into subfactors 
$F^1_w$ and $F^2_w$, that each $F^g_w$ 
has $q^g_{w,c} n$ cycles of length $c \in [3,K-1]$,
and $p^g_{w,c} = (1-\eta)q^g_{w,c}$.
We will embed almost all of each $F^2_w$ in $G_2$.
We say $F'_w \sub F^2_w$ is valid if it does not have
any independent arcs (i.e.\ arcs $\ova{xy}$ such that 
both $x$ and $y$ have total degree $1$ in $F'_w$)
and if $F^2_w$ contains a path then $F'_w$
contains the arcs incident to each of its ends.

\begin{lemma} \label{lem:approx}
There are arc-disjoint digraphs $G^2_w \sub G_2$ for $w \in W$,
where each $G^2_w$ is a copy of some valid $F'_w \sub F^2_w$
with $V(G^2_w) \sub N^-_{J_2}(w)$, such that 
\begin{enumerate}
\item $G_2^- = G_2 \sm \bigcup_{w \in W} G^2_w$
has maximum degree at most $5d^{-1/3}n$,
\item the digraph $J_2^-$
obtained from $J_2[V,W]$ by deleting all $\ova{xw}$ 
with $x \in V(G^2_w)$ has maximum degree at most $5d^{-1/3}n$, and
\item any $x \in V$ has degree $1$ in $F'_w$
for at most $n/\sqrt{d}$ choices of $w$.
\end{enumerate}
\end{lemma}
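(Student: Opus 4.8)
The plan is to realise the desired partial embeddings as an approximate wheel decomposition of $J_2$, which we obtain from the wheel‑regularity established in Lemma~\ref{reg} together with a nibble/rounding argument (the result of Kahn~\cite{KaLP} quoted in Section~\ref{sec:reg}), and then translate back through the twisting bijection of Section~\ref{sec:over}. First I would discard from $J_2[V]$ the at most $6dn$ arcs $\ova{xy}$ with $d(x,y)<3d$; these go into $G_2^-$ and $J_2^-$ and cost only $O(d)$ per vertex, which is negligible against the target $5d^{-1/3}n$. Then form the hypergraph $\mathcal H$ whose vertices are the remaining arcs of $J_2$ (the surviving arcs of $J_2[V]$ and all arcs of $J_2[V,W]$) and whose hyperedges are the arc‑sets, each of size at most $2K$, of the copies in $J_2$ with hub in $W$ of $\wC$ for $3\le c<K$ and of $3d$‑separated $\wK$. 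A matching in $\mathcal H$ corresponds to an arc‑disjoint family in $G_2$ of short cycles and of $8$‑path segments chaining along the $\mc{Y}^2_w$‑intervals into longer paths, each such configuration lying in the in‑neighbourhood of its hub by the way $J_2$ was built.

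Next I would check that $\mathcal H$ satisfies the hypotheses of \cite{KaLP}. Weighting each wheel as in Section~\ref{sec:reg}, Lemma~\ref{reg} says that the total weight of wheels through any surviving arc is $1\pm5\eps$, while Lemmas~\ref{degWc} and~\ref{degWK} give that each weight has order $n^{1-c}$ and each arc lies in $\Theta(n^{c-1})$ wheels of each shape; so $\mathcal H$ carries a fractional matching that is nearly perfect and nearly uniform on each shape. For simplicity one checks that any two arcs of $J_2$ lie together in only $O(n^{c-2})$ common $c$‑wheels: fixing two arcs pins down at least two vertices of the wheel, and the remaining vertices are counted greedily using typicality of $G$ together with Lemmas~\ref{lem:int} and~\ref{deg}, exactly as in the proofs of Lemmas~\ref{degWc} and~\ref{degWK}. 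Applying \cite{KaLP} (equivalently, iterating the Rödl nibble for a bounded number of rounds) then yields a matching $\mathcal M$ of $\mathcal H$ whose set of uncovered arcs has maximum degree at most $\delta\,\Delta(J_2)$ for any prescribed constant $\delta>0$; since $\Delta(J_2)=O(n)$ and $\eps\ll d^{-1}\ll\aA$, taking $\delta$ small enough makes this below $4d^{-1/3}n$.

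Now translate back. For each $w\in W$ let $G^2_w\subseteq G_2$ be the union of the short cycles and $8$‑path chains corresponding to matched wheels with hub $w$; since $\mathcal M$ is a matching these are arc‑disjoint in $J_2$, hence (via the bijection) in $G_2$, and $G^2_w$ is a copy of some $F'_w\subseteq F^2_w$. The one genuine structural point is that a chain for an interval $[x^w_i,(y^w_i)^-]$ naturally ends at the successor $y^w_i$, and $y^w_i\notin N^-_{J_2}(w)$; I would fix this by truncating each chain by a single $8$‑cycle, so that it instead ends inside the interval, at a vertex of $Y^2_w\subseteq N^-_{J_2}(w)$. This discards $O(1)$ wheels per chain, hence $O(n/d)$ wheels per hub, contributing $o(d^{-1/3}n)$ to every leftover degree by a routine concentration estimate, so (i) (for the rim arcs, which are exactly the arcs of $G_2$ that get used) and (ii) (for the spoke arcs, all of which are vertices of $\mathcal H$) hold with the claimed bound $5d^{-1/3}n$. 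To make each $F'_w$ valid I would further delete the at most one wheel per hub whose removal of a neighbouring wheel would create an independent arc, and, when $F^2_w$ contains its (unique) long path, reserve its two end‑arcs in advance or attach them by a single greedy extension — again touching $O(1)$ arcs per vertex. Finally, for (iii), a vertex $x$ has degree $1$ in $F'_w$ only if it is an endpoint of a path segment of $G^2_w$, i.e.\ lies within a constant distance of an endpoint of some $\mc{Y}^2_w$‑interval; by the interval analysis in Lemma~\ref{lem:int} this happens for $O\!\big((2s)^{2s}n/d\big)\le n/\sqrt d$ values of $w$, where the last inequality uses that $d$ is chosen far larger than $(2s)^{2s}$ in the hierarchy~\eqref{hierarchy}.

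The main obstacle I expect is the bookkeeping rather than any single hard step: one must confirm that restricting to $3d$‑separated $\wK$'s (so that Lemma~\ref{reg} is available) still leaves all codegrees a factor $n$ below the degrees — this is where typicality of $G$ and the interval estimates are used in earnest — and then keep simultaneous track of the nibble error, the $O(n/d)$ truncation loss, and the $O((2s)^{2s}n/d)$ interval‑endpoint count, so that they collapse to the clean thresholds $5d^{-1/3}n$ and $n/\sqrt d$; this is precisely why $d$ appears so late (i.e.\ is taken so large) in~\eqref{hierarchy}. The reformulation as a wheel hypergraph, the application of \cite{KaLP}, and the final validity adjustments are all routine.
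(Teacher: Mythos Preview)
Your overall strategy matches the paper's: build the wheel hypergraph on $J_2$, apply Kahn's result using the weights from Lemma~\ref{reg}, and read off $G^2_w$ from the matched wheels with hub $w$. The genuine gap is the step you pass over in one clause, namely that ``$G^2_w$ is a copy of some $F'_w\subseteq F^2_w$''. This is not automatic and is where most of the work lies. The digraph $G^2_w$ is a disjoint union of $c$-cycles (one per matched $\wC$) and paths of length a multiple of $8$ (from chains of matched $\wK$'s), and you must check that (a) for each $c<K$ the number of $c$-cycles does not exceed $q^2_{w,c}n$, and (b) the collection of paths can be packed into the long cycles/path of $F^2_w$. For (a), the $c$-cycle count is at most $|N^-_{J^c_2}(w)| = p^2_{w,c}n \pm O(n^{3/4})$, and when $p^2_{w,c}$ is tiny the error term swamps the target; the paper handles this by first deleting ``bad'' spokes with $p^2_{w,c}<n^{-.1}$. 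For (b) one needs $8|Y^2_w|\le (1-\eta/2)L'_w$ where $L'_w$ is the total long-cycle length in $F^2_w$, and then a greedy argument to fit paths of length $\le 8d$ into components of length $\ge K$ (using $K\gg d$); bad-spoke deletion for $p^2_{w,K}<d^{-1/3}$ is again needed so that $L'_w$ is not too small for the greedy packing to succeed. Your validity discussion (``delete the at most one wheel per hub \dots'') does not address either of these; validity in the paper's sense comes out of the greedy packing with $2$-arc buffers, not from trimming wheels.

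Your argument for (iii) is also incomplete. A vertex $x$ is a path endpoint in $G^2_w$ either at an interval boundary (which you count) \emph{or} where a chain is broken by an unmatched $\wK$, i.e.\ where $\ova{x^\pm w}\in J^K_2[V,W]\setminus\bigcup M$. You omit this second source entirely; it is not bounded by the $O((2s)^{2s}n/d)$ interval count but by the Kahn matching's coverage of the sets $F^K_v=\{\ova{e}\in J^K_2[V,W]:v\in\ova{e}\}$, which must therefore be included in the family of test sets fed to Kahn's theorem (as the paper does), rather than just asking for small maximum leftover degree.
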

 
\begin{proof}
Say that an arc $\ova{vw}$ with $v \in V$ and $w \in W$ is \emph{bad}
there is some $c \in [3,K-1]$ such that
$\ova{vw} \in J^c$ and $p^2_{w,c} < n^{-.1}$,
or $\ova{vw} \in J^K$ and $p^2_{w,K} < d^{-1/3}$.
The expected bad degree of $v \in V$
is at most $(Kn^{-.1}+d^{-1/3})n$
so by Chernoff bounds we can assume that
every $v \in V$ has bad degree at most $2d^{-1/3}n$.
Let $J'_2$ be obtained from $J_2$ by deleting 
all bad arcs and
all $\ova{xy} \in J^K_2[V]$ with $d(x,y)<3d$.
We consider the auxiliary hypergraph $\mc{H}$
whose vertices are all arcs of $J'_2$ and 
whose edges correspond to all copies of the 
coloured wheels $\wK$ or $\wC$ with $c \in [3,K-1]$.
We recall that
$W^g_{w,c} = n^c p^g_{w,c} (p^g_{w,0})^{c-1} (\aA p^g_*)^c$
and $W^g_{w,K} = n^8  \aA p^g_K p^g_{w,K} 
(\aA p^g_* p^g_{w,0})^7 $.
We assign weights 
$(1-5\eps)p^g_{w,c} n / (W^g_{w,c})^{-1}$ 
to each copy of any $\wC$ (and to $\wK$ for $c=K$).
By Lemma \ref{reg}, the total weight 
of wheels in $J_2$ on any arc $\ova{e}$ satisfies 
$1-10\eps < \hat{W}^g(\ova{e}) < 1$.
Thus the total weight of wheels in $J_2'$
on any arc $\ova{e}$ satisfies 
$1-d^{-1/4} < \hat{W}^g(\ova{e}) < 1$,
as we deleted at most $2d^{-1/3}n^7$ (say) copies of $\wK$
on $\ova{e}$ using a deleted arc.
Note also that for any two arcs the total weight of wheels 
containing both is at most $n^{-.7}$ (as $p^g_K \ge n^{-1/4}$).

Thus $\mc{H}$ satisfies the hypotheses of a result of Kahn \cite{KaLP} 
on almost perfect matchings in weighted hypergraphs
that are approximately vertex regular and have small codegrees.
A special case of this result (slightly modified) implies that
for any collection $\mc{F}$ of at most $n^{100}$ (say) subsets 
of $V(\mc{H})=J$ each of size at least $\sqrt{n}$ (say) we can find 
a matching $M$ in $\mc{H}$ such that 
$|F \sm \bigcup M| < d^{-1/5} |F|$ for all $F \in \mc{F}$.
(This is immediate from \cite{KaLP} if $\mc{F}$ has constant size,
and a slight modification using better concentration inequalities
implies the stated version. Alternatively, one can reduce to the
problem to an unweighted version via a suitable random selection 
of edges and then apply a result of Alon and Yuster \cite{AY}.)
This is also implied by a recent result of Ehard,
Glock and Joos~\cite{EGJ}.

We choose such a matching $M$ for the family $\mc{F}$
where for each $v \in V \cup W$ we include sets 
$F_v = \{ \ova{e} \in J_2[V,W]: v \in \ova{e} \}$,
$F^K_v = \{ \ova{e} \in J^K_2[V,W]: v \in \ova{e} \}$,
and $F'_v = \{ \ova{e} \in J_2[V]: v \in \ova{e} \}$
(the last just for $v \in V$). This $\mc{F}$ is valid as
all $|F|>\sqrt{n}$ by Lemma \ref{deg}.
By construction
for all $c \in [3,K-1]$ every copy of $\wC$ in $M$
with hub $w$ has $p^2_{w,c} \ge n^{-.1}$ and
every copy of $\wK$ in $M$
with hub $w$ has $p^2_{w,K} \ge nd^{-1/3}$.

For each $w$ we define $G^2_w$ to be the subgraph of
$G$ corresponding to the wheels in $M$ containing $w$, 
where we take account of the twisting in colour $K$.
Thus $G^2_w$ contains the rim $c$-cycle
of any $c$-wheel in $M$ containing $w$,
and for any copy of $\wK$ in $M$ containing 
$\ova{xw} \in J^K[V,W]$ we obtain 
an oriented path of length $8$ from $x$ to $x^+$.
The maximum degree bounds in (i) and (ii) clearly hold.

Recalling that $N^-_{J_2}(w)$ is disjoint from
the set of interval successors $(Y^2_w)^+$,
we see that these cycles and paths are vertex-disjoint,
except that some paths may connect up to form longer paths, 
which can be described as follows. Let $\mc{Y}'_w$ be 
the set of maximal cyclic intervals $I$ such that 
for every $x \in I$ there is a copy of $\wK$ in $M$ 
containing $\ova{xw} \in J^K[V,W]$. Then for each 
$[a,b] \in \mc{Y}'_w$ we have a component of $G^2_w$ 
that is a path of length $8d(a,b)$ from $a$ to $b^+$.
All these paths have length at most $8d$, as each such $I$ 
is contained within an interval of $\mc{Y}^2_w$.
Furthermore, if $x \in V$ is an endpoint of some path 
in $G^2_w$ then either $x$ is a startpoint or successor of 
some interval in $\mc{Y}^2_w$, for which there are 
at most $2t_2$ choices of $w$ by Lemma \ref{lem:int},
or $x^+ w \in F^K_{x^+} \sm  \bigcup M$,
or $x^- w \in F^K_{x^-} \sm  \bigcup M$,
giving at most $2n/K$ more choices of $w$,
for a total of at most $n/\sqrt{d}$ (say).

It remains to show that each $G^2_w$ is isomorphic
to some valid $F'_w \sub F_w$. First we show
for any $c \in [3,K-1]$ that whp each $G^2_w$ 
has at most $q^2_{w,c} n$ cycles of length $c$.
The number of $c$-cycles
is in $G^2_w$ is at most $|N^-_{J^c_2}(w)|$, which 
by Chernoff bounds is whp $< p^2_{w,c} n + n^{.6}
= (1-\eta) q^2_{w,c} n + n^{.6} < q^2_{w,c} n$,
recalling that $p^2_{w,c} \ge n^{-.1}$.
Next we bound the total length $L_w$ of paths in $G^2_w$.
By Lemma \ref{lem:int} we have 
$L_w \le 8|Y^2_w| < 8p^2_{w,K} n + 8n^{3/4}$.
Writing $L'_w$ for the total length of long
(length $\ge K$) cycles and paths in $F^2_w$,
we recall that $8p^2_{w,K} n
= p^2_w n - \sum_{c=3}^{K-1} cp^2_{w,c}n
= (1-\eta)(L'_w+n^{.8})$.
So since $p^2_{w,K} \ge d^{-1/3}n$, we have
$L'_w > 8d^{-1/3}n$ and $L_w < (1-\eta/2)L'_w$.

We embed the paths of $G^2_w$ into the long cycles and
paths in $F^2_w$ according to a greedy algorithm,
where in each step that we embed some path $P$ of $G^2_w$
we delete a path of length $|P|+4$ from $F^2_w$,
which we allocate to a copy of $P$ surrounded
on both sides by paths of length $2$ that we will not 
include in $F'_2$ (so that $F'_2$ will be valid).
We choose such a path (if it exists) within a remaining
cycle or path of $G^2_w$, using an endpoint if it is 
a path (so that we preserve the number of components).
Recalling that there are at most $n/\sqrt{d}$ endpoints
of paths in $G^2_w$, we thus allocate a total of at most 
$2n/\sqrt{d}$ edges to the surrounding paths of length $2$.
Suppose for a contradiction that the algorithm gets stuck,
trying to embed some path $P$ in some remainder $R$.
Then all components of $R$ have size $\le |P|+5 \le 8d+5$.
All components of $G^2_w$ have size $\ge K$,
so $|R| \le (8d+5)|L'_w|/K$. However, we also have 
$|R| \ge |L'_w|-|L_w|-2n/\sqrt{d}
\ge \eta |L'_w|/2 - 2n/\sqrt{d}$, which is a contradiction, 
as $K^{-1} \ll d^{-1} \ll \eta$ and $L'_w > 8d^{-1/3}n$.
Thus the algorithm succeeds in constructing a valid
copy $F'_w$ of $G^2_w$ in $F^2_w$.
\end{proof}

\section{Exact decomposition} \label{sec:exact}
  
This section contains the two exact decomposition results
that will conclude the proof in both Case $K$ and Case $\ell^*$.
We start by giving a common setting for both cases.
We say that $G'_1$ is a $\gamma$-perturbation of $G_1$ if 
$|N_{G_1}^\pm(x) \sd N_{G'_1}^\pm(x)| < \gamma n$ for any $x \in V$.
We say that $J'_1$ is a $\gamma$-perturbation of $J_1$ 
if $J'_1$ is obtained from $J_1$ by adding, deleting
or recolouring at most $\gamma n$ arcs at each vertex.
We will only consider perturbations which are
compatible in the sense that arcs added between $V$ and $W$
will point towards $W$,
and existing colours will be used.

\begin{set} \label{set}
Let $G'_1$ be an $\eta^{.9}$-perturbation of $G_1$.
Suppose for each $w \in W$ that $Z_w \sub V$ 
with $|Z_w \sd (V \sm N^-_{J^1}(w))| < 5\eta n$.
For $x \in V$ we write $Z(x)=\{w \in W: x \in Z_w\}$.
\end{set}

We start with the exact result for Case $\ell^*$, where we
recall that $F^1_w$ consists of exactly $L^{-3} n$ cycles of length 
$\ell^*$, so $p^1_w = (1-\eta)\ell^* L^{-3} + n^{-.2}$,
$p^1_{w,\ell^*} = (1-\eta) L^{-3}$, $p^1_{w,K} = n^{-.2}/8$
and $p^1_{w,c}=0$ for $c \in [3,K-1]$.

\begin{lemma} \label{exactL}
Suppose in Setting \ref{set} and Case $\ell^*$ that 
$d_{G'_1}^\pm(x)=|W|-|Z(x)|$ for all $x \in V$
and $\ell^*$ divides $n-|Z_w|$ for all $w \in W$.
Then $G'_1$ can be partitioned into graphs $(G^1_w: w \in W)$,
where each $G^1_w$ is an oriented $C_{\ell^*}$-factor 
with $V(G^1_w) = V \sm Z_w$.
\end{lemma}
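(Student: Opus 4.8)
The plan is to deduce the statement from Theorem~\ref{decompL} (with $c=\ell^*$) via the auxiliary-digraph encoding of Section~\ref{sec:over}. I would take a digraph $J$ on $V\cup W$ with $J[W]=\es$, with $J[V]$ equal to $G'_1$ with all arcs coloured~$0$, and with $J[V,W]$ consisting of all arcs $\ova{vw}$ with $v\notin Z_w$ directed towards $W$, of which exactly $(n-|Z_w|)/\ell^*$ into each $w$ (an integer, by hypothesis) get colour $\ell^*$ and the rest colour~$0$; the colour-$\ell^*$ arcs should be chosen so that $J$ is a compatible $O(\eta^{.9})$-perturbation of $J_1$ (legitimate since in Case~$\ell^*$ all $F^1_w$ coincide, so $|N^-_{J^{\ell^*}_1}(w)|$ agrees with the target $(n-|Z_w|)/\ell^*$ up to $O(\eta n)$, while $V\sm Z_w$ differs from $N^-_{J_1}(w)$ in $<5\eta n$ vertices and $G'_1$ is an $\eta^{.9}$-perturbation of $G_1$ by Setting~\ref{set}, and the colour-$K$ part of $J_1$, whose untwist contributes the remaining arcs of $G_1$ to $J[V]$, involves only $O(n^{.8})$ arcs per vertex as $p^1_{w,K}=n^{-.2}/8$). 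A $\wL$-decomposition of $J$ with every hub in $W$ then gives the statement: since $J[V,W]$ points towards $W$ and $J[W]=\es$, each wheel has its rim (a directed $\ell^*$-cycle) inside $V$ and its $\ell^*$ spokes running to its hub, so, setting $G^1_w$ to be the union of the rims of the wheels with hub $w$, every $v\notin Z_w$ lies in exactly one such wheel (via its unique spoke to $w$); hence $G^1_w$ is an oriented $C_{\ell^*}$-factor with $V(G^1_w)=V\sm Z_w$, and since each arc of $J[V]=G'_1$ is a rim edge of exactly one wheel, the $G^1_w$ partition $G'_1$. Divisibility holds by construction and by the hypotheses: all arcs of $J[V]$ have colour~$0$, all arcs of $J[V,W]$ point towards $W$, for $v\in V$ we have $d^-_J(v,V)=d^-_{G'_1}(v)=|W|-|Z(v)|=d^+_{G'_1}(v)=d^+_J(v,V)$ and $d^+_J(v,W)=|\{w:v\notin Z_w\}|=|W|-|Z(v)|$, and for $w\in W$ we have $d^-_J(w)=n-|Z_w|=\ell^*\,d^-_{J^{\ell^*}}(w)$.

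For regularity, give each copy of $\wL$ in $J$ the common weight $w_0:=p^1_{w,\ell^*}n/W^1_{w,\ell^*}$, which is independent of $w$ in Case~$\ell^*$ and lies in $[\oO n^{1-\ell^*},\oO^{-1}n^{1-\ell^*}]$ once $\oO$ is small in terms of $L,\aA$ (using $p^1_{w,0},p^1_{w,*}>L^{-3}$). By Lemma~\ref{reg} (with $g=1$), with this weighting the total weight of wheels in $J_1$ through any admissible arc is $1\pm5\eps$; since $p^1_{w,c}=0$ for $c\in[3,K-1]\sm\{\ell^*\}$ and $p^1_{w,K}=n^{-.2}/8$, only $\wL$-copies contribute non-negligibly, so the $\wL$-weight through any such arc of $J_1$ is $1\pm6\eps$. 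To transfer this to $J$, I would first arrange that every arc $\ova{xy}$ of $G'_1$ has $d(x,y)\ge3d$ — the $O(d)=n^{o(1)}$ short-distance arcs per vertex having been covered greedily in the construction of $G'_1$, as foreshadowed in Section~\ref{sec:reg} — so that Lemma~\ref{degWc}.ii applies to each arc of $J[V]$; then, since $J$ arises from $J_1$ by altering at most $O(\eta^{.9})n$ arcs per vertex while every neighbourhood count entering the number of $\wL$-copies through a fixed arc has size $\OO(n)$, re-running the estimate of Lemma~\ref{degWc} with $G'_1$ in place of $G_1$ multiplies that number by a factor $1\pm O(L^4\aA^{-1}\eta^{.9})$. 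Hence the total $\wL$-weight through any arc of $J$ is $1\pm\dD$, for suitable Theorem~\ref{decompL}-parameters $\oO,\dD$ consistent with~\eqref{hierarchy}.

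For extendability, let $A,B\sub V$ and $C\sub W$ be disjoint of sizes $\le h$. The bound $|N^+_{J^0}(A)\cap N^+_{J^{\ell^*}}(B)\cap W|\ge\oO n$ is the instance $S=\es$, $S_0=A$, $S_{\ell^*}=B$ of Lemma~\ref{deg}.iv (with $(S,S')$ vacuously $3d$-separated), which gives at least $2^{-2s}\sum_w(p^1_{w,0})^{|A|}(p^1_{w,\ell^*})^{|B|}=2^{-2s}\aA n\,(p^1_0)^{|A|}(p^1_{\ell^*})^{|B|}\ge\oO n$. For $c'\in\{0,\ell^*\}$, the bound $|N^+_{J^0}(A)\cap N^-_{J^0}(B)\cap N^-_{J^{c'}}(C)|\ge\oO n$ follows by combining the typicality estimate $|N^+_{G'_1}(A)\cap N^-_{G'_1}(B)|=\OO(n)$ (inherited from $G_1$, a dense random subgraph of the typical graph $G$, with $h\le t$) with the quasirandomness of the colour-$c'$ arcs into $W$ established for $J_1$ in Lemma~\ref{deg}.iii — running the argument of that lemma for $J$. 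Both estimates are stable under the $O(\eta^{.9})$-perturbation, so all hypotheses of Theorem~\ref{decompL} hold and it yields the required $\wL$-decomposition of $J$.

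The step I expect to be the main obstacle is the regularity check: transferring the wheel-regularity of Lemma~\ref{reg}, which is established for $J_1$, to the perturbed digraph $J$, and cleanly disposing of the short-distance arcs in $J[V]$ on which Lemma~\ref{degWc}.ii gives no weight estimate.
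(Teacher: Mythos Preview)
Your approach is essentially the paper's: build a perturbation $J'_1$ of $J_1$ with $J'_1[V]=G'_1$ and each $N^-_{J'_1}(w)=V\sm Z_w$, then verify the hypotheses of Theorem~\ref{decompL}. One simplification the paper makes for regularity: rather than re-running Lemma~\ref{degWc} on the perturbed digraph, it just observes that at most $O(\eta^{.7}n^{\ell^*-1})$ copies of $\wL$ through any fixed arc use some other arc of $J'_1\sd J_1$, so the total weight shifts by $O(L^L\eta^{.7})$.

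On the concern you flag: you are right that Lemma~\ref{degWc}.ii carries the hypothesis $d(x,y)\ge 3d$, and the paper's proof of Lemma~\ref{exactL} in fact does not address the short-distance rim arcs explicitly (contrast the greedy step in the proof of Lemma~\ref{exactK}). Your instinct to handle them is correct, but not by arranging that $G'_1$ itself has no such arcs before the lemma is invoked --- removing them would violate $d^\pm_{G'_1}(x)=|W|-|Z(x)|$. The clean fix, mirroring Lemma~\ref{exactK}, is: after constructing $J'_1$, greedily find arc-disjoint copies of $\wL$ in $J'_1$ covering every $\ova{xy}\in J'_1[V]$ with $d(x,y)<3d$ (a constant lower bound on $\hat W^1_{\ell^*}(\ova{xy})$, analogous to the last line of Lemma~\ref{degWK}.ii, follows from the $(S,S')$-separated case of Lemma~\ref{lem:int}.v with $S=\es$), and delete these wheels. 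This preserves divisibility, perturbs $J'_1$ by only $O(d^2)$ further arcs per vertex, and leaves every remaining rim arc $3d$-separated, so the regularity argument then applies verbatim.
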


\begin{proof}
We will show that there is a perturbation $J'_1$ of $J_1$
such that $J'_1[V] = G'_1$,
each $N^-_{J'_1}(w) = V \sm Z_w$, and Theorem \ref{decompL}
applies to give a $\wL$-decomposition of $J'_1$.
This will suffice, by taking each $G^1_w$ to consist of
the rim $\ell^*$-cycles of the copies of $\wL$ containing $w$.

We construct $J'_1$ by starting with $J'_1=J_1$ and applying
a series of modifications as follows. First we delete all arcs
of $J'_1[V]$ corresponding to arcs of $G_1 \sm G'_1$ and add 
arcs of colour $0$
corresponding to arcs of $G'_1 \sm G_1$.
Similarly, we delete all arcs $\ova{vw} \in J'_1[V,W]$ 
with $v \in N^-_{J_1}(w) \cap Z_w$ and add arcs $\ova{vw}$
of colour $0$ for each $v \in (V \sm Z_w) \sm N^-_{J_1}(w)$.
We also recolour any $\ova{vw} \in J'_1[V,W]$ of colour $K$
to have colour $0$ and replace any $\ova{xy}$ of colour $K$
in $J'_1[V]$ by $\ova{xy}^+$ of colour $0$. 
As each $p^1_{w,K}=n^{-.2}/8$ in this case,
whp this affects at most $n^{.8}$ arcs at any vertex. 
Now $J'_1[V]=G'_1$,
each $N^-_{J'_1}(w) = V \sm Z_w$ and
$J'_1$ is a $\eta^{.8}$-perturbation of $J_1$.
We note for each $x \in V$ that $d_{J'_1}^\pm(x,V) 
= d_{G'_1}^\pm(x) = |W|-|Z(x)| = d^+_{J'_1}(x,W)$,
so the divisibility conditions for $x \in V$ are satisfied.

Finally, to satisfy the divisibility conditions
for all $w \in W$ we recolour so that  
$d^-_{(J'_1)^{\ell^*}}(w) = d^-_{J'_1}(w)/\ell^*$,
which is an integer, as $\ell^*$ divides
$d^-_{J'_1}(w) = n-|Z_w|$. 
By Lemma \ref{deg} each $d^-_{J_1}(w) = p^1_w n \pm 2n^{3/4}$
and $d^-_{J_1^{\ell^*}}(w) = p^1_{w,\ell^*} n \pm 2n^{3/4}$,
where $p^1_w = \ell^* p^1_{w,\ell^*} + n^{-.2}$ in this case.
As $J'_1$ is an $\eta^{.8}$-perturbation of $J_1$,
we only need to recolour at most $2\eta^{.8} n$ arcs
at any vertex, so our final digraph $J'_1$ 
is a $3\eta^{.8}$-perturbation of $J_1$.

Next we consider the regularity condition of Theorem \ref{decompK}.
To each copy of $\wL$ in $J'_1$ with hub $w$ 
we assign weight $p^1_{w,\ell^*} n/ W^g_{w,\ell^*} = 
p^1_{w,0} n (\aA p^1_{w,0} p^1_* n)^{-\ell^*}$,
which lies in $[n^{1-\ell^*}, L^L n^{1-\ell^*}]$.
We claim that for any arc $\ova{e}$ of $P'$ there is total weight 
$1 \pm \eta^{.6}$ on wheels containing $\ova{e}$.
To see this, we compare the weight to $\hat{W}^1_{\ell^*}(\ova{e})$
as defined in section \ref{sec:reg}, 
which is $1 \pm 4\eps$ by Lemma \ref{degWc}
(as $p^1_{w,0}=(\ell^*-1) p^1_{w,\ell^*}$ 
and $p^1_*=(\ell^*-1) p^1_{\ell^*}$).
The actual weight on $\ova{e}$ differs from this estimate only due 
to wheels containing $\ova{e}$ that have another arc in $J'_1 \sd J_1$. 
There are at most $40\eta^{.7} n^{\ell^*-1}$ such wheels, 
each affecting the weight by at most $L^L n^{\ell^*-1}$, so the claim holds.
Thus regularity holds with $\dD=\eta^{.6}$ and $\oO=L^{-L}$.

It remains to show that $J'_1$ satisfies the 
extendability condition of Theorem \ref{decompL}.
Consider any disjoint $A,B \sub V$ and $C \sub W$
each of size $\le h$, where $h = 2^{50 (\ell^*)^3}$.
By Lemma \ref{deg}.iii,
for $c \in \{0,\ell^*\}$ we have
$$
|N^+_{J^0_1}(A) \cap N^-_{J^0_1}(B) \cap N^-_{J^c_1}(C)|
= |N_G^+(A) \cap N_G^-(B)| (p_*^1)^{|A|} (p_*^1)^{|B|}
\prod_{w \in C} p^1_{w,c} \pm 4sn^{3/4}
> (L^{-5} \aA )^{2h} n,
$$
by typicality of $G$.
Also, by Lemma \ref{deg}.iv
(with $S=\es$ and $S'=A \cup B$) we have
$|N^+_{J^0_1}(A) \cap N^+_{J^{\ell^*}_1}(B) \cap W| 
\ge 2^{-2s} L^{-7h} |W|$, say.
The perturbation from $J_1$ to $J'_1$ affects these estimates 
by at most $6h\eta^{.7}n < \eta^{.6}n$, so $J'_1$ satisfies 
extendability with $\oO=L^{-L}$ as above.
Now Theorem \ref{decompL} applies to give 
a $\wL$-decomposition of $J'_1$,
which completes the proof.
\end{proof}

Our second exact decomposition result concerns the path factors 
with prescribed ends required for Case $K$.
We recall that each $F^1_w$ consists of cycles of length $\ge K$
and at most one path of of length $\ge K$
with $|F^1_w| - n/2 \in [0,2K]$,
and that $(Y^1_w)^-$ and $(Y^1_w)^+$ are the sets of
startpoints and successors of intervals in $\mc{Y}^1_w$.
We also recall from Lemma \ref{lem:int} that for each $x \in V$,
letting $t^\pm_1(x) = |\{w: x \in (Y^1_w)^\pm\}|$,
we have $t^+_1(x)=t^-_1(x)=t_1$.
After embedding $F^2_w$, and a greedy embedding connecting
the paths to $(Y^1_w)^-$ and $(Y^1_w)^+$, we will need 
path factors $G^1_w$ as follows.

\begin{lemma} \label{exactK}
Suppose in Setting \ref{set} and Case $K$ that 
$Z_w$ is disjoint from $Y^1_w \cup (Y^1_w)^+$
and $8|Y^1_w| = n-|Z_w|-|(Y^1_w)^+|$ for all $w \in W$,
and $d_{G'_1}^\pm(x)=|W|-t_1-|Z(x)|$ for all $x \in V$.
Then $G'_1$ can be partitioned into graphs $(G^1_w: w \in W)$,
such that each $G^1_w$ is a vertex-disjoint union of oriented paths
with $V(G^1_w) = V\setminus Z_w$,
where for each $[a,b] \in \mc{Y}^1_w$ there
is an $ab^+$-path of length $8d(a,b)$.
\end{lemma}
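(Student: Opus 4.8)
The plan is to follow the template of Lemma~\ref{exactL}: we construct a compatible perturbation $J'_1$ of $J_1$ with $J'_1[V]$ the twisting encoding of $G'_1$ from Section~\ref{sec:over} (colour-$0$ arcs of $G'_1$ kept, a colour-$K$ arc $\ova{xy}$ of $G'_1$ replaced by $\ova{x y^-}$), with $N^-_{(J'_1)^K}(w)=Y^1_w$ and $N^-_{J'_1}(w)=(V\sm Z_w)\sm(Y^1_w)^+$ for each $w$; we verify the divisibility, regularity and extendability hypotheses of Theorem~\ref{decompK} with $c=8$; and we let $G^1_w$ consist of the paths obtained by untwisting the copies of $\wK$ with hub $w$ in the resulting $\wK$-decomposition of $J'_1$. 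Since all colour-$K$ spokes at $w$ come from $Y^1_w$, these wheels use each spoke at $w$ once and correspond bijectively, via their colour-$K$ spokes, to the vertices of $Y^1_w$; as the intervals of $\mc{Y}^1_w$ are non-consecutive (so are the maximal cyclic intervals of $Y^1_w$), untwisting as in Section~\ref{sec:over} produces exactly an $ab^+$-path of length $8d(a,b)$ for each $[a,b]\in\mc{Y}^1_w$, and their vertex sets partition $N^-_{J'_1}(w)\cup(Y^1_w)^+=V\sm Z_w$ (using $Z_w\cap(Y^1_w)^+=\es$), as required.

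To build $J'_1$ we start from $J_1$ and first dispose of the (at most $O(dn)$) arcs of $G'_1$ joining vertices at cyclic distance $<3d$: by a routine greedy argument, using the codegree bound and the extendability estimates of Lemma~\ref{deg}, we assign each such arc to an arc-disjoint copy of $\wK$ whose rim vertices avoid $Z_w$, where $w$ is its hub, and set these wheels aside; since a complete copy of $\wK$ satisfies all divisibility conditions of Theorem~\ref{decompK}, deleting them preserves those conditions, and henceforth we may assume $G'_1$ has no arc at cyclic distance $<3d$. Next we delete the arcs of $J_1[V]$ coming from $G_1\sm G'_1$, add colour-$0$ arcs for those of $G'_1\sm G_1$, and recolour $O(\eps n)$ arcs at each vertex so that every $v\in V$ has exactly $k_0:=\sum_{i=1}^{2s+1}d_i t^1_i$ incoming colour-$K$ arcs in the colouring of $G'_1$ before twisting; this is possible as $k_0\le d^-_{G'_1}(v)$, and by Lemmas~\ref{lem:int}.i and~\ref{deg}.i the random colouring already gives $k_0\pm O(\eps n)$ such arcs per vertex. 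Finally we keep the colour-$K$ spokes of $J_1$ (for which $N^-_{(J_1)^K}(w)=Y^1_w$) and adjust the colour-$0$ spokes so that $N^-_{(J'_1)^0}(w)=((V\sm Z_w)\sm(Y^1_w)^+)\sm Y^1_w$. By Setting~\ref{set} and the hypotheses of the lemma, exactly as in Lemma~\ref{exactL}, the resulting $J'_1$ is an $O(\eta^{.8})$-perturbation of $J_1$ with the stated structure.

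Now for divisibility: $d^-_{J'_1}(w)=|N^-_{J'_1}(w)|=n-|Z_w|-|(Y^1_w)^+|=8|Y^1_w|=8\,d^-_{(J'_1)^K}(w)$. For $v\in V$, twisting preserves tails, so $d^+_{J'_1}(v,V)=d^+_{G'_1}(v)=|W|-t_1-|Z(v)|$; and since every vertex has exactly $k_0$ incoming colour-$K$ arcs, twisting changes in-degrees by $k_0-k_0=0$, so $d^-_{J'_1}(v,V)=d^-_{G'_1}(v)=|W|-t_1-|Z(v)|$; moreover $d^+_{J'_1}(v,W)=|W|-|Z(v)|-t^+_1(v)=|W|-|Z(v)|-t_1$, using $Z_w\cap(Y^1_w)^+=\es$ and $t^+_1(v)=t_1$ from Lemma~\ref{lem:int}.ii. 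Finally $d^-_{(J'_1)^K}(v,V)=k_0$ by construction, while $d^+_{(J'_1)^K}(v,W)=|\{w:v\in Y^1_w\}|=\sum_i d_i t^1_i=k_0$, since $v$ lies in exactly one interval of each $\mc{I}^i_j$ and every interval of $\mc{I}^i$ is used by exactly $t^1_i$ factors. This last equality $d^-_{(J'_1)^K}(v,V)=d^+_{(J'_1)^K}(v,W)$ is precisely the condition $\DD(v)=d_-(v)-d_+(v)=0$ of Section~\ref{sec:alg}, and is the reason the intervals $\mc{Y}^g_w$ were set up so carefully; reconciling it with the twisting, while keeping $J'_1$ a small perturbation of $J_1$ and disposing of the arcs at cyclic distance $<3d$ (which cannot lie in any $3d$-separated wheel), is, I expect, the main obstacle in the proof. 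Deleting the set-aside copies of $\wK$ preserves all these conditions.

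Regularity and extendability then follow as in Lemma~\ref{exactL}. Give each $3d$-separated copy of $\wK$ with hub $w$ the weight $p^1_{w,K}n/W^1_{w,K}$, which in Case~$K$ is $\TT(n^{-7})$ as $p^1_K,p^1_{w,K},p^1_*,p^1_{w,0}>1/17$; by Lemma~\ref{reg} the total weight over all copies of $\wK$ in $J_1$ on any admissible arc $\ova{e}$ is $1\pm5\eps$ (using $p^1_{w,0}=7p^1_{w,K}$ and $p^1_*=7p^1_K$ in Case~$K$), restricting to $3d$-separated wheels loses weight at most $O(dn^6)\cdot\oO^{-1}n^{-7}=o(1)$, and the perturbation and the deletion of set-aside wheels change the weight on $\ova{e}$ by at most $O(\eta^{.8}n^7)\cdot\oO^{-1}n^{-7}=o(1)$, so regularity holds with $\dD=\eta^{.6}$ and a suitable constant $\oO$. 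Extendability follows from Lemmas~\ref{deg}.iii and~\ref{deg}.iv just as the extendability of Theorem~\ref{decompK} was reduced to it in Section~\ref{sec:wheel} (here $J^*_1=J^0_1$ since $p^1_c=0$ for $c\in[3,K-1]$), using typicality of $G$ for the factor $|N^+_G(A)\cap N^-_G(B)|\ge((1\pm\eps)\aA)^{2h}n$ and the bounds $p^1_{w,K},p^1_{w,0},p^1_K,p^1_*>1/17$; the perturbation changes the estimates by $O(h\eta^{.8}n)=o(n)$. Hence Theorem~\ref{decompK} applies, and untwisting its $\wK$-decomposition and adding back the set-aside wheels yields the required partition $(G^1_w:w\in W)$.
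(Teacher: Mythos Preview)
Your proof is essentially correct and follows the same template as the paper (construct a perturbation of $J_1$, verify divisibility, regularity and extendability, apply Theorem~\ref{decompK}), but your divisibility argument is a genuine simplification. The paper first balances the global counts $|P^K[V]|=|P^K[V,W]|$, then runs a greedy algorithm with two types of recolouring operations to drive the imbalance $\DD' = \sum_v \big(|d^+_{P^K}(v,V)-d^+_{P^K}(v,W)| + |d^-_{P^K}(v,V)-d^+_{P^K}(v,W)|\big)$ to zero. You instead observe that only the condition $d^-_{(J'_1)^K}(v,V)=d^+_{(J'_1)^K}(v,W)$ is actually required by Theorem~\ref{decompK}, that $d^+_{(J'_1)^K}(v,W)=\sum_i d_i t^1_i=:k_0$ is already constant over $v$ by the interval construction, and that it therefore suffices to recolour so that every vertex has exactly $k_0$ incoming colour-$K$ arcs in $G'_1$ before twisting --- a single greedy pass with no out-degree control needed. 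The paper's control of $d^+_{P^K}(v,V)$ is in fact superfluous.

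Two minor points deserve care. First, your ``$O(\eps n)$ recolourings per vertex'' should be $O(\eta^{.9} n)$: the perturbation $G_1\to G'_1$ already moves that many arcs at each vertex, so the initial discrepancy $|d^-_{G'_1,K}(v)-k_0|$ is of that order, not $O(\eps n)$. Second, your order of operations (set aside wheels on close arcs first, then build $J'_1$) is workable but awkward: the spoke colours of those wheels are dictated by $Y^1_w$, so you need exactly one rim vertex in $Y^1_w$ and the other seven in $V\sm Z_w\sm Y^1_w\sm(Y^1_w)^+$, not merely outside $Z_w$ as you write; and to count such wheels you effectively need the lower bound of Lemma~\ref{degWK} rather than the extendability estimates of Lemma~\ref{deg}. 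The paper does this step last --- after the perturbation $P$ with full divisibility is in place, it covers the remaining close colour-$0$ arcs by arc-disjoint copies of $\wK$ inside $P$ via the bound $\hat W^1_K(\ova{xy})>2^{-2s-1}p^1_K/p^1_*$ --- which makes the wheel-finding a direct application of a pre-computed estimate and keeps the divisibility bookkeeping separate.
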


\begin{proof}
We will show that there is 
a perturbation $P$ of $J_1$ such that
each $N^-_P(w) = V \sm Z_w$ and $P[V]$
corresponds to $G'_1$ under twisting, and
a set $E$ of arc-disjoint copies of $\wK$ in $P$,
such that Theorem \ref{decompK} applies to give 
a $\wK$-decomposition of $P' := P \sm \bigcup E$.
This will suffice, by taking each $G^1_w$ to consist of the
union of the oriented $8$-paths that correspond under twisting to
the rim $8$-cycles of the copies of $\wK$ containing $w$.

We construct $P$ by starting with $P=J_1$ and applying
a series of modifications as follows. First we delete all arcs
of $P[V]$ corresponding to arcs of $G_1 \sm G'_1$ and add 
arcs of colour $0$
corresponding to arcs of $G'_1 \sm G_1$.
Similarly, we delete all arcs $\ova{vw} \in P[V,W]$ 
with $v \in N^-_{J_1}(w) \cap Z_w$ and add arcs $\ova{vw}$
of colour $0$ for each 
$v \in V \sm (Z_w \cup (Y^1_w)^+ \cup  N^-_{J_1}(w))$.
We also replace any $\ova{xy}$ of colour $K$ with $d(x,y)<3d$
by an arc $\ova{xy}^+$ of colour $0$; 
this affects at most $6d$ arcs at each vertex.
Now $P[V]$ corresponds to $G'_1$ under twisting,
each $N^-_P(w) = V \sm (Z_w \cup (Y^1_w)^+)$ and
$P$ is a $2\eta^{.9}$-perturbation of $J_1$.

We note that $P$ now satisfies the divisibility condition
$d^-_P(w) = 8|Y^1_w| = 8d^-_{P^K}(w)$, and for each $v \in V$
that $d^+_P(v,W) = |W|-t_1-|Z(x)| = d_P(v,V)/2$, 
so $|P[V,W]| = |P[V]|$. We continue to modify $P$ to obtain
$|P^0[V,W]|=|P^0[V]|$ and $|P^K[V,W]|=|P^K[V]|$.
To do so, we will recolour arcs of $P[V]$ according to a 
greedy algorithm, where if $|P^0[V]|>|P^0[V,W]|$ we replace 
some $\ova{xy} \in P^0[V]$ by $\ova{xy}^- \in P^K[V]$,
or if $|P^0[V]|<|P^0[V,W]|$ we replace 
some $\ova{xy} \in P^K[V]$ by $\ova{xy}^+ \in P^0[V]$.
This preserves $P[V]$ corresponding to $G'_1$ under twisting
and $|P[V]| = |P[V,W]|$, so if we ensure $|P^0[V,W]|=|P^0[V]|$, 
we will also have $|P^K[V,W]|=|P^K[V]|$.
During the greedy algorithm, we choose the arc to recolour 
arbitrarily, subject to avoiding the set $S$ of vertices
at which we have recoloured more than $\eta^{.8} n/2$ arcs.
The total number of recoloured arcs is at most
$||P[V,W]|-|P[V]|| \le ||J_1[V,W]|-|J_1[V]|| + 2\eta^{.9}n^2
< 3\eta^{.9}n^2$ (by Lemma \ref{deg}), so $|S|<12\eta^{.1}n$.
Thus the algorithm can be completed,
giving $P$ that is an $\eta^{.8}$-perturbation of $J_1$
with $|P^0[V,W]|=|P^0[V]|$ and $|P^K[V,W]|=|P^K[V]|$.

We will continue modifying $P[V]$ until it satisfies the 
remaining degree divisibility conditions for each $v \in V$,
i.e.\ $d^+_P(v,V) = d^-_P(v,V) = d^+_P(v,W)$
and $d^-_{P^K}(v,V) =  d^+_{P^K}(v,W)$.
To do so, we will reduce to $0$ 
the imbalance $\DD' = \sum_{v \in V} \DD'(v)$ 
with each $\DD'(v) = |d^+_{P^K}(v,V)-d^+_{P^K}(v,W)|
+ |d^-_{P^K}(v,V)-d^+_{P^K}(v,W)|
$. We do not attempt 
to control any $d^\pm_{P^0}(v,V)$, but nevertheless
the divisibility conditions will be satisfied when $\DD'=0$.
To see this, note that if $\DD'=0$ then clearly all
$d^+_{P^K}(v,V)=d^-_{P^K}(v,V)=d^+_{P^K}(v,W)$,
so it remains to
show that $d_P^-(v,V)=d_P^+(v,V)=d_P^+(v,W)$.
Here we recall the discussion in section \ref{sec:alg}
relating the choice of intervals to degree divisibility,
where (setting $H=G'_1$ and $J=P$) we noted that
$d_{G'_1}^+(v) = d_P^+(v,V)$ and
$d_{G_1'}^-(v) = d_P^-(v,V) + \DD(v)$, with
$\DD(v) = d^-_{P^K}(v^-,V) - d^-_{P^K}(v,V)
= d^+_{P^K}(v^-,W) - d^+_{P^K}(v,W)$.
By our choice of intervals
all $d^+_{P^K}(v,W)$ are equal to $t_1$,
so $\DD(v)=0$ and $d_P^\pm(v,V) = d_{G'_1}^\pm(v)
= |W|-t_1-|Z(x)| = d^+_P(v,W)$, as required.

We have two types of reduction according to the two types
of term in the definition of $\DD'(v)$:
\begin{enumerate}
\item If $\sum_v |d^-_{P^K}(v,V)-d^+_{P^K}(v,W)| > 0$
then we can choose $x,y$ in $V$ with
$d^-_{P^K}(x,V) > d^+_{P^K}(x,W)$ and
$d^-_{P^K}(y,V) < d^+_{P^K}(y,W)$.
We will find $z \in V$ such that
$\ova{zx} \in P^K$, $\ova{zy}^+ \in P^0$
and replace these arcs by 
$\ova{zx}^+ \in P^0$, $\ova{zy} \in P^K$.
\item If $\sum_v |d^+_{P^K}(v,V)-d^+_{P^K}(v,W)| > 0$
then we can choose $x,y$ in $V$ with
$d^+_{P^K}(x,V) > d^+_{P^K}(x,W)$ and
$d^+_{P^K}(y,V) < d^+_{P^K}(y,W)$.
We will find $z \in V$ such that
$\ova{xz} \in P^K$, $\ova{yz}^+ \in P^0$
and replace these arcs by 
$\ova{yz} \in P^K$, $\ova{xz}^+ \in P^0$.
\end{enumerate}

\begin{center}
\includegraphics{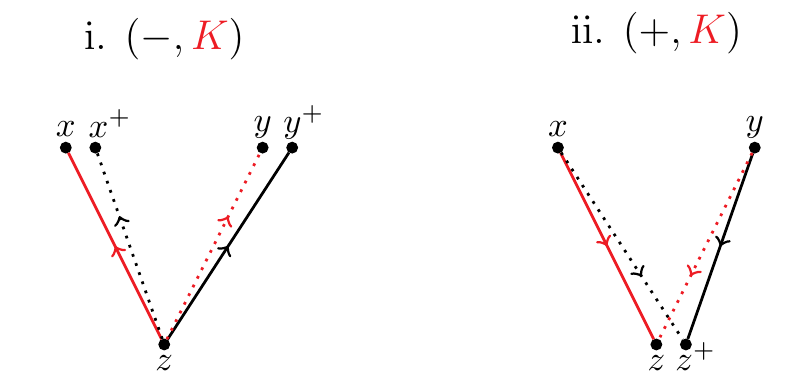}
\end{center}

Each of these operations preserves $P[V]$
corresponding to $G'_1$ under twisting and
reduces $\DD'$. 

To reduce $\DD'$ to $0$ we apply a greedy algorithm
where in each step we apply one of the above operations.
We do not allow $z$ with $d(x,z)<3d+2$ or $d(y,z)<3d+2$
(to avoid creating close arcs in colour $K$)
or $z$ in the set $S'$ of vertices
that have played the role of $z$ at $\eta^{.7}n/2$
previous steps. The total number of steps is at most 
$2\eta^{.8}n^2$, so $|S'| < 4\eta^{.1} n$.
To estimate the number of choices for $z$ at each step,
we apply Lemma \ref{deg}.iii to
$|N^-_{J^{K'}_1}(x^+) \cap N^-_{J^0_1}(y^+)|$
for operation (i),
$|N^+_{J^{K'}_1}(x) \cap N^+_{J^0_1}(y)|$ to find $z^+$ for (ii). 
By typicality of $G$ this gives 
at least $\aA^2 n/9$ choices,
of which at most $5\eta^{.1} n$ are forbidden 
by lying in $S$ or too close to $x$ or $y$, 
or due to requiring an arc of $J_1 \sm P$,
so some choice always exists.
Thus the algorithm can be completed, giving $P$ 
that is an $\eta^{.7}$-perturbation of $J_1$,
satisfies the divisibility conditions, and has 
$P[V]$ corresponding to $G'_1$ under twisting.

Next we construct $E$ as a set of arc-disjoint
copies of $\wK$ that cover all $\ova{xy} \in P[V]$
with $d(x,y)<3d$. Note that all such $\ova{xy}$ have colour $0$.
We apply a greedy algorithm, where in each step that we consider 
some $\ova{xy}$ we choose a copy of $\wK$ that is arc-disjoint
from all previous choices and does not use any vertex in the
set $S$ of vertices that have been used $.1d^2$ times.
Then $|S|.1d^2 < 27dn$, so this forbids
at most $270n^7/d$ choices of $\wK$.
By Lemma \ref{degWK} we have 
$\hat{W}^1_K(\ova{xy}) > 2^{-2s-1} p^1_K/p^1_* > 2^{-3s}$,
so the number of choices is at least
$2^{-3s} \min_{w \in W} W^2_{w,K}/p^2_{w,K}n
> 2^{-4s} n^7$, say. Thus there is always some choice 
that is not forbidden, so the algorithm can be completed.
We note that $\bigcup E$ has maximum degree 
at most $d^2$ by definition of $S$, so $P' := P \sm \bigcup E$ 
is a $2\eta^{.7}$-perturbation of $J_1$.
Furthermore, $P'$ satisfies the divisibility conditions,
as $P$ does and so does each $\wK$ in $E$.

Next we consider the regularity condition of Theorem \ref{decompK}.
To each $3d$-separated copy of $\wK$ in $P'$ with hub $w$ 
we assign weight $p^1_{w,K} n/ W^1_{w,K} = 
(\aA p^1_K (\aA p^1_* p^1_{w,0} n)^7 )^{-1}$,
which lies in $[n^{-7}, L n^{-7}]$.
We claim that for any arc $\ova{e}$ of $P'$ there is total weight 
$1 \pm \eta^{.6}$ on wheels containing $\ova{e}$.
To see this, we compare the weight to $\hat{W}^1_K(\ova{e})$
as defined in section \ref{sec:reg}, 
which is $1 \pm 4\eps$ by Lemma \ref{degWK}
(as $\ova{e}$ is $3d$-separated,
$p^1_{w,0}=7p^1_{w,K}$ and $p^1_*=7p^1_K$).
The actual weight on $\ova{e}$ differs from this estimate only due 
to wheels containing $\ova{e}$ that have another arc in $P' \sd J_1$. 
There are at most $40\eta^{.7} n^7$ such wheels, 
each affecting the weight by at most $Ln^{-7}$, so the claim holds.
Thus regularity holds with $\dD=\eta^{.6}$ and $\oO=L^{-1}$.

It remains to show that $P'$ satisfies the 
extendability condition of Theorem \ref{decompK}.
Consider any disjoint $A,B \sub V$ and $L \sub W$
each of size $\le h$ and $a, b, \ell \in \{0,K\}$.
By Lemma \ref{deg}.iii we have
$|N^+_{J_1^a}(A) \cap N^-_{J_1^b}(B) 
\cap N^-_{J_1^\ell}(L)| 
= |N_G^+(A) \cap N_G^-(B)| (p_1^a)^{|A|} (p_1^b)^{|B|}
\prod_{w \in L} p^1_{w,\ell} \pm 4sn^{3/4}
> (10^{-3} \aA )^{2h} n$, say.
Also, if $(A,B)$ is $3d$-separated then
by Lemma \ref{deg}.iv we have
$|N^+_{J_1^0}(A) \cap N^+_{J_1^K}(B) \cap W| 
\ge 2^{-2s+10h} |W|$, say.
The perturbation from $J_1$ to $P'$ affects these estimates 
by at most $6h\eta^{.7}n < \eta^{.6}n$, so $P'$ satisfies 
extendability with $\oO=L^{-1}$ as above.
Now Theorem \ref{decompK} applies to give 
a $\wK$-decomposition of $P'$,
which completes the proof.
\end{proof}

\section{The proof} \label{sec:pf}

This section contains the proof of our main theorem.
We give the reduction to cases in the first subsection
and then the proof for both cases in the second subsection.

\subsection{Reduction to cases} \label{sec:red}

In this subsection we formalise the reduction to cases
discussed in section \ref{sec:over}. For Theorem \ref{main},
we are given an $(\eps,t)$-typical $\aA n$-regular digraph $G$ 
on $n$ vertices, where $n^{-1} \ll \eps \ll t^{-1} \ll \aA$,
and we need to decompose $G$ into some given
family $\mc{F}$ of $\aA n$ oriented one-factors on $n$ vertices.
We prove Theorem \ref{main} assuming that it holds in the 
following cases with $t^{-1} \ll K^{-1} \ll \aA$:

Case $K$: each $F \in \mc{F}$ has at least $n/2$ vertices 
in cycles of length at least $K$,  

Case $\ell$ for all $\ell \in [3,K-1]$: 
each $F \in \mc{F}$ has $\ge K^{-3} n$ 
cycles of length $\ell$.

We will divide into subproblems via
the following partitioning lemma.

\begin{lemma} \label{typ:split}
Let $n^{-1} \ll \eps \ll t^{-1} \ll \aA_0$.
Suppose $G$ is an $(\eps,t)$-typical $\aA n$-regular digraph 
on $n$ vertices and $\aA = \sum_{i \in I} \aA_i$
with each $\aA_i > \aA_0$. Then $G$ can be decomposed
into digraphs $(G_i: i \in I)$ on $V(G)$ such that 
each $G_i$ is $(2\eps,t)$-typical and $\aA_i n$-regular.
\end{lemma}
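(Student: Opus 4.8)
The plan is to define the decomposition by a random assignment of arcs and then show that with positive probability (indeed whp) all the typicality and regularity conditions hold. Concretely, for each arc $\ova{e}$ of $G$ independently, assign $\ova{e}$ to $G_i$ with probability $\aA_i/\aA$; since $\sum_i \aA_i = \aA$, this is a valid probability distribution. It remains to verify that whp every $G_i$ is $(2\eps,t)$-typical and $\aA_i n$-regular, and then apply a small correction to achieve exact regularity.

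First I would handle degrees. For each $v \in V$ and each $i \in I$, $d^+_{G_i}(v)$ is a sum of $d^+_G(v) = \aA n(1\pm\eps)$ independent indicator variables with mean $(\aA_i/\aA)d^+_G(v)$, so by the Chernoff bound whp $d^+_{G_i}(v) = (\aA_i/\aA)d^+_G(v) \pm n^{2/3}$, and similarly for in-degrees. Taking a union bound over the $O(n)$ vertices and $O(1)$ indices $i$, whp all these estimates hold simultaneously. Next, for typicality, fix $i$ and a set $S = S^- \cup S^+$ with $|S| \le t$, and let $M = M(S)$ be the set of vertices that are common in-neighbours of $S^-$ and out-neighbours of $S^+$ in $G$; by typicality of $G$, $|M| = ((1\pm\eps)\aA)^{|S|}n$. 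Each $u \in M$ lies in the corresponding common neighbourhood set in $G_i$ if and only if all $|S|$ relevant arcs between $u$ and $S$ were assigned to $G_i$, which happens independently with probability $(\aA_i/\aA)^{|S|}$. Hence the count is a sum of $|M|$ independent indicators with mean $(\aA_i/\aA)^{|S|}|M| = ((1\pm\eps)\aA_i)^{|S|}n$, and since $|M| \ge (\aA/2)^t n$ is polynomially large while $\aA_i/\aA \ge \aA_0/\aA$ is bounded below, the Chernoff bound gives concentration within a $(1\pm\eps)$ multiplicative factor whp; this yields $((1\pm\eps)^2\aA_i)^{|S|}n \subseteq ((1\pm 2\eps)\aA_i)^{|S|}n$ for $\eps$ small. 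Union-bounding over the at most $2^t \binom{n}{t} = n^{O(1)}$ choices of $S$ and the $O(1)$ indices $i$, whp all $G_i$ are $(2\eps,t)$-typical.

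To upgrade near-regularity to exact $\aA_i n$-regularity, I would note first that $\sum_i (\aA_i/\aA) d^\pm_G(v) = d^\pm_G(v)$, so the target degrees are consistent; replacing $G$ at the outset by a spanning $\aA n$-regular subdigraph (or noting the hypothesis already gives exact regularity of $G$, i.e.\ $d^\pm_G(v) = \aA n$ for all $v$) we have that after the random split each $d^\pm_{G_i}(v) = \aA_i n \pm n^{2/3}$. The discrepancies $\delta_i^\pm(v) := d^\pm_{G_i}(v) - \aA_i n$ satisfy $\sum_i \delta_i^+(v) = \sum_i \delta_i^-(v) = 0$ for every $v$, and $\sum_v \delta_i^+(v) = \sum_v \delta_i^-(v)$ for each $i$ (both equal $e(G_i) - \aA_i n \cdot n$). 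One can then repair the degrees by moving a small number of arcs between the $G_i$'s along short augmenting structures: for a pair $i, i'$ and vertices $u, v$ with $u$ having excess out-degree in $G_i$ and deficient out-degree in $G_{i'}$ (and correspondingly for in-degrees), find an arc $\ova{uw} \in G_i$ and an arc $\ova{w'v} \in G_{i'}$ with $\ova{uv}, \ova{w'w}$ present in the appropriate graphs, and swap them between $G_i$ and $G_{i'}$; the extendability/typicality of each $G_i$ (already established, with room to spare since $2\eps \ll t^{-1}$) guarantees $\Omega(n)$ choices at each step, so since only $O(n^{5/3})$ swaps are needed and each vertex is touched $o(n)$ times, the process completes while perturbing each $G_i$ by $o(n)$ arcs per vertex — hence typicality degrades to, say, $(2\eps, t)$-typical as claimed. (This is precisely the kind of argument referred to after Theorem~\ref{main} in the introduction; a cleaner alternative is to apply an exact-decomposition black box, but the direct swapping argument suffices.)

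The main obstacle is the final exact-regularity correction: the random split only gives near-regularity, and one must show the swapping procedure both terminates with the right degrees and does not destroy typicality. The key point making this manageable is that the number of required swaps is $O(n^{5/3}) = o(n^2)$ and spread out, so each vertex participates in only $o(n)$ swaps, keeping the perturbation to each $G_i$ well within the slack between $\eps$ and $2\eps$; and at every step typicality of the current $G_i$ (with constant density bounded away from $0$) provides many valid swap configurations via a standard common-neighbourhood count. Everything else is a routine Chernoff-plus-union-bound argument over polynomially many events.
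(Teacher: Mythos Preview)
Your overall strategy---random assignment of arcs followed by a degree-correction step---is exactly what the paper does, and your concentration arguments for typicality are fine. The issue is in the correction step: the specific swap you describe does not do what you claim. Moving $\ova{uw}$ from $G_i$ to $G_{i'}$ and $\ova{w'v}$ from $G_{i'}$ to $G_i$ fixes the out-degree of $u$ and the in-degree of $v$, but it creates fresh imbalances at $w$ and $w'$ (their in- and out-degrees in both $G_i$ and $G_{i'}$ change), and your sentence about ``$\ova{uv}, \ova{w'w}$ present in the appropriate graphs'' does not explain how these are used or why the net effect is a reduction in total imbalance. As written, there is no argument that the procedure terminates with all degrees correct rather than pushing defects around.

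The paper handles this by splitting the correction into three sequential phases, each with a clean elementary move: first make each $|G_i|$ exactly $\aA_i n^2$ by moving single arcs between parts (avoiding over-used vertices); then make each undirected total degree $d_{\wt{G_i}}(x)=2\aA_i n$ by finding a common neighbour $z$ with $xz\in\wt{G_i}$, $yz\in\wt{G_j}$ and swapping these two edges (this reduces $\sum_{i,x}|d_{\wt{G_i}}(x)-2\aA_i n|$ by at least $2$ while leaving $z$'s total degrees unchanged); finally fix the in/out split by the analogous directed swap, which now preserves the already-correct total degrees. Each phase has a monotone potential, so termination is immediate, and the number of steps is $O(n^{1.7})$ with each vertex touched $O(n^{.9})$ times, well within the $\eps$-to-$2\eps$ slack. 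You should either adopt this phased approach or give a precise single-move swap together with a potential function that strictly decreases.
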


\begin{proof}
We start by considering a random partition of $G$
into graphs $(G'_i: i \in I)$ where for each arc $\ova{e}$
independently we have $\mb{P}(\ova{e} \in G'_i)=\aA_i/\aA$.
We claim that whp each $G'_i$ is $(1.1\eps,t)$-typical.
Indeed, this holds by Chernoff bounds, 
as $\mb{E}d(G'_i) = \aA_i d(G)/\aA$ for each $i$, 
so whp $d(G'_i) = \aA_i \pm n^{-.4}$ (say),
and for any set $S=S_- \cup S_+$ 
of at most $t$ vertices, by typicality of $G$
we have $\mb{E} |N_{G'_i}^-(S_-) \cap N_{G'_i}^+(S_+)| 
= (\aA_i/\aA)^{|S|} |N_G^-(S_-) \cap N_G^+(S_+)|
= ((1  \pm \eps)d(G) \aA_i/\aA)^{|S|} n$,
so whp $|N_{G'_i}^-(S_-) \cap N_{G'_i}^+(S_+)| 
= ((1  \pm 1.1\eps) d(G'_i))^{|S|} n$,

Now we modify the partition to obtain $(G_i: i \in I)$,
by a greedy algorithm starting from all $G_i=G'_i$.
First we ensure that all $|G_i| = \aA_i n^2$.
At any step, if this does not hold then some
$|G_i| > \aA_i n^2$ and $|G_j| < \aA_j n^2$.
We move an arc from $G_i$ to $G_j$, arbitrarily
subject to not moving more than $n^{.7}$ arcs
at any vertex. We move at most $n^{1.6}$ arcs,
so at most $2n^{.9}$ vertices become forbidden 
during this algorithm. Hence the algorithm can 
be completed to ensure that all $|G_i| = \aA_i n^2$.
Each $|N_{G'_i}^-(S_-) \cap N_{G'_i}^+(S_+)|$ 
changes by at most $tn^{.7}$,
so each $G_i$ is now $(1.2\eps,t)$-typical.

Let $\wt{G_i}$ be the undirected graph of $G_i$
(which could have parallel edges).
We will continue to modify the partition until each $\wt{G_i}$ 
is $2\aA_i n$-regular, maintaining all $|G_i|=\aA_i n^2$.
At each step we reduce the imbalance
$\sum_{i,x} |d_{\wt{G_i}}(x)-2\aA_i n|$. 
If some $\wt{G_i}$ is not $2\aA_i n$-regular we have 
some $d_{\wt{G_i}}(x) > 2\aA_i n$ and $d_{\wt{G_i}}(y) < 2\aA_i n$.
Considering the total degree of $x$,
there is some $j$ with $d_{\wt{G_j}}(x) < 2\aA_j n$.
We will choose some $z$ with $xz \in \wt{G_i}$ and $yz \in \wt{G_j}$,
then move $xz$ to $\wt{G_j}$ and $yz$ to $\wt{G_i}$,
thus reducing the imbalance by at least $2$.
We will not choose $z$ in the set $L$ of vertices that
have played the role of $z$ at $n^{.8}$ previous steps.
We had all $d_{\wt{G_i}}(x) = 2(\aA_i n \pm n^{.7})$
after the first algorithm, so this algorithm will have
at most $2n^{1.7}$ steps, giving $|L| < n^{.9}$.
By typicality, there are at least $3\aA_i \aA_j n$ 
choices of $z$, of which at most $2n^{.9}$ are forbidden
by $L$ or requiring an edge that has been moved,
so the algorithm to make each $\wt{G_i}$ be $2\aA_i n$-regular
can be completed. Each $|N^-_{G_i}(S_-) \cap N^+_{G_i}(S_+)|$ 
changes by at most 
$tn^{.8}$, so each $G_i$ is now $(1.1\eps,t)$-typical.

We will continue to modify the partition until each $G_i$ 
is $\aA_i n$-regular, maintaining all $d_{\wt{G_i}}(x)=2\aA_i n$.
At each step we reduce the imbalance
$\sum_{i,x} |d_{G_i}^+(x)-\aA_i n|$
(if it is $0$ then since total degrees $d_{\wt{G_i}}(x)$
are correct, $G_i$ is regular).
If it is not $0$ we have 
some $d_{G_i}^+(x) > \aA_i n$ and $d_{G_i}^+(y) < \aA_i n$.
Again there is some $j$ with $d_{G_j}^+(x) < \aA_j n$ and we 
choose some $z$ with $\ova{xz} \in G_i$ and $\ova{yz} \in G_j$,
then move $\ova{xz}$ to $G_j$ and $\ova{yz}$ to $G_i$,
avoiding vertices $z$ which have played this role
at $n^{.9}$ previous steps.
By typicality we can find such $z$ at every step and
complete the algorithm. 
Each $|N_{G_i}^-(S_-) \cap N_{G_i}^+(S_+)|$ 
changes by at most 
$tn^{.9}$, so each $G_i$ is now $(2\eps,t)$-typical.
\end{proof}

Factors of a type that is too rare 
will be embedded greedily via the following lemma.

\begin{lemma} \label{typ:greedy}
Let $n^{-1} \ll \eps \ll t^{-1} \ll \aA$.
Suppose $G$ is an $(\eps,t)$-typical $\aA n$-regular digraph 
on $n$ vertices and $\mc{F}$ is a family of at most $\eps n$ oriented
one-factors. Then we can remove from $G$ a copy of each $F \in \mc{F}$
to leave a $(\sqrt{\eps},t)$-typical $(\aA n-|\mc{F}|)$-regular graph.
\end{lemma}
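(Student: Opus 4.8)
The statement asks: given an $(\eps,t)$-typical $\aA n$-regular digraph $G$ and a family $\mc{F}$ of at most $\eps n$ oriented one-factors, remove a copy of each $F \in \mc{F}$ so that what remains is $(\sqrt{\eps},t)$-typical and $(\aA n - |\mc{F}|)$-regular. The natural approach is to embed the factors one at a time, greedily, using that at each stage the host graph is still a typical regular digraph of density bounded below by $(\aA - \eps) n$, hence $\gg t^{-1} n$, so the blow-up lemma for bounded-degree spanning subgraphs (stated elsewhere in the literature, cf.\ the reference to \cite{KSS} in the paper) applies to find a copy of the next one-factor spanning $V(G)$. Since $|\mc{F}| \le \eps n$, after removing all of them the degree of every vertex drops by exactly $|\mc{F}|$ — it is crucial here that each $F$ is \emph{spanning} and $2$-regular as an undirected object (each vertex has in-degree and out-degree $1$), so removal preserves regularity exactly, with new common degree $\aA n - |\mc{F}|$.

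\textbf{Key steps.} First I would set up the induction: let $G_0 = G$ and, having defined $G_{i-1}$, we want to find a copy of the $i$th factor $F_i$ inside $G_{i-1}$ and set $G_i = G_{i-1} \setminus F_i$. For this to go through I need the invariant that $G_{i-1}$ is $(\eps_{i-1}, t)$-typical and $(\aA n - (i-1))$-regular, where the typicality parameter degrades slowly. Second, at each step I apply the blow-up lemma (or a direct greedy embedding, as suggested by the parenthetical ``blow-up lemma'' remark after Theorem~\ref{main}) to embed $F_i$: the point is that embedding a union of directed cycles spanning $n$ vertices is easy in a quasirandom digraph of density $\gg t^{-1}$, since one can process the cycles in order, extending paths vertex by vertex and closing cycles, using typicality to guarantee at every step that the set of valid extensions avoiding previously used vertices is nonempty (here $t \ge 2$ suffices for the extension/closing steps, with the used set of size $< n$ and the density bounded below). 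Third, I would track how typicality changes: removing a single spanning $2$-regular subgraph changes $|N_{G}^-(S_-) \cap N_G^+(S_+)|$ by at most $2|S| \le 2t$ (each removed vertex-incidence kills at most that many common neighbours of a fixed small set), so over $\le \eps n$ steps the common neighbourhood counts move by at most $2t \eps n = o(\eps n)$ while the density moves by $O(\eps)$; a short computation then shows $(\sqrt{\eps}, t)$-typicality of the final graph, exactly as in the degradation bookkeeping inside the proof of Lemma~\ref{typ:split}.

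\textbf{Main obstacle.} The substantive step is the one-factor embedding: one must embed a \emph{spanning} $2$-regular digraph, which is a genuine Hamilton-type / spanning-subgraph task, not a subgraph-counting argument. A clean way to handle it is to invoke the (directed) blow-up lemma for spanning bounded-degree subgraphs in quasirandom graphs — the cycle structure of a one-factor has maximum degree $2$ and bounded ``bandwidth-like'' parameters are not even needed since a disjoint union of cycles has a simple path-cover structure — or, more elementarily, to order the cycles of $F_i$ arbitrarily and greedily build them, at each partial step choosing the next vertex from $N^+_{G_{i-1}}(\text{current endpoint})$ minus the at most $n-1$ already-used vertices, with a final ``rotation-free'' closing argument using that the common neighbourhood of the two endpoints has size $\ge ((1-\eps)d)^2 n \gg n$. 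The only care needed is that the set of forbidden vertices can be nearly all of $V$ when $F_i$ is close to spanned, so I would instead embed the cycles so that at the last few steps of each cycle there is still slack — a standard device is to reserve a small random ``buffer'' set of vertices and connect up the final segments through it. I expect this to be routine given the typicality parameters in the hierarchy $\eps \ll t^{-1} \ll \aA$, and the rest of the proof is exactly the parameter-chasing already exhibited in Lemma~\ref{typ:split}.
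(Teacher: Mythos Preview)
Your proposal is correct and follows essentially the same approach as the paper: embed the one-factors one at a time, invoking the blow-up lemma of Koml\'os, S\'ark\"ozy and Szemer\'edi at each step, and observe that deleting a regular digraph of degree at most $\eps n$ from an $(\eps,t)$-typical digraph leaves a $(\sqrt{\eps},t)$-typical digraph. The paper's own proof is in fact even terser than your outline---it simply asserts that the embedding is ``a standard argument (which we omit)'' via the blow-up lemma and that typicality is preserved---so your more explicit bookkeeping (tracking the change in common neighbourhoods by $O(t)$ per step and the density shift by $O(\eps)$) and your sketch of an elementary alternative with a buffer set go beyond what the paper writes out, but are entirely in the same spirit.
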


\begin{proof}
We embed the one-factors one by one. At each step,
the remaining graph $G'$ is obtained from $G$ by deleting
a graph that is regular of degree at most $2\eps n$,
so is $(\sqrt{\eps},t)$-typical. It is a standard argument
(which we omit) using the blow-up lemma 
of Koml\'os, S\'ark\"ozy and Szemer\'edi \cite{KSS} 
to show that any one-factor can be embedded in $G'$,
so the process can be completed.
\end{proof}

Now we prove Theorem \ref{main}
assuming that it holds in the above cases.
We introduce new parameters $\aA_1, \aA_2, M_1', M_1, M_2, M_3$ 
with $\eps \ll t^{-1} \ll M_3^{-1} \ll \aA_2 \ll M_2^{-1} 
\ll \aA_1 \ll (M_1')^{-1} \ll M_1^{-1} \ll \aA$.
For $\ell \in [3,M_2]$ let $\mc{F}_\ell$ consist of
all factors $F \in \mc{F}$ such that
$F$ has $\ge M_2^{-3} n$ cycles of length $\ell$
but $< M_2^{-3} n$ cycles of each smaller length.
Let $\mc{F}_2$ consist of all remaining factors in $\mc{F}$.
Note that each $F \in \mc{F}_2$ has fewer than $n/M_2$
vertices in cycles of length less than $M_2$,
so at least $(M_2-1)n/M_2$ in cycles of length at least $M_2$.
Let $B$ be the set of $\ell \in [3,M_2]$
such that $|\mc{F}_\ell| < \aA_2 n$.
Then for $\ell \in I' := [3,M_2] \sm B$ we have
$\bB_\ell := n^{-1} |\mc{F}_\ell| \ge \aA_2$. Also, 
writing $\mc{F}_B = \bigcup_{\ell \in B} \mc{F}_\ell$,
we have $\bB_B := n^{-1} |\mc{F}_B| < M_2\aA_2 < \sqrt{\aA_2}$.

Let $\mc{F}_1$ be the set of $F$ in $\mc{F}$ with
at least $n/2$ vertices in cycles of length $>M_1$.
We first consider the case 
$\eta := n^{-1}|\mc{F}_1| \ge \aA/2$.
Let $B^1 = B \cap [3,M_1]$,
$\mc{F}_{B^1} = \bigcup_{\ell \in B^1} \mc{F}_\ell$,
and $\bB_{B^1} := n^{-1} |\mc{F}_{B^1}| < \bB_B < \sqrt{\aA_2}$.
We apply Lemma \ref{typ:split} with
$I = (I' \cap [3,M_1]) \cup \{1\}$, letting
$\aA_\ell = \bB_\ell$ for all $\ell \in I' \cap [3,M_1]$
and $\aA_1 = \eta + \bB_{B^1}$, thus decomposing $G$ into 
$(2\eps,t)$-typical $\aA_i n$-regular digraphs $G_i$ on $V(G)$.
For each $\ell \in I' \cap [3,M_1]$ we decompose $G_\ell$ into
$\mc{F}_\ell$ by Case $\ell$ of Theorem \ref{main}, 
where in place of the parameters
$n^{-1} \ll \eps \ll t^{-1} \ll K^{-1} \ll \aA$ we use
$n^{-1} \ll 2\eps \ll t^{-1} \ll M_3^{-1} \ll \aA_2$.
For $G_1$, we first embed $\mc{F}_{B^1}$ via
Lemma \ref{typ:greedy}, leaving an $\eta n$-regular 
digraph $G'_1$ that is $(\eps',t)$-typical 
with $\aA_2 \ll \eps' \ll t{}^{-1} \ll M_2^{-1}$.
We then conclude the proof of this case by decomposing $G'_1$
into $\mc{F}_1$ by Case $K$ of Theorem \ref{main},
where in place of the parameters
$n^{-1} \ll \eps \ll t^{-1} \ll K^{-1} \ll \aA$ we use
$n^{-1} \ll \eps' \ll t{}^{-1} \ll M_1^{-1} \ll \eta$.
 
It remains to consider the case $\eta < \aA/2$.
Here there are at least $\aA n/2$ factors $F \in \mc{F}$
with at least $n/2$ vertices in cycles of length $\le M_1$,
so we can fix $\ell^* \in [M_1] \cap I'$ 
with $\bB_{\ell^*} > \aA/2M_1$. 
We consider two subcases according to $\bB_2 := n^{-1}|\mc{F}_2|$.

Suppose first that $\bB_2 < \aA_1 n$.
We apply Lemma \ref{typ:split} with $I = I'$, letting
$\aA_\ell = \bB_\ell$ for all $\ell \in I \sm \{\ell^*\}$
and $\aA_{\ell^*} = \bB_{\ell^*} + \bB_{B^1} + \bB_2$.
For each $\ell \in I \sm \{\ell^*\}$ we decompose $G_\ell$
into $\mc{F}_\ell$ by Case $\ell$ of Theorem \ref{main},
where (as before) in place of the parameters
$n^{-1} \ll \eps \ll t^{-1} \ll K^{-1} \ll \aA$ we use
$n^{-1} \ll 2\eps \ll t^{-1} \ll M_3^{-1} \ll \aA_2$.
For $G_{\ell^*}$ we first embed $\mc{F}_B \cup \mc{F}_2$
by Lemma \ref{typ:greedy}, leaving a $\bB_{\ell^*} n$-regular 
digraph $G'_{\ell^*}$ that is $(\eps',t)$-typical 
with $\aA_1 \ll \eps' \ll t{}^{-1} \ll M_1^{-1}$.
We then complete the decomposition by decomposing $G'_{\ell^*}$
into $\mc{F}_{\ell^*}$ by Case $\ell^*$ of Theorem \ref{main},
where in place of the parameters
$n^{-1} \ll \eps \ll t^{-1} \ll K^{-1} \ll \aA$ we use
$n^{-1} \ll \eps' \ll t{}^{-1} \ll (M_1')^{-1} \ll \bB_{\ell^*}$.

It remains to consider the subcase $\bB_2 \ge \aA_1 n$.
We apply Lemma \ref{typ:split} with $I = I' \cup \{2\}$, letting
$\aA_\ell = \bB_\ell$ for all $\ell \in I \sm \{\ell^*\}$
and $\aA_{\ell^*} = \bB_{\ell^*} + \bB_{B^1}$.
The same argument as in the first subcase applies to 
decompose $G_\ell$ into $\mc{F}_\ell$
for all $\ell \in I' \sm \{\ell^*\}$,
and also to embed $\mc{F}_B$ in $G_{\ell^*}$ 
by Lemma \ref{typ:greedy} and decompose 
the leave $G'_{\ell^*}$ into $\mc{F}_{\ell^*}$.
We complete the proof of this case, 
and so of the entire reduction, by decomposing 
$G_2$ into $\mc{F}_2$ by Case $K$ of Theorem \ref{main},
where in place of  the parameters
$n^{-1} \ll \eps \ll t^{-1} \ll K^{-1} \ll \aA$ we use
$n^{-1} \ll 2\eps \ll t^{-1} \ll M_2^{-1} \ll \bB_2$.

\subsection{Proof of Theorem \ref{main}}

We are now ready to prove our main theorem.
We are given an $(\eps,t)$-typical $\aA n$-regular digraph $G$ 
on $n$ vertices, where $n^{-1} \ll \eps \ll t^{-1} \ll \aA$,
and we need to decompose $G$ into some given
family $\mc{F}$ of $\aA n$ oriented one-factors on $n$ vertices.
By the reductions in section \ref{sec:red}, 
we can assume that we are in one of the 
following cases with $t^{-1} \ll M^{-1} \ll \aA$:

Case $K$: each $F \in \mc{F}$ has at least $n/2$ vertices 
in cycles of length at least $M$,  

Case $\ell^*$ with $\ell^* \in [3,M-1]$: each $F \in \mc{F}$ 
has $\ge M^{-3} n$ cycles of length $\ell^*$.

Here the parameters of section \ref{sec:red} are renamed:
$\ell$ is now $\ell^*$ so that 
`$\ell$' is free to denote generic cycle lengths;
$K$ is now $M$, as we want $K$ to take different 
values in each case: we introduce $M'$ with 
$t^{-1} \ll M'{}^{-1} \ll M^{-1}$ and define
\[ K = \left\{ \begin{array}{ll}
 M  & \text{ in Case } K, \\
 M' & \text{ in Case } \ell^*.
 \end{array} \right. \]
We define a parameter $L$ by $L=M$ in Case $\ell^*$
(so $\ell^* \ll L \ll K$), or as a new parameter
with $K^{-1} \ll L^{-1} \ll \aA$ in Case $K$.
We use these parameters to apply the algorithm 
of section \ref{sec:alg} as in~(\ref{hierarchy}), so we can apply the 
conclusions of the lemmas in 
sections \ref{sec:int} to \ref{sec:exact}.

We recall that each factor $F_w$
is partitioned as $F^1_w \cup F^2_w$,
where $F^1_w$ either consists of exactly
$L^{-3} n$ cycles of length $\ell^*$ in Case $\ell^*$,
or in Case $K$ we have $|F^1_w| - n/2 \in [0,2K]$
and $F^1_w$ consists of cycles of length $\ge K$
and at most one path of length $\ge K$
(and then $F^2_w = F_w \sm F^1_w$).

By Lemma \ref{lem:approx},
there are arc-disjoint digraphs $G^2_w \sub G_2$ for $w \in W$,
where each $G^2_w$ is a copy of some valid $F'_w \sub F^2_w$
with $V(G^2_w) \sub N^-_{J_2}(w)$, such that 
\begin{enumerate}
\item $G_2^- = G_2 \sm \bigcup_{w \in W} G^2_w$
has maximum degree at most $5d^{-1/3}n$,
\item the digraph $J_2^-$
obtained from $J_2[V,W]$ by deleting all $\ova{xw}$ 
with $x \in V(G^2_w)$ has maximum degree at most $5d^{-1/3}n$,
\item any $x \in V$ has degree $1$ in $F'_w$
for at most $n/\sqrt{d}$ choices of $w$.
\end{enumerate}
(Recall that `valid' means that $F'_w$
does not have any independent arcs, 
and if $F^2_w$ contains a path then $F'_w$
contains the arcs incident to each of its ends.)

Note that (ii) implies for each $w \in W$ that
$|F'_w| \ge |N^-_{J_2}(w)| - 5d^{-1/3}n
> p^2_w n - 6d^{-1/3}n$ (by Lemma \ref{deg}),
so as $p^2_w n = (1-\eta)|F^2_w| + n^{.8}$
we have $|F^2_w \sm F'_w| < \eta n$.

Next we will embed oriented graphs 
$R_w = (F^2_w \sm F'_w) \cup L_w$ for $w \in W$,
where $L_w \sub F^1_w$ is defined as follows.
In Case $\ell^*$ we let each $L_w$ consist of 
$2\eta L^{-3} n$ cycles of length $\ell^*$.
In Case $K$ we partition each $F^1_w$ as $\mc{P}_w \cup L_w$,
where $\mc{P}_w$ is a valid vertex-disjoint union of paths, 
such that for each $[a,b] \in \mc{Y}^1_w$ we have 
an oriented path $P^{ab}_w$ in $\mc{P}_w$ of length $8d(a,b)$
(which we will embed as an $ab^+$-path).
To see that such a partition exists, we apply the same
argument as at the end of the proof of Lemma \ref{lem:approx}.
We consider a greedy algorithm, where at each step that we consider 
some path $P^{ab}_w$ we delete a path of length $8d(a,b)+4$
from $F^1_w$, which we allocate as $P^{ab}_w$ surrounded on both
sides of paths of length $2$ that we add to $L_w$.
As $|\mc{Y}^1_w| < n/2d_{2s+1} = (2s)^{2s}n/2d$ we thus
allocate $< (2s)^{2s}n/d$ edges to $L_w$.
Suppose for contradiction that the algorithm gets stuck,
trying to embed some path $P$ in some remainder $Q_w$.
Then all components of $Q_w$ have size $\le 8d+5$.
All components of $F^1_w$ have size $\ge K$,
so $|Q_w| \le (8d+5)|F^1_w|/K < 5dn/K$.
However, we also have 
$|Q_w| \ge |F^1_w|-|Y^1_w|-|L_w| \ge \eta n/3$, 
as $|F^1_w| \ge n/2$ and 
$|Y^1_w| = (1-\eta)n/2 \pm 2n^{3/4}$ by Lemma \ref{lem:int}.
This is a contradiction, so the algorithm finds a partition 
$F^1_w = \mc{P}_w \cup L_w$ with $\mc{P}_w$ valid.
We note that each $|R_w| < 2\eta n$.

Now we apply a greedy algorithm to construct arc-disjoint
embeddings $(\phi_w(R_w): w \in W)$ in $G_1$.
At each step we choose some $\phi_w(x) \in N^-_{J^1}(w)$
(which is disjoint from $G^2_w \sub N^-_{J_2}(w)$).
We require $\phi_w(x)$ to be an outneighbour of some 
previously embedded $\phi_w(x_1)$ or both an outneighbour of 
$\phi_w(x_1)$ and an inneighbour of $\phi_w(x_2)$
for some previously embedded images; 
the latter occurs when we finish a cycle or a path
(the image under $\phi_w$ of the ends of the paths in $R_w$ 
have already been prescribed:
they are either images of endpoints of paths in $F'_w$
or startpoints / successors of intervals in $\mc{Y}^1_w$).
We also require $\phi_w(x)$ to be distinct from all
previously embedded $\phi_w(x_1)$ and not to lie 
in the set $S$ of vertices that are already in the image 
of $\phi_{w'}$ for at least $\eta^{.9} n/2$ choices of $w'$. 
As $\eta^{.9} n |S|/2 \le \sum_{w \in W} |R_w| < 2\eta n^2$
we have $|S| < 4\eta^{.1} n$. 
To see that it is possible to choose $\phi_w(x)$, 
first note for any $v,v'$ in $V$ and $w \in W$ that 
$|N_{G_1}^+(v) \cap N_{G_1}^-(v') \cap  N^-_{J^1}(w)| > \aA^2 n/3$,
by Lemma \ref{deg}.iii and typicality of $G$.
At most $|R_w|+|S| < 5\eta^{.1} n$ choices of $\phi_w(x)$ 
are forbidden due to using $S$ or some previously embedded
$\phi_w(x_1)$. Also, by definition of $S$, we have used
at most $\eta^{.9} n$ arcs at each of $v$ and $v'$ for
other embeddings $\phi_{w'}$, so this forbids 
at most $2\eta^{.9} n$ choices of $\phi_w(x)$.
Thus the algorithm never gets stuck, so we can construct 
$(\phi_w(R_w): w \in W)$ as required.

Let $G'_1 = G \sm \bigcup_{w \in W} (G^2_w \cup R_w)$.
For each $w \in W$ let $Z_w$ be the set 
of vertices of in- and outdegree $1$ in $G^2_w \cup R_w$.
We claim that $G'_1$ and $Z_w$ satisfy Setting \ref{set}.
To see this, first note that by definition of $S$ above 
each $|N_{G_1}^\pm(x) \sm N_{G'_1}^\pm(x)| < \eta^{.9} n/2$.
As $d_{G_2^-}^\pm(x) < 5d^{-1/3}n$ by (i) above and
(by Lemma~\ref{deg}) $d_G^\pm(x)-d_{G_1}^\pm(x)-d_{G_2}^\pm(x) 
< (1-p_1-p_2)d_G^\pm(x) + n^{.6} < 2\eta n$
we have $|N_{G_1}^\pm(x) \sd N_{G'_1}^\pm(x)| < \eta^{.9} n$,
so $G'_1$ is an $\eta^{.9}$-perturbation of $G_1$.
Also, as $|N^-_{J_2}(w) \sm F'_w| \le 5d^{-1/3}n$,
$|R_w| < 2\eta n$ and $|V \sm N^-_J(w)| < 2\eta n$
(the last by Lemma \ref{deg}) we have
$|Z_w \sd (V \sm N^-_{J^1}(w))| < 5\eta n$, as claimed.

In Case $\ell^*$, every vertex 
has equal in- and outdegrees $0$ or $1$ in $G^2_w \cup R_w$ 
(it is a vertex-disjoint union of cycles) so 
$d_{G'_1}^\pm(x)=|W|-|Z(x)|$ for all $x \in V$
and $\ell^*$ divides $n-|Z_w|$ for all $w \in W$.
Thus Lemma \ref{exactL} applies to partition $G'_1$ 
into graphs $(G^1_w: w \in W)$, where each $G^1_w$ 
is a $C_{\ell^*}$-factor with $V(G^1_w) = V \sm Z_w$,
thus completing the proof of this case.

In Case $K$, a vertex $x$ has indegree (respectively outdegree) 
$1$ in $G^2_w \cup R_w$
exactly when $x \in (Y^1_w)^-$ 
(respectively $(Y^1_w)^+$),
for which there are each $t_1$ choices of $w$,
so $d_{G'_1}^\pm(x)=|W|-t_1-|Z(x)|$ for all $x \in V$.
By construction, $Z_w$ is disjoint from $(Y^1_w)^- \cup (Y^1_w)^+$,
and the total length of paths required in the remaining path 
factor problem satisfies
$8|Y^1_w| = n-|Z_w|-|(Y^1_w)^+|$ for all $w \in W$.
Thus Lemma \ref{exactK} applies to partition $G'_1$ 
into graphs $(G^1_w: w \in W)$,
such that each $G^1_w$ is a vertex-disjoint union of oriented paths
with $V(G^1_w) = V\setminus Z_w$,
where for each $[a,b] \in \mc{Y}^1_w$ there
is an $ab^+$-path of length $8d(a,b)$.
This completes the proof of this case,
and so of Theorem \ref{main}.

\section{Concluding remarks} \label{sec:con}

As mentioned in the introduction,
our solution to the generalised Oberwolfach Problem
is more general than the result of \cite{GJKKO}
in three respects: it applies to any typical graph
(theirs is for almost complete graphs)
and to any collection of two-factors
(they need some fixed $F$ to occur $\OO(n)$ times),
and it applies also to directed graphs.
Although there are some common elements 
in both of our approaches (using \cite{K2} 
for the exact step and some form of twisting),
the more general nature of our result 
reflects a greater flexibility in our approach
that has further applications.
One such application is our recent proof \cite{KSringel}
that every quasirandom graph with
$n$ vertices and $rn$ edges can be
decomposed into $n$ copies of any fixed
tree with $r$ edges.
The case of the complete graph solves 
Ringel's tree-packing conjecture~\cite{ringel} 
(solved independently via different
methods by Montgomery, Pokrovskiy 
and Sudakov~\cite{MPS3}).

A natural open problem raised in \cite{GJKKO}
is whether the generalised Oberwolfach problem
can be further generalised to decompositions of $K_n$
into any family of regular graphs of bounded degree
(where the total of the degrees is $n-1$).

\end{document}